\documentclass[11pt,a4paper,reqno]{amsart}
\usepackage[foot]{amsaddr}
\usepackage[utf8]{inputenc}
\usepackage[left=1in,right=1in,top=1in,bottom=1in]{geometry}

\usepackage{bbm}
\usepackage{amssymb}
\usepackage{amsmath}
\usepackage{mathtools}
\usepackage{mathdots}
\usepackage{array,multirow,graphicx}
\usepackage{quiver}
\usepackage{float}

\allowdisplaybreaks

\usepackage{amsthm}
\usepackage{enumitem}
\usepackage[hidelinks,unicode,naturalnames,hypertexnames=false]{hyperref}
\usepackage{thmtools}
\usepackage[english,noabbrev,sort,nameinlink,capitalize]{cleveref}


\newcounter{mycounter}[section]

\theoremstyle{plain}
\newtheorem{theorem}[mycounter]{Theorem}
\newtheorem{proposition}[mycounter]{Proposition}
\newtheorem{lemma}[mycounter]{Lemma}
\newtheorem{corollary}[mycounter]{Corollary}

\theoremstyle{definition}
\newtheorem{definition}[mycounter]{Definition}

\theoremstyle{remark}
\newtheorem{remark}[mycounter]{Remark}
\newtheorem{example}[mycounter]{Example}

\numberwithin{equation}{section}
\crefname{subsection}{subsection}{subsections}

\setlist[itemize]{noitemsep, topsep=0pt, leftmargin=1cm}

\newlist{facts}{enumerate}{1}
\setlist[facts, 1]{label=\normalfont\bfseries(\Roman*)}
\newlist{enumalpha}{enumerate}{1}
\setlist[enumalpha, 1]{label=(\alph*)}
\newlist{enumroman}{enumerate}{1}
\setlist[enumroman, 1]{label=(\roman*)}

\newcommand{\leftl}{\mathopen{}\mathclose\bgroup\left}
\newcommand{\rightr}{\aftergroup\egroup\right}
\renewcommand{\bar}{\overline}
\renewcommand{\tilde}{\widetilde}
\renewcommand{\hat}{\widehat}

\renewcommand{\text}{\textnormal}

\DeclareFontFamily{U}{wncy}{}
\DeclareFontShape{U}{wncy}{m}{n}{<->wncyr10}{}
\DeclareSymbolFont{mcy}{U}{wncy}{m}{n}
\DeclareMathSymbol{\Sh}{\mathord}{mcy}{"58}

\newcommand{\wps}[1]{\wp_{#1}}
\newcommand{\wpa}{\wps a}

\newcommand{\bigslant}[2]{{\raisebox{.2em}{$#1$}\left/\raisebox{-.2em}{$#2$}\right.}}

\newcommand{\F}{\mathbb F}

\newcommand{\Z}{\mathbb Z}
\newcommand{\Q}{\mathbb Q}
\newcommand{\R}{\mathbb R}

\newcommand{\cF}{\mathcal F}
\newcommand{\cG}{\mathcal G}
\newcommand{\cI}{\mathcal I}
\newcommand{\cO}{\mathcal O}

\newcommand{\bG}{\mathbb G}
\newcommand{\bP}{\mathbb P}

\newcommand{\fm}{\mathfrak m}

\newcommand{\fS}{\mathfrak S}

\newcommand{\bwp}{\boldsymbol{\wp}}

\newcommand{\dd}{\,\mathrm{d}}

\newcommand{\ab}{\mathrm{ab}}

\newcommand{\ur}{\mathrm{ur}}
\newcommand{\sep}{\mathrm{sep}}

\newcommand{\rel}{\mathrm{rel}}
\newcommand{\surj}{\mathrm{surj}}

\DeclareMathOperator{\Aut}{Aut}

\DeclareMathOperator{\Hom}{Hom}
\DeclareMathOperator{\im}{im}
\DeclareMathOperator{\Gal}{Gal}
\DeclareMathOperator{\GL}{GL}

\DeclareMathOperator{\Stab}{Stab}

\DeclareMathOperator{\lastjump}{lastjump}

\newcommand{\andd}{\quad\quad\text{and}\quad\quad}

\newcommand{\card}[1]{\leftl\lvert#1\rightr\rvert}
\newcommand{\suchthat}[2]{\left\{#1\ \middle\vert\ \begin{matrix}#2\end{matrix}\right\}}
\newcommand{\cardsuchthat}[2]{\card{\suchthat{#1}{#2}}}
\newcommand{\simto}{\overset\sim\to}
\newcommand{\qqquad}{\qquad\quad}
\newcommand{\ppar}[1]{(\!(#1)\!)}
\newcommand{\bbra}[1]{[\![#1]\!]}

\newcommand{\floor}[1]{\leftl\lfloor#1\rightr\rfloor}
\newcommand{\ceil}[1]{\leftl\lceil#1\rightr\rceil}

\DeclareMathOperator{\inv}{inv}

\newcommand{\customref}[2]{\hyperref[#2]{#1~\ref*{#2}}}
\newcommand{\iref}[2]{\customref{\Cref*{#1}}{#2}}

\title[Twisted Artin--Schreier theory I]{Twisted Artin--Schreier theory I: \\ $(C_p \rtimes C_{p-1})$-extensions of local function fields}
\subjclass{11S15, 11R45, 11S25}

\author{Béranger Seguin}
\email{math@beranger-seguin.fr}
\address{Universität Paderborn, Fakultät EIM, Institut für Mathematik, Warburger Str.~100, 33098 Paderborn, Germany.}

\begin{document}

\begin{abstract}
  Over any field~$K$ of characteristic~$p$, we parametrize the $(C_p\rtimes C_{p-1})$-extensions of~$K$ using pairs of elements~$(g,a) \in K \times K^\times$, via a ``twisted'' variant of Artin--Schreier theory.
  Focusing on the case of the local function field~$K := \F_q\ppar\pi$ over a finite field~$\F_q$ of characteristic~$p$, we use this parametrization to obtain an exact formula for the count of all $(C_p\rtimes C_{p-1})$-extensions of~$K$ with a given discriminant.
\end{abstract}

\maketitle

{
  \setcounter{tocdepth}{1}
  \hypersetup{linkcolor=black}
  \tableofcontents{}
}

In the whole paper, we fix a prime number~$p$.
The cyclic group of order~$n$ is denoted by $C_n := \Z/n\Z$.

\section{Introduction}

The cyclic group~$C_{p-1} \simeq \F_p^\times$ acts on the cyclic group~$C_p \simeq \F_p$ by scalar multiplication.
Let~$C_p \rtimes C_{p-1}$ be the semi-direct product associated to this action (the one-dimensional affine group over~$\F_p$), and let $\pi \colon C_p \rtimes C_{p-1} \twoheadrightarrow C_{p-1}$ be the canonical surjection.
In this paper, we are interested in the description of all $(C_p \rtimes C_{p-1})$-extensions of a fixed field~$K$ of characteristic~$p$, and specifically in the distribution of their discriminants when~$K$ is a local function field over a finite field of characteristic~$p$.
For us, the word ``extension'' is understood in a broad sense, cf.~\Cref{def:extension}, but it in particular includes Galois field extensions having~$C_p \rtimes C_{p-1}$ as their Galois group.

\subsection{Context and previous work}

For a fixed finite group~$G$ and a fixed field~$K$, the problem of counting $G$-extensions of~$K$ ordered by discriminant has a long history:
\begin{itemize}
  \item
    When~$K$ is either a number field or a global function field and~$G$ is an abelian group whose order is not divisible by the characteristic of~$K$, this problem was solved using class field theoretic methods in \cite{maki,wright,wood-probabilities-of-local-behaviors}.
  \item
    The case of non-abelian groups~$G$ over a number field~$K$ is the subject of a well-studied conjecture of Malle, see for example \cite{malle1,inductive,loughransantens}.
  \item
    When~$K = \F_q(T)$ is a rational function field over a finite field whose characteristic does not divide~$\card G$ (the ``tame case''), an approach via the cohomology of moduli spaces of tamely ramified covers of~$\bP^1$ (Hurwitz spaces) has culminated in an essentially complete solution for large~$q$, cf.~\cite{etw,landesmanlevy,santenshur}.
    This approach also yields results on the distribution of the coprime-to-$p$ part of class groups of function fields of characteristic~$p$, cf.~\cite{evw,landesmanlevy2}, \cite[Theorem~1.3.2]{landesmanlevy}, and \cite{woodsawin}.
  \item
    When~$K$ is a (local or global) function field of characteristic~$p$ and~$G$ is a $p$-group (the ``purely wild case''), explicit parametrizations based on generalizations of Artin--Schreier theory together with class field theoretic tools have led to recent progress, cf.~\cite{lagemann1,lagemann2,potthast,gundab,wildcount,wildlocglob}.
\end{itemize}

\medskip

The motivation behind this paper is to study an example in the ``missing case'' over function fields of characteristic~$p$, in which both tame and wild ramification can occur, i.e., the case where~$G$ is neither a $p$-group nor a group of order coprime to~$p$.
The group~$C_p \rtimes C_{p-1}$ is still nice in several ways: it has a unique $p$-Sylow subgroup (a property that it shares with both the purely wild and the tame case), and it is solvable.
It is therefore a natural first step in the direction of general groups.

\medskip

Although this paper discusses local function fields, asymptotic results over global (in fact rational) function fields will follow in a sequel paper,%
\footnote{
  That is, if no AI--fanatic scoops me to it.
}
using much of the results obtained here.

\medskip

Previous work about similar local counting questions includes \cite[Theorems~3.17 and 3.18]{muller-thesis}, \cite[Theorem~1.1]{klumul-frob}, and \cite[Theorem~3.2]{tavyas}.
Klüners and Müller count by discriminant and give asymptotic counts, whereas Tavernier and Yasuda count by ramification jump and give exact counts.
In this paper, we give exact counts for the $(C_p \rtimes C_{p-1})$-extensions of local function fields of characteristic~$p$ when counting by discriminant (\Cref{thm:intro:local-total-count}).

\medskip

It should also be mentioned that moduli spaces of wildly ramified covers of curves with similar Galois groups have been studied, notably in \cite{obuspries,zhang,modform} and in \cite[Subsection~3.2]{tavyas}.
The geometric description of these moduli spaces could be relevant for counting questions, in the spirit of the geometric approach to the tame case from \cite{etw,landesmanlevy}.

\subsection{Twisted Artin--Schreier theory.}
In \Cref{thm:main-param}, we describe a bijective parametrization of all the $(C_p \rtimes C_{p-1})$-extensions of any field~$K$ of characteristic~$p$.
Compared to inductive approachs in which one first constructs a $C_{p-1}$-extension~$K(\alpha)|K$ and then uses Artin--Schreier theory over~$K(\alpha)$ to parametrize the $C_p$-extensions of~$K(\alpha)$ (cf.~\cite{klumul-frob}), this direct description shortcuts the need to determine which extensions have the desired Galois group.
Moreover, extensions are directly parametrized by elements of the ground field~$K$.

By Kummer theory, any $C_{p-1}$-extension of~$K$ is induced by some element~$a \in K^\times$, cf.~\Cref{subsn:kummer}.
Once such an element~$a \in K^\times$ is fixed, our parametrization of the $(C_p \rtimes C_{p-1})$-extensions lying above the $C_{p-1}$-extension associated to~$a$ relies on a twisted variant of Artin--Schreier theory, in which the Artin--Schreier map $\wp \colon x \mapsto x^p - x$ is replaced by the ($\F_p$-linear) \emph{twisted Artin--Schreier map}
\[
  \wpa \colon x \longmapsto x^p - ax.
\]
A large portion of this article is dedicated to the study of the map~$\wpa$, of its image, and of the quotient~$K/\wpa(K)$ when~$K$ is a local function field~$\kappa\ppar\pi$ with finite (or at least perfect) residue field~$\kappa$ (\Cref{sn:local,sn:local-count}).

The importance of the quotient $\F_p$-vector space~$K/\wpa(K)$ is that it parametrizes the $(C_p \rtimes C_{p-1})$-extensions of~$K$ lying above the $C_{p-1}$-extension of~$K$ associated to~$a$.
Specifically, if~$a \in K^\times$ and~$g \in K$ are fixed, if $\alpha \in (K^\sep)^\times$ is a $(p-1)$-th root of~$a$, and if $\gamma \in \wpa^{-1}(g)$ (i.e., if~$\gamma \in K^\sep$ is a solution to the equation $\gamma^p - a\gamma = g$), then we obtain a continuous group homomorphism
\[
  \begin{matrix}
    \rho \colon &
    \Gamma_K &
    \longrightarrow &
    C_p \rtimes C_{p-1} \\
    &
    \tau &
    \longmapsto &
    \Bigl(
      \frac{\tau(\gamma)-\gamma}\alpha, \;
      \frac{\tau(\alpha)}\alpha
    \Bigr)
  \end{matrix}
\]
and all continuous group homomorphisms~$\rho \in \Hom(\Gamma_K, C_p \rtimes C_{p-1})$ are obtained in this way for some choice of $a$, $g$, $\alpha$ and~$\gamma$.
In \Cref{thm:main-param} and in \Cref{subsn:fiberwise}, we describe the exact conditions under which different choices of~$a$ and~$g$ (and then of~$\alpha$ and~$\gamma$) lead to the same~$\rho$ (either exactly, or up to conjugacy).

\medskip

Equivalently, letting~$\chi_a \in \Hom(\Gamma_K, \F_p^\times)$ be the Kummer character associated to~$a \in K^\times$ (\Cref{def:kummer-character}), the elements of~$K/\wpa(K)$ correspond to the elements of the cohomology group~$H^1(\Gamma_K, \F_p)$ for the non-trivial action~$\tau.\mu = \chi_a(\tau)\mu$ of $\Gamma_K$ on~$\F_p$, cf.~\Cref{eq:twisted-artsch}.
Hence, the problem of describing the asymptotic distribution of these cohomology classes fits into the landscape of the ``twisted Malle conjecture'' for the first Galois cohomology group of number fields introduced by Alberts in \cite{alberts} (see also \cite{alberts2,inductive}).
What we are studying is a function field analogue of that problem, in a situation with wild ramification.

\subsection{Main result}
When~$K$ is a local function field, for each $a \in K^\times$, we describe in \Cref{thm:local-fundom} distinguished choices of representatives in~$K$ of each coset in~$K/\wpa(K)$, analogous to the fundamental domain used in ordinary Artin--Schreier theory, cf.~\cite[Proposition~5.2(b)]{potthast}.
When an element~$g \in K$ is such a distinguished representative of its coset (it is ``reduced''), the discriminant of the $(C_p \rtimes C_{p-1})$-extension of~$K$ associated to the pair~$(g,a)$ is computed in \Cref{thm:local-disc}.
By counting distinguished representatives, we are then able to give exact counts for the number of $(C_p \rtimes C_{p-1})$-extensions of the local function field $K = \F_q\ppar\pi$, ordered by their discriminant exponent~$\delta$ (defined in \Cref{def:disc}):

\begin{theorem}[\Cref{thm:local-total-count}]
  \label{thm:intro:local-total-count}
  Let $K = \F_q\ppar\pi$.
  For any $x \in \R_{\geq0}$ and any $u \in \{0,\ldots,p-2\}$, define the following numbers (cf.~\Cref{def:eta-Nux,lem:Nu})
  \begin{align*}
    \eta_u(x)
    & :=
    \frac{x - p(p-1) + \gcd(p-1, \, u)}{(p-1)^2}
    - \frac {pu}{p-1}
    \\
    N_u(x)
    & :=
    \begin{cases}
      - \infty & \text{if } x < p(p-1) - p \gcd(p-1, \, u) \\
      0 & \text{if } p(p-1) - p\gcd(p-1, \, u) \leq x \leq p(p-1)-\gcd(p-1, \, u) \\
      u+\floor{\eta_u(x)}-\floor{\frac{\eta_u(x)}p} & \text{if } x \geq p(p-1)-\gcd(p-1, \, u).
    \end{cases}
  \end{align*}
  Then, for any $x \in \R_{\geq0}$, we have the total count
  \[
    \frac{1}{p(p-1)}
    \cardsuchthat{
      \rho \in \Hom(\Gamma_K, C_p \rtimes C_{p-1})
    }{
      \delta(\rho) \leq x
    }
    =
    \sum_{u = 0}^{p-2}\,
      q^{N_u(x)}
  \]
  and, for $p \neq 2$, we have the following counts for surjective (resp.~dihedral) extensions:
  \begin{align*}
    \frac{1}{p(p-1)}
    \cardsuchthat{
      \rho \in \Hom^\surj(\Gamma_K, C_p \rtimes C_{p-1})
    }{
      \delta(\rho) \leq x
    }
    & =
    \sum_{u = 0}^{p-2}\,
      \frac{\varphi(\gcd(u,p-1))}{\gcd(u,p-1)}
      \Bigl(
        q^{N_u(x)}
        - 1
      \Bigr)
    \\
    \frac{1}{2p}
    \cardsuchthat{
      \rho \in \Hom^\surj(\Gamma_K, D_p)
    }{
      \delta(\rho) \leq x
    }
    & =
    \frac12
    q^{N_0(\frac{p-1}2x)}
    +
    q^{N_{\frac{p-1}2}(\frac{p-1}2x)}
    -\frac32
  \end{align*}
  where~$\varphi$ denotes Euler's totient function.
\end{theorem}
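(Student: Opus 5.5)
We would organize the count fiberwise over the tame quotient. Every $\rho \in \Hom(\Gamma_K, C_p\rtimes C_{p-1})$ has a composite $\pi\circ\rho$, which is a Kummer character $\chi_a$ for some $a \in K^\times/(K^\times)^{p-1}$. Since $K=\F_q\ppar\pi$, we may take $a = c\pi^{u'}$ up to $(p-1)$-th powers, with $v(a) \bmod (p-1)$ and a residue class $c \in \F_q^\times/(\F_q^\times)^{p-1}$ determining the class. By \Cref{thm:main-param} and the fiberwise analysis of \Cref{subsn:fiberwise}, the homomorphisms lying above $\chi_a$ are in bijection with $K/\wpa(K)$, up to a uniform factor. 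Dividing by $p(p-1)$ should absorb both the choice of $(\alpha,\gamma)$ and the count of Kummer classes with a given valuation, leaving one summand per $u\in\{0,\dots,p-2\}$. Here $u$ is the valuation parameter, normalized as in \Cref{def:eta-Nux} so that $\gcd(p-1,u)$ measures the tame part: the inertia image has order $(p-1)/\gcd(p-1,u)$. The first thing to verify is this bookkeeping, in particular that the residue-class twist $c$ either does not change the count or contributes exactly the normalizing factor.

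Within a fixed fiber, we would use \Cref{thm:local-fundom}, which gives reduced representatives of $K/\wpa(K)$. These are $\F_q$-linear combinations of monomials $\pi^{-n}$ with $n$ in an explicit set (excluding $n$ with the appropriate $p$-divisibility), possibly together with one extra constant or residue term. \Cref{thm:local-disc} gives $\delta$ for reduced $g$ as an affine function of $-v(g)$ plus the tame contribution $p(p-1)-p\gcd(p-1,u)$ or similar. The counting step is then as follows. The set of reduced $g$ with $\delta(g,a)\le x$ is an $\F_q$-subspace spanned by the allowed monomials with $n$ bounded by a linear function of $x$, which is essentially $\eta_u(x)$. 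Its dimension is the number of allowed exponents, giving $u+\floor{\eta_u(x)}-\floor{\eta_u(x)/p}$. The $-\floor{\eta_u/p}$ term removes the exponents excluded by the $p$-th power reduction, and the offset $u$ accounts for the small exponents that are always present. We expect to confirm this through \Cref{lem:Nu}. The intermediate range of $x$, where $N_u=0$, corresponds to $g=0$ only (the split fiber), and $-\infty$ corresponds to $x$ being below even the tame discriminant. Summing $q^{N_u(x)}$ over $u$ then gives the total count.

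For the surjective count, we would subtract non-surjective $\rho$. For $p\neq2$, a subgroup not containing $C_p$ maps injectively into $C_{p-1}$, so it is conjugate into $C_{p-1}$. Such a $\rho$ therefore corresponds to $g\in\wpa(K)$, that is, to the zero class, giving the $-1$. In addition, $\chi_a$ must be surjective. Among the Kummer classes in the fiber labelled $u$, the proportion with surjective $\chi_a$ is $\varphi(\gcd(u,p-1))/\gcd(u,p-1)$, since the order of $\chi_a$ is determined by $u$ together with the residue class $c$. We would check that the count of lifts to $C_p$ is independent of $c$, so that this factor simply multiplies $q^{N_u(x)}-1$.

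For $D_p = C_p\rtimes C_2$, we would embed $D_p\subset C_p\rtimes C_{p-1}$ and restrict to $a$ whose Kummer character has image of order $2$, meaning $a$ is a square class. The discriminant of the $D_p$-extension differs by the factor $(p-1)/2$, hence the argument $\tfrac{p-1}2x$. The contributing fibers are $u=0$ with a nonsquare unit (weight $\tfrac12$) and $u=\tfrac{p-1}2$ (two classes, with weight normalized to $1$), each minus the trivial class. Normalizing gives $\tfrac12 q^{N_0}+q^{N_{(p-1)/2}}-\tfrac32$. The main obstacle is the precise normalization and bookkeeping of Kummer classes versus $u$, together with matching the exponent set of \Cref{thm:local-fundom} to the formula for $N_u$.
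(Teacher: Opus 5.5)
Your outline is the paper's proof: fiber over $\chi_a$, index the Kummer classes by $u$ and one of $p-1$ residue classes, count points of the fundamental domain using \Cref{thm:local-disc} and \Cref{lem:Nu}, remove the zero class for surjections, and restrict to $u\in\{0,\frac{p-1}2\}$ with $x$ rescaled for $D_p$. However, the bookkeeping you defer is exactly where your stated mechanism is wrong. By \Cref{eq:fiberwise-param} and \Cref{eq:fiberwise-2}, the fiber over $\chi_a$ has $p\cdot\card{K/\wpa(K)}$ elements only when $a\notin(K^\times)^{p-1}$. When $\chi_a$ is trivial ($u=0$ and the residue class a $(p-1)$-th power), the fiber is $\Hom(\Gamma_K,C_p)$, which is in bijection with $K/\wpa(K)$ itself. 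The choice of $\gamma$ then contributes nothing, because $\alpha\F_p\subseteq K$. So ``bijection with $K/\wpa(K)$ up to a uniform factor'' is false.

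What is uniform is the count against $\cF'_a$. In the trivial fiber, \iref{thm:local-fundom}{thm:local-fundom:is-fundom} gives $K/\wpa(K)\simeq(\kappa/\wps A(\kappa))\pi^{pk}\oplus\cF'_a$, with $\card{\kappa/\wps A(\kappa)}=p$ by \Cref{eq:size-wp-finite}. This residue summand is unramified (\Cref{prop:local-unram}), so by \iref{thm:local-disc}{thm:local-disc:unramified-twist} it does not affect $\delta$. That is what produces $p\,q^{N_u(x)}$ in every fiber (\Cref{lem:count-hom-count-inertype}). In particular, when $N_0(x)=0$, the $p$ homomorphisms in the trivial fiber are the unramified characters $\Gamma_K\to C_p$. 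They come from the residue summand, not from split lifts over $g=0$ as you claim.

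For the surjective count, your conjugacy claim does not need $p\neq2$; it holds for every $p$. What $p\neq2$ actually guarantees is that a surjective $\chi_a$ is nontrivial, hence $a\notin(K^\times)^{p-1}$. Then the residue summand is absent, and the zero class of $\cF'_a$ accounts for exactly the $p$ non-surjective lifts (those obtained from $g=0$, $\gamma\in\alpha\F_p$). For trivial $\chi_a$ the zero class is a single homomorphism, and your ``$-1$'' would be wrong.

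Moreover, subtracting the zero class is only legitimate once it is counted, i.e.\ for $x\geq p\,\delta(\chi_a)$. Below that threshold the fiber is empty and contributes $0$, not $-p$, so $q^{N_u(x)}-1$ must be read as $0$ when $N_u(x)=-\infty$. The same caveat applies to the paper's argument: as written, both the surjective and the dihedral formulas give $-1$ at $x=0$ for $p=3$.

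Finally, the dihedral fibers are those with $\langle u,\chi_a(\tilde\sigma)\rangle=\langle\frac{p-1}2\rangle$, i.e.\ $a\in(K^\times)^{\frac{p-1}2}\setminus(K^\times)^{p-1}$, not ``$a$ a square class''. Your class counts (one with $u=0$, two with $u=\frac{p-1}2$) are nonetheless right.
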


We refer to \Cref{rk:count-hom} for an explanation of why we count extensions in this way.
See also \Cref{prop:local-fiberwise-count} for a fiberwise version of the local counting result.

A consequence of \Cref{thm:intro:local-total-count} is that for fixed~$p$ the number of $(C_p\rtimes C_{p-1})$-extensions of~$\F_q\ppar\pi$ with bounded discriminant is a polynomial with coefficients in $\Z_{\geq0}$ in the size~$q$ of the residue field.
The values of the exponents~$N_u(x)$ are discussed further in \Cref{subsn:behavior-nux}: the key takeaway is that~$N_u(x) \approx \frac{x}{p(p-1)}$.
The concrete example of $\fS_3$-extensions in characteristic~$3$ is detailed in \Cref{ex:s3-char3}, giving the following exact formula when~$q$ is a power of~$3$:
\[
  \frac{1}{6}
  \cardsuchthat{
    \rho \in \Hom(\Gamma_{\F_q\ppar\pi}, \fS_3)
  }{
    \delta(\rho) = x
  }
  =
  \begin{cases}
    1
    & \text{if } x = 0 \text{ or } x = 3
    \\
    0
    & \text{if } (x \bmod 12) \in \{1,2,4,5,6,9,10,11\}
    \\
    q^{\floor{\frac x6}} - 
    q^{\floor{\frac x6}-1}
    & \text{if }
    x \geq 6 \text{ and }
    (x \bmod 12) \in \{0,3,7,8\}.
  \end{cases}
\]

\subsection{Acknowledgements}

This work was supported by the Deutsche Forschungsgemeinschaft (DFG, German Research Foundation)---Project-ID 491392403---TRR 358 (Project A4).
I thank Jürgen Klüners and Fabian Gundlach for helpful discussions.

No AI was used at any point in the process of preparing this paper, neither for thinking nor for writing.
(This paper was mostly written in February 2026, where this would have been obvious. Eight months later, how rare this seems to have become!)
All mistakes (mathematical, grammatical, or otherwise) are fully mine, and hence purely human.

\section{Preliminaries}

\subsection{Definitions}

For any field~$K$, let~$K^\sep$ be a separable closure of~$K$, and let~$\Gamma_K := \Gal(K^\sep|K)$ be the absolute Galois group of~$K$ equipped with the Krull topology.
All finite groups are equipped with the discrete topology.
For any two topological groups~$\Gamma$, $G$, the notation~$\Hom(\Gamma, G)$ refers to the pointed set of \emph{continuous} group homomorphisms~$\Gamma \to G$, and~$\Hom^\surj(\Gamma,G)$ is the subset formed of surjective continuous group homomorphisms~$\Gamma \twoheadrightarrow G$.
Similarly, if~$\Gamma$ acts on~$G$ via group automorphisms, the pointed sets~$H^i(\Gamma, G)$ (if~$G$ is abelian or if~$i \in \{0,1\}$) are meant in the sense of \emph{continuous} group cohomology, and are abelian groups when~$G$ is abelian.


\subsubsection{Extensions}

Let~$G$ be a finite group, and let~$K$ be a field.
We use the following notion of extensions:

\begin{definition}
  \label{def:extension}
  A \emph{$G$-extension of~$K$} is an orbit~$[\rho]$ of continuous group homomorphisms~$\rho \in \Hom(\Gamma_K, G)$ under the conjugation action of~$G$, i.e., an element of the pointed set~$H^1(\Gamma_K, G)$  for the trivial $\Gamma_K$-action on~$G$.
\end{definition}

If~$G$ is abelian, $G$-extensions of~$K$ are simply continuous group homomorphisms from~$\Gamma_K$ (or from~$\Gamma_K^\ab$) to~$G$.

There is a correspondence between $G$-extensions of~$K$ and specific $\card G$-dimensional étale $K$-algebras with an action of~$G$, cf.~\cite[Lemma~2.1]{wildcount}.
Those include the Galois field extensions~$L|K$ with an isomorphism $\Gal(L|K) \simeq G$ (the case of surjective group homomorphisms~$\rho \in \Hom^\surj(\Gamma_K, G)$), but also the étale $K$-algebras~$L^{[G:H]}$ with $\Gal(L|K) \simeq H$ for a subgroup~$H \subseteq G$ (the case $\im\rho \simeq H$).
For instance, the trivial homomorphism corresponds to the étale $K$-algebra $K^{\card G}$ (the ring of maps~$G \to K$) on which~$G$ acts by permuting the factors.

\begin{remark}
  \label{rk:count-hom}
  Let~$\Sigma \subseteq H^1(\Gamma_K, G)$ be finite.
  The ``correct'' way to count $G$-extensions of~$K$ in~$\Sigma$ is to consider the \emph{groupoid cardinality}, i.e., to weight each extension~$[\rho]$ by the inverse of the size of its automorphism group (the automorphism group of~$[\rho]$ is the automorphism group of the corresponding étale $K$-algebra, which coincides with the subgroup of elements of~$G$ acting trivially on~$\rho$ by conjugation, i.e., the centralizer of~$\im\rho$ in~$G$, cf.~\cite[Lemma~2.1]{wildcount}).
  By the orbit--stabilizer theorem, we have
  \begin{align*}
    \sum_{[\rho] \in \Sigma}
      \frac1{\card{\Aut [\rho]}}
    =
    \frac1{\card G}
    \sum_{[\rho] \in \Sigma}
      \frac
        {\card G}
        {\card{\Stab_G(\rho)}}
    & =
    \frac1{\card G}
    \sum_{[\rho] \in \Sigma}
      \cardsuchthat
        {\rho' \in \Hom(\Gamma_K, G)}
        {[\rho']=[\rho]}
    \\ & =
    \frac1{\card G}
    \cardsuchthat
      {\rho \in \Hom(\Gamma_K, G)}
      {[\rho] \in \Sigma}.
  \end{align*}
\end{remark}

\subsubsection{Inertial types}

Let~$K$ be either a local or a global field, and let~$K^\ur \subseteq K^\sep$ be the maximal unramified extension of~$K$.
(If~$K$ is a local field, then~$\Gamma_{K^\ur}$ is the inertia subgroup of~$\Gamma_K$.)
Let~$G$ be a topological group on which~$\Gamma_K$ acts trivially.

\begin{definition}
  \label{def:unramified}
  A $G$-extension $[\rho] \in H^1(\Gamma_K, G)$ (resp.~a homomorphism $\rho \in \Hom(\Gamma_K, G)$) is \emph{unramified} if~$\rho(\Gamma_{K^\ur})=1$.
  We let~$H^1_\ur(\Gamma_K, G) \subseteq H^1(\Gamma_K, G)$ and~$\Hom_\ur(\Gamma_K, G) \subseteq \Hom(\Gamma_K, G)$ be the pointed subsets consisting of unramified extensions/homomorphisms.
\end{definition}

We have $H^1_\ur(\Gamma_K, G) = H^1(\Gal(K^\ur|K), G)$ and $\Hom_\ur(\Gamma_K, G) = \Hom(\Gal(K^\ur|K), G)$.

\begin{definition}
  \label{def:inertial-type}
  For any $G$-extension $[\rho] \in H^1(\Gamma_K, G)$ of~$K$, the \emph{inertial type of $[\rho]$} is the element $[\rho|_{\Gamma_{K^\ur}}] \in H^1(\Gamma_{K^\ur}, G)$ obtained via restriction to~$\Gamma_{K^\ur}$.
  We denote by~$\cI(K, G)$ the pointed subset of~$H^1(\Gamma_{K^\ur}, G)$ obtained as the image of~$H^1(\Gamma_K, G)$ under the restriction map.
\end{definition}

When~$G$ is abelian, $H^1(\Gamma_K, G) = \Hom(\Gamma_K, G)$ is an abelian group, and
\[
  \cI(K, G)
  \simeq
  \bigslant
    {\Hom(\Gamma_K, G)}
    {\Hom_\ur(\Gamma_K, G).}
\]

\subsubsection{Local last jumps and discriminant exponents}

Let~$K$ be a local field with perfect residue field.
For any real number $v \geq -1$, let $\Gamma_K^v$ be the $v$-th ramification subgroup in the upper numbering, cf.~\cite[Chap.~IV, §3]{serrecl}.
These higher ramification subgroups form a non-increasing filtration of~$\Gamma_K$ by normal subgroups.
Here are some of their properties:
\begin{itemize}
  \item
    $\Gamma_K^{-1}$ is all of~$\Gamma_K$
  \item
    for any $v \in (-1,0]$, $\Gamma_K^v = \Gamma_K^0$ is the inertia subgroup
  \item
    for any $v \in (0,1]$, $\Gamma_K^v = \Gamma_K^1$ is the wild inertia subgroup (the unique pro-$p$-Sylow subgroup of~$\Gamma_K^0$)
  \item
    $\bigcap_{v \geq -1} \Gamma_K^v = 1$.
\end{itemize}

\begin{definition}
  \label{def:disc}
  Let~$G$ be a finite group, and let $\rho \in \Hom(\Gamma_K, G)$.
  The \emph{last jump} of~$\rho$ is the nonnegative rational number
  \[
    \lastjump\rho :=
    \inf\suchthat{
        v \in \Q_{\geq 0}
    }{
        \rho(\Gamma_K^v) = 1
    }
    \in \frac1{\card G}\Z_{\geq0}.
  \]
  and the \emph{discriminant exponent} of~$\rho$ is the nonnegative integer
  \[
    \delta(\rho)
    :=
    \card{G}
    \int_{-1}^{+\infty}
      \leftl(
        1
        -
        \frac1{\card {\rho(\Gamma_K^v)}}
      \rightr)
      \dd v
      \quad
      \in \Z_{\geq 0}.
  \]
\end{definition}

Both the last jump and the discriminant exponent depend on~$\rho$ only up to $G$-conjugation, and depend only on the restriction of~$\rho$ to the inertia subgroup~$\Gamma_K^0$ (we have $\Gamma_K^v=\Gamma_K^0$ for all $-1<v\leq0$).
Hence, these invariants are well-defined at the level of $G$-extensions $[\rho] \in H^1(\Gamma_K, G)$ (cf.~\Cref{def:extension}), and at the level of inertial types $[\rho] \in \cI(K,G)$ (cf.~\Cref{def:inertial-type}).

The last jump of~$\rho$ vanishes if and only if~$\rho$ is at most tamely ramified (i.e., $\rho(\Gamma_K^1)=1$), and the discriminant exponent vanishes if and only if~$\rho$ is unramified (i.e., $\rho(\Gamma_K^0)=1$, cf.~\Cref{def:unramified}).
If~$\rho$ is at most tamely ramified, then $\Gamma_K^v = 1$ for all $v > 0$, and $\Gamma_K^v = \Gamma_K^0$ for all $v \in (-1,0]$, so $\delta(\rho) = \card G (1-\frac1e)$ where $e=\card{\rho(\Gamma_K^0)}$.

The following proposition relates~$\delta$ to relative discriminants in the usual sense:

\begin{proposition}
  \label{prop:discrexp-is-discrexp}
  Let~$L|K$ be a finite Galois extension of local fields, and let $G := \Gal(L|K)$.
  Regard~$L$ as embedded in~$K^\sep$, and let $\rho \in \Hom^\surj(\Gamma_K, G)$ be the (surjective) continuous homomorphism corresponding to the action of~$\Gamma_K$ on~$L$.
  Let $\mathfrak d_{L|K}$ be the relative discriminant of~$L|K$ (a nonzero ideal of~$\cO_K$, cf.~\cite[Chap.~III, §3]{serrecl}).
  Then, $v_K(\mathfrak d_{L|K}) = \delta(\rho)$.
\end{proposition}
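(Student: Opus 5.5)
The plan is to compute $v_K(\mathfrak d_{L|K})$ from the different and the lower numbering filtration, and then convert to the upper numbering with Herbrand's function.

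\textbf{Step 1: different and discriminant.} Let $f$ be the residue degree and $e$ the ramification index of $L|K$, and let $\mathfrak D_{L|K}$ be the different. By \cite[Chap.~III, §4, Prop.~6]{serrecl} we have $v_K(\mathfrak d_{L|K}) = f\, v_L(\mathfrak D_{L|K})$. Hilbert's formula \cite[Chap.~IV, §1, Prop.~4]{serrecl} gives
\[
  v_L(\mathfrak D_{L|K}) = \sum_{i \geq 0} (\card{G_i} - 1) = \int_{-1}^{+\infty} (\card{G_u} - 1)\dd u,
\]
where $G_u$ denotes the lower numbering ramification groups. The integral form holds because $G_u = G_{\ceil u}$ for $u > -1$.

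\textbf{Step 2: change of variables.} Put $v = \varphi_{L|K}(u)$. Then $\dd v = \frac{\dd u}{[G_0 : G_u]}$ for $u \geq 0$, and $v=u$ on $[-1,0]$. Since $G^v = G_u$, this gives $\card{G_u} = \card{G^v}$ and
\[
  \dd u = \frac{\card{G_0}}{\card{G^v}} \dd v = \frac{e}{\card{G^v}} \dd v \quad (v\ge 0).
\]
For $u\in[-1,0]$ the integrand equals $\card{G_0}-1 = e - 1$. This matches $\frac{e}{\card{G^v}}(\card{G^v}-1)$ with $G^v = G_0$, except that the measure there is $\dd v$ and not $\frac{e}{e}\dd v$, which is consistent. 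Hence in all cases
\[
  v_L(\mathfrak D_{L|K}) = \int_{-1}^{+\infty} e\Bigl(1 - \frac{1}{\card{G^v}}\Bigr)\dd v .
\]
On $(-1,0]$ this should be checked directly: the left contribution is $e-1$, while the right contribution is $e(1-\frac1e)=e-1$. So the formula holds.

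\textbf{Step 3: compatibility with $\Gamma_K^v$.} The upper numbering is compatible with quotients (Herbrand's theorem \cite[Chap.~IV, §3]{serrecl}), and this is how $\Gamma_K^v$ is defined. Therefore $\rho(\Gamma_K^v) = G^v$. Multiplying by $f$ and using $ef = \card G$ gives
\[
  v_K(\mathfrak d_{L|K}) = \card G \int_{-1}^{+\infty}\Bigl(1 - \frac1{\card{\rho(\Gamma_K^v)}}\Bigr)\dd v = \delta(\rho).
\]

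\textbf{Main obstacle.} The only delicate point is bookkeeping in the change of variables: the derivative of $\psi = \varphi^{-1}$ must be handled correctly on the segment $[-1,0]$ and at the break points. Everything else consists of standard results from \cite{serrecl}. The break points form a measure-zero set, so they do not affect the integrals.
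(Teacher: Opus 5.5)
Your proposal is correct and follows the paper's proof step by step: you express the discriminant as the norm of the different, apply Hilbert's formula written as an integral over the lower numbering, and change variables with Herbrand's function. The only differences are cosmetic. You write $f$ where the paper writes $\card G/e$, and you state explicitly the compatibility $\rho(\Gamma_K^v)=G^v$ of the upper numbering with quotients, which the paper uses tacitly in its last equality.
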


\begin{proof}
  By \cite[Chap.~III, §3, Prop.~6]{serrecl}, we have $\mathfrak d_{L|K} = \mathrm{N}_{L|K}(\mathfrak D_{L|K})$ where~$\mathfrak D_{L|K} \subseteq \cO_L$ is the different of the extension.
  Hence:
  \[
    v_K(\mathfrak d_{L|K}) =
    v_K(\mathrm{N}_{L|K}(\mathfrak D_{L|K})) =
    \frac{1}{e_{L|K}}
    v_L(\mathrm{N}_{L|K}(\mathfrak D_{L|K}))
    =
    \frac{[L:K]}{e_{L|K}}
    v_L(\mathfrak D_{L|K})
    =
    \frac{\card G}{e_{L|K}}
    v_L(\mathfrak D_{L|K}).
  \]
  Moreover:
  \begin{align*}
    v_L(\mathfrak D_{L|K})
    &=
    \sum_{i = 0}^{+\infty}
      \Bigl(
        \card{G_i} - 1
      \Bigr)
    \tag*{by \cite[Chap.~IV, §2, Prop.~4]{serrecl}}
    \\
    &=
    \int_{-1}^{+\infty}
      \Bigl(
        \card{G_t} - 1
      \Bigr)
      \dd t
    \tag*{as $G_t = G_{\ceil t}$}
    \\
    &=
    \int_{-1}^{+\infty}
      \Bigl(
        \card{G^s} - 1
      \Bigr)
      [G^0:G^s]
      \dd s
    \tag*{with $s = \varphi_{L|K}(t)$, cf.~\cite[Chap.~IV, §3]{serrecl}}
    \\
    &=
    \card{G^0}
    \int_{-1}^{+\infty}
      \Bigl(
        1 - \frac{1}{\card{G^s}}
      \Bigr)
      \dd s
    =
    e_{L|K} \cdot
    \int_{-1}^{+\infty}
      \Bigl(
        1 - \frac{1}{\card{\rho(\Gamma_K^s)}}
      \Bigr)
      \dd s.
      \tag*\qedhere
  \end{align*}
\end{proof}

\begin{remark}
  \label{rk:discr-étale-alg}
  If~$L|K$ is the étale $K$-algebra corresponding to some $\rho \in \Hom(\Gamma_K, G)$ (not necessarily surjective), then~$L$ is isomorphic to the product of~$[G:\im\rho]$ copies of the Galois extension $L' := (K^\sep)^{\ker \rho}$ of~$K$, of Galois group~$\im\rho$, so $v_K(\mathfrak d_{L|K}) = [G:\im\rho] \cdot v_K(\mathfrak d_{L'|K})$ in the sense of discriminants of étale algebras.
  If we let $\tilde\rho \in \Hom^\surj(\Gamma_K, \im\rho)$ be the corestriction of~$\rho$, i.e., the surjective homomorphism associated to~$L'$, we have by definition $\delta(\rho) = [G:\im\rho]\delta(\tilde\rho)$, and $\delta(\tilde\rho) = v_K(\mathfrak d_{L'|K})$ by \Cref{prop:discrexp-is-discrexp}.
  Therefore, we also have $\delta(\rho) = v_K(\mathfrak d_{L|K})$.
\end{remark}

\begin{definition}
  \label{def:disc-rel}
  Consider a short exact sequence $1 \to N \to G \overset\pi\to Q \to 1$ of finite groups, and let $\rho \in \Hom(\Gamma_K, G)$.
  The \emph{relative discriminant exponent of~$\rho$ (with respect to~$\pi$)} is defined as
  \[
    \delta^\rel(\rho) :=
    \frac{1}{\card Q}
    \Bigl(
      \delta(\rho) - \card N \cdot \delta(\pi\circ\rho)
    \Bigr)
    =
    \card N
    \int_{-1}^{+\infty}
    \leftl(
      \frac1{\card{\pi(\rho(\Gamma_K^s))}}
      -
      \frac1{\card{\rho(\Gamma_K^s)}}
    \rightr)
    \dd s
  \]
  so that $\delta(\rho) = \card N \cdot \delta(\pi\circ\rho) + \card Q \cdot \delta^\rel(\rho)$.
\end{definition}

\subsection{Reminders on Kummer theory}
\label{subsn:kummer}

Fix a field~$K$ of characteristic~$p$.
We shall focus on the $C_{p-1}$-extensions of~$K$.
(Any $C_d$-extension with~$d \mid p-1$ is in particular a $C_{p-1}$-extension.)
Observe that the isomorphism $C_{p-1} \simeq \F_p^\times$ holds even as $\Z[\Gamma_K]$-modules: indeed, the Galois action is trivial as~$\F_p^\times \subseteq K$.
A description of $\Hom(\Gamma_K, C_{p-1}) = H^1(\Gamma_K, \F_p^\times)$ is obtained by considering the short exact sequence
\[
  1
  \longrightarrow
  \F_p^\times
  \overset\subset\longrightarrow
  (K^\sep)^\times
  \overset{\bullet^{p-1}}\longrightarrow
  (K^\sep)^\times
  \longrightarrow
  1
\]
which leads to the following exact sequence in Galois cohomology, in which $H^1(\Gamma_K, (K^\sep)^\times) = 1$ by Hilbert's Theorem 90 \cite[Chap.~X, §1, Prop.~2]{serrecl}:
\[
  1
  \longrightarrow
  \F_p^\times
  \overset\subset\longrightarrow
  K^\times
  \overset{\bullet^{p-1}}\longrightarrow
  K^\times
  \longrightarrow
  H^1(\Gamma_K, \F_p^\times)
  \longrightarrow
  1
\]
whence an isomorphism of abelian groups (``Kummer theory'')
\begin{equation}
  \label{eq:kummer}
  \Hom(\Gamma_K, C_{p-1})
  \simeq
  K^\times/(K^\times)^{p-1}.
\end{equation}

(Here, $(K^\times)^{p-1}$ is the subgroup of $(p-1)$-th powers in~$K^\times$.
This is not a Cartesian product!)

\begin{remark}
  \label{rk:kummer-ext-finite-fields}
  For any finite field~$\F$, the short exact sequence $1 \to \F_p^\times \overset\subset\to \F^\times \overset{\bullet^{p-1}}\to (\F^\times)^{p-1} \to 1$ shows that $\card {\F^\times} = \card {\F_p^\times} \cdot \card{(\F^\times)^{p-1}}$, whence
  \[
    \card{\F^\times/(\F^\times)^{p-1}}
    =
    \frac{\card{\F^\times}}{\card{(\F^\times)^{p-1}}}
    =
    \card{\F_p^\times}
    =
    p-1.
  \]
\end{remark}

\begin{definition}
  \label{def:kummer-character}
  For any~$a \in K^\times$, we let $\chi_a \in \Hom(\Gamma_K, C_{p-1})$ be the Kummer character associated to~$a$, given by $\chi_a(\tau) = \tau(\alpha)/\alpha$ where~$\alpha \in (K^\sep)^\times$ is any $(p-1)$-th root of~$a$.
\end{definition}

\begin{lemma}
  \label{lem:kummer-multiplicative}
  For any $a,b \in K^\times$, we have $\chi_a + \chi_b = \chi_{ab}$.
\end{lemma}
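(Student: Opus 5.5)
The plan is to prove the identity pointwise, evaluating both sides at an arbitrary $\tau \in \Gamma_K$. Throughout, the group law on $\Hom(\Gamma_K, C_{p-1})$ is written additively but, via $C_{p-1} \simeq \F_p^\times$, corresponds to multiplication of values in~$\F_p^\times$. So the statement to verify is $\chi_{ab}(\tau) = \chi_a(\tau)\chi_b(\tau)$ for all~$\tau$.

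First fix $(p-1)$-th roots $\alpha, \beta \in (K^\sep)^\times$ of $a$ and~$b$. The product $\alpha\beta$ is nonzero and satisfies $(\alpha\beta)^{p-1} = ab$, so it is a $(p-1)$-th root of~$ab$. \Cref{def:kummer-character} allows any $(p-1)$-th root to be used, so we may compute $\chi_{ab}$ with~$\alpha\beta$.

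Since $\tau$ is a field automorphism, it is multiplicative, and hence
\[
  \chi_{ab}(\tau) = \frac{\tau(\alpha\beta)}{\alpha\beta} = \frac{\tau(\alpha)}{\alpha}\cdot\frac{\tau(\beta)}{\beta} = \chi_a(\tau)\,\chi_b(\tau).
\]
This is the required identity.

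The only point needing care is that $\chi_a$ is well-defined, i.e.\ independent of the chosen root. This is presumably implicit in \Cref{def:kummer-character}, but it is easy to check. Any two roots differ by a factor $\zeta \in \F_p^\times \subseteq K$, and $\tau$ fixes~$\zeta$, so the ratio $\tau(\alpha)/\alpha$ does not change. There is no real obstacle in this proof.
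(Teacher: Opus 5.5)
Your proof is correct, and it takes a slightly different, more elementary route than the paper. The paper proves the lemma in one line by citing that $a \mapsto \chi_a$ is the Kummer isomorphism $K^\times/(K^\times)^{p-1} \simto \Hom(\Gamma_K, C_{p-1})$ of~\eqref{eq:kummer}. That citation rests on two things: the general fact that the connecting map of the long exact cohomology sequence is a group homomorphism, and the implicit identification of that connecting map with the explicit formula $\tau \mapsto \tau(\alpha)/\alpha$. You instead verify multiplicativity directly, by using $\alpha\beta$ as a $(p-1)$-th root of $ab$ and the multiplicativity of $\tau$.

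Your computation is exactly why the connecting map is a homomorphism here, so the two arguments are logically very close. Yours is self-contained and does not need to identify $\chi_a$ with the coboundary. The paper's route instead bundles the lemma with the stronger facts it needs anyway for \Cref{prop:kummer-local-normal-form} and \Cref{prop:kummer-inertia}, namely bijectivity and the kernel $(K^\times)^{p-1}$. Your check that $\chi_a$ does not depend on the chosen root is also correct: two roots differ by an element of $\F_p^\times$, which $\Gamma_K$ fixes.
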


\begin{proof}
  This follows from the fact that $a \mapsto \chi_a$ is a group isomorphism $K^\times/(K^\times)^{p-1} \simto \Hom(\Gamma_K, C_{p-1})$, cf.~\Cref{eq:kummer}.
\end{proof}

\begin{definition}
  For any valuation~$v$ of~$K$ and for any $\chi \in \Hom(\Gamma_K, C_{p-1})$, we define
  \[
    \inv_v(\chi)
    :=
    v(a) \bmod {(p-1)}
    \quad
    \in C_{p-1}
  \]
  where~$a \in K^\times$ is any element such that $\chi = \chi_a$.
  (This depends only on~$[a] \in K^\times/(K^\times)^{p-1}$.)
\end{definition}

\subsubsection{Local Kummer extensions}

Assume that $K = \kappa\ppar\pi$ is a local function field with finite residue field~$\kappa$ of characteristic~$p$, and let~$v$ be the $\pi$-adic valuation.

\begin{lemma}
  \label{lem:power-criterion-leadcoeff}
  Let $a \in K^\times$, let $u := v(a)$, and let $A \in \kappa^\times$ be the coefficient of~$a$ in front of~$\pi^u$.
  Then:
  \[
    a \in (K^\times)^{p-1} 
    \iff
    \Bigl(
      p-1 \mid u
      \text{ and }
      A \in (\kappa^\times)^{p-1}
    \Bigr).
  \]
\end{lemma}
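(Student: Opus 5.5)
Write $a = A\pi^u(1+\varepsilon)$ with $\varepsilon \in \pi\kappa\bbra\pi$; this uses that $\kappa\ppar\pi$ consists of Laurent series with coefficients in~$\kappa$. The plan is to treat the three factors $\pi^u$, $A$ and $1+\varepsilon$ separately. The key facts are that the principal units $1+\pi\kappa\bbra\pi$ are all $(p-1)$-th powers, and that $\kappa^\times$ and $\pi^{\Z}$ split off cleanly.

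For the forward direction, suppose $a = b^{p-1}$ with $b \in K^\times$. Taking valuations gives $u = (p-1)v(b)$, so $p-1 \mid u$. Write $b = B\pi^{v(b)}(1+\text{higher terms})$ with $B \in \kappa^\times$. Multiplying the expansions, the leading coefficient of $b^{p-1}$ is $B^{p-1}$, so $A = B^{p-1} \in (\kappa^\times)^{p-1}$.

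For the converse, suppose $u = (p-1)m$ and $A = B^{p-1}$ with $B \in \kappa^\times$. Then $a = (B\pi^m)^{p-1}(1+\varepsilon)$, and it remains to show that $1+\varepsilon$ is a $(p-1)$-th power in~$K$. Consider the polynomial $f(X) = X^{p-1} - (1+\varepsilon) \in \kappa\bbra\pi[X]$. Its reduction modulo~$\pi$ is $X^{p-1}-1$, which has the simple root $X=1$: indeed $f'(1) \equiv p-1 \equiv -1 \not\equiv 0 \pmod \pi$, since $p-1$ is a unit in characteristic~$p$. Hensel's lemma in the complete discrete valuation ring $\kappa\bbra\pi$ then gives a root $c \in 1+\pi\kappa\bbra\pi$ with $c^{p-1} = 1+\varepsilon$. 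Alternatively, one can take $c$ to be the binomial series $(1+\varepsilon)^{1/(p-1)}$, whose coefficients $\binom{1/(p-1)}{k}$ lie in $\Z_{(p)}$ and hence make sense in~$\F_p$. Putting the pieces together, $a = (B\pi^m c)^{p-1}$.

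The only step needing care is this Hensel argument. It works precisely because $p-1$ is prime to~$p$, which makes the relevant derivative a unit; the finiteness of~$\kappa$ is not needed.
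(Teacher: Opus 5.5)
Your proof is correct and takes the same approach as the paper, whose entire proof is ``use Hensel's lemma, or the power series for $(1+x)^{\frac1{p-1}}$''. You have filled in the details the paper leaves implicit: the forward direction via valuations and leading coefficients, and the reduction of the converse to showing that principal units are $(p-1)$-th powers.
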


\begin{proof}
  Use Hensel's lemma, or define an explicit $(p-1)$-th root using the power series for $(1+x)^{\frac1{p-1}}$.
\end{proof}

\begin{proposition}
  \label{prop:kummer-local-normal-form}
  The following map is a well-defined group isomorphism:
  \[
    \begin{matrix}
      \Xi \colon & 
      C_{p-1} \times \kappa^\times/(\kappa^\times)^{p-1} & \longrightarrow & K^\times/(K^\times)^{p-1} \\
      & (\bar u, [\tilde a]) & \longmapsto & [\pi^u \tilde a].
    \end{matrix}
  \]
\end{proposition}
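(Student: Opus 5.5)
The plan is to check well-definedness and the homomorphism property first, then deduce bijectivity from \Cref{lem:power-criterion-leadcoeff}.

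\emph{Well-definedness.} Given $(\bar u, [\tilde a])$, choose a representative $u \in \Z$ of $\bar u$ and a representative $\tilde a \in \kappa^\times$. We must show that the class $[\pi^u \tilde a] \in K^\times/(K^\times)^{p-1}$ does not depend on these choices.
\begin{itemize}
  \item Replacing $u$ by $u + k(p-1)$ multiplies $\pi^u\tilde a$ by $(\pi^k)^{p-1}$.
  \item Replacing $\tilde a$ by $\tilde a b^{p-1}$ with $b \in \kappa^\times$ multiplies it by $b^{p-1}$.
\end{itemize}
In both cases the factor lies in $(K^\times)^{p-1}$, so the class is unchanged. The homomorphism property is then immediate from $\pi^{u+u'}\tilde a\tilde a' = (\pi^u\tilde a)(\pi^{u'}\tilde a')$.

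\emph{Injectivity.} Suppose $\pi^u\tilde a \in (K^\times)^{p-1}$. This element has valuation $u$ and leading coefficient $\tilde a$. By \Cref{lem:power-criterion-leadcoeff}, we get $p-1 \mid u$ and $\tilde a \in (\kappa^\times)^{p-1}$. Hence $(\bar u,[\tilde a])$ is the identity of the source, so the kernel is trivial.

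\emph{Surjectivity.} Let $a \in K^\times$. Write $u := v(a)$, and let $A \in \kappa^\times$ be the coefficient of $\pi^u$ in $a$. Set $a' := a\,\pi^{-u}A^{-1}$. Then $a'$ has valuation $0$ and leading coefficient $1 \in (\kappa^\times)^{p-1}$. Since $p-1 \mid 0$, \Cref{lem:power-criterion-leadcoeff} gives $a' \in (K^\times)^{p-1}$. Therefore $[a] = [\pi^u A] = \Xi(\bar u,[A])$.

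There is no real obstacle: all the work is contained in \Cref{lem:power-criterion-leadcoeff}. The only point needing care is that the valuation and the leading coefficient behave multiplicatively. This is what allows the lemma to be applied to the normalized element $a'$.
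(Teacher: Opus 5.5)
Your proposal is correct and follows essentially the same route as the paper: the same well-definedness check, and the same surjectivity argument, where normalizing $a$ to an element of $1+\fm$ and applying \Cref{lem:power-criterion-leadcoeff} does the work. The only cosmetic difference is in injectivity: you apply the forward direction of \Cref{lem:power-criterion-leadcoeff} directly, whereas the paper reduces to $u=0$ and then uses that the $(p-1)$-th roots of $\tilde a$ lying in $K$ belong to $K^\times\cap\bar\kappa^\times=\kappa^\times$.
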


\begin{proof}
  If $(\bar u, [\tilde a]) = (\bar{u'}, [\tilde a'])$ in $C_{p-1} \times \kappa^\times/(\kappa^\times)^{p-1}$, then $u' = u+k(p-1)$ for some $k \in \Z$, and $\tilde a' = \mu^{p-1} \tilde a$ for some $\mu \in \kappa^\times$.
  Then, $\pi^{u'}\tilde a' = (\pi^k\mu)^{p-1} \pi^u \tilde a$ has the same image as~$\pi^u\tilde a$ in~$K^\times/(K^\times)^{p-1}$.
  Therefore, $\Xi$ is well-defined.
  It is also clearly a group homomorphism.

  We now prove surjectivity.
  Any~$a \in K^\times$ can be written as~$\pi^u \tilde a a'$ with $u=v(a)$, $\tilde a \in \kappa^\times$ and $a' \in 1+\fm$.
  Hence, it suffices to show that $1+\fm \subseteq (K^\times)^{p-1}$, which follows from \Cref{lem:power-criterion-leadcoeff}.

  To prove injectivity, assume that $\pi^u \tilde a$ is a $(p-1)$-th power.
  Then, $p-1 \mid u$, so we can simply assume that $u=0$.
  The element $\tilde a \in \kappa^\times$ must be a $(p-1)$-th power in~$K^\times$, so its $(p-1)$-th roots must belong to $K^\times \cap \bar\kappa^\times = \kappa^\times$, so in fact $\tilde a \in (\kappa^\times)^{p-1}$ and then $[\tilde a] = [1]$ in $\kappa^\times/(\kappa^\times)^{p-1}$.
\end{proof}

\begin{corollary}
  \label{cor:kummer-local-inertype}
  The map~$\inv_v$ induces an isomorphism $\cI(K, C_{p-1}) \simeq C_{p-1}$.
\end{corollary}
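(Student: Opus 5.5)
We need to show that the map $\inv_v$ on $\Hom(\Gamma_K, C_{p-1})$ is constant on the fibers of the restriction map to $\Gamma_{K^\ur}$ and induces a bijection $\cI(K, C_{p-1}) \simeq C_{p-1}$. Since $C_{p-1}$ is abelian, $\cI(K, C_{p-1}) \simeq \Hom(\Gamma_K, C_{p-1})/\Hom_\ur(\Gamma_K, C_{p-1})$. Through the Kummer isomorphism~\eqref{eq:kummer} and \Cref{prop:kummer-local-normal-form}, we have $\Hom(\Gamma_K, C_{p-1}) \simeq C_{p-1} \times \kappa^\times/(\kappa^\times)^{p-1}$. Under this identification $\inv_v$ is the projection onto the first factor, because $v(\pi^u \tilde a) = u$. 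The plan is therefore to identify $\Hom_\ur(\Gamma_K, C_{p-1})$ with the second factor $\{\bar 0\} \times \kappa^\times/(\kappa^\times)^{p-1}$. The statement then follows: $\inv_v$ is a surjective homomorphism onto $C_{p-1}$ whose kernel is exactly the unramified characters.

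First inclusion: for $\tilde a \in \kappa^\times$, a $(p-1)$-th root $\alpha$ of $\tilde a$ lies in $\bar\kappa \subseteq K^\ur$. Hence $\chi_{\tilde a}$ is trivial on $\Gamma_{K^\ur}$, i.e.\ unramified.

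Reverse inclusion: I would compare cardinalities instead of arguing directly. Unramified characters factor through $\Gal(K^\ur|K) \simeq \hat\Z$, and a continuous homomorphism $\hat\Z \to C_{p-1}$ is determined by the image of Frobenius. So $\card{\Hom_\ur(\Gamma_K, C_{p-1})} = p-1$. By \Cref{rk:kummer-ext-finite-fields}, the subgroup $\{\bar 0\}\times\kappa^\times/(\kappa^\times)^{p-1}$ also has $p-1$ elements. The inclusion above is therefore an equality.

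Alternatively, one can see directly that $\chi_{\pi^u\tilde a}$ with $u \not\equiv 0 \pmod{p-1}$ is ramified. The extension $K^\ur(\alpha)|K^\ur$ with $\alpha^{p-1} = \pi^u \tilde a$ has ramification index divisible by $(p-1)/\gcd(u,p-1) > 1$, by a valuation computation. Either argument works; the counting one is cleaner and is the one I would write. There is no real obstacle. The only care needed is the identification of $\cI(K,C_{p-1})$ with the quotient by unramified homomorphisms, which the paper has already stated for abelian~$G$.
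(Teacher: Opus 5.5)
Your proof is correct and follows the paper's route: Kummer theory plus the normal form $\Xi$ of \Cref{prop:kummer-local-normal-form} turn $\inv_v$ into the projection onto $C_{p-1}$, and $\Hom_\ur(\Gamma_K,C_{p-1})$ is matched with the factor $\kappa^\times/(\kappa^\times)^{p-1}$. The only difference is that the paper simply invokes $\Hom_\ur(\Gamma_K,C_{p-1})\simeq\Hom(\Gamma_\kappa,C_{p-1})\simeq\kappa^\times/(\kappa^\times)^{p-1}$, whereas you check this identification by hand (the inclusion $\chi_{\tilde a}\in\Hom_\ur(\Gamma_K,C_{p-1})$ together with the count of $p-1$ elements on each side), which makes explicit its compatibility with the Kummer map over $K$.
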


\begin{proof}
  By Kummer theory, we have $\Hom(\Gamma_K, C_{p-1}) \simeq K^\times/(K^\times)^{p-1}$ and  $\Hom_\ur(\Gamma_K, C_{p-1}) \simeq \Hom(\Gamma_{\kappa}, C_{p-1}) \simeq \kappa^\times/(\kappa^\times)^{p-1}$.
  The result follows by combining these isomorphisms with \Cref{prop:kummer-local-normal-form}.
  (We have $\inv_v(\chi_{\pi^u \tilde a}) = u \pmod {p-1}$.)
\end{proof}

Let~$\tilde\sigma \in \Gamma_K$ be a lift of the Frobenius automorphism $\sigma \colon x \mapsto x^{\card\kappa}$ of~$\bar\kappa$ (a topological generator of~$\Gal(\bar\kappa|\kappa)$).

\begin{proposition}
  \label{prop:kummer-inertia}
  Fix $u \in C_{p-1}$.
  There is a bijection
  \[
    \begin{matrix}
    \suchthat{\chi \in \Hom(\Gamma_K, C_{p-1})}{\inv \chi = u} &
    \overset\sim\longleftrightarrow &
    C_{p-1} \\
    \chi &
    \longmapsto &
    \chi(\tilde\sigma).
    \end{matrix}
  \]
  For any $\chi \in \Hom(\Gamma_K, C_{p-1})$ with $\inv\chi = u$, we have the following descriptions (as subgroups of~$C_{p-1}$)
  \[
    \chi(\Gamma_K) = \langle u, \chi(\tilde\sigma) \rangle
    \andd
    \chi(\Gamma_K^0) = \langle u \rangle.
  \]
  In particular:
  \[
    \card{\chi(\Gamma_K)}
    =
    \frac{p-1}{\gcd(u,\chi(\tilde\sigma),p-1)}
    \andd
    \card{\chi(\Gamma_K^0)}
    =
    \frac{p-1}{\gcd(u,p-1)}.
  \]
\end{proposition}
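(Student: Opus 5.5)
We would realize every character explicitly through the normal form of \Cref{prop:kummer-local-normal-form}. By that proposition and \Cref{eq:kummer}, the characters~$\chi$ with $\inv\chi = u$ are exactly the $\chi_{\pi^u\tilde a}$ with $[\tilde a] \in \kappa^\times/(\kappa^\times)^{p-1}$, and there are $p-1$ of them by \Cref{rk:kummer-ext-finite-fields}. For such a character, $\chi_{\pi^u\tilde a} = \chi_{\pi^u} + \chi_{\tilde a}$ by \Cref{lem:kummer-multiplicative}. We fix the lift~$\tilde\sigma$ and a $(p-1)$-th root $\varpi$ of~$\pi$ in~$K^\sep$. The key is to separate an unramified part from a tamely ramified part.

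For the unramified part, $\chi_{\tilde a}$ factors through $\Gal(\bar\kappa|\kappa)$, since a $(p-1)$-th root $\beta$ of $\tilde a$ lies in~$\bar\kappa$. Hence $\chi_{\tilde a}(\tilde\sigma) = \beta^{q-1}$, where $q = \card\kappa$, and $\chi_{\tilde a}$ is trivial on~$\Gamma_K^0$. The map $[\tilde a] \mapsto \chi_{\tilde a}(\tilde\sigma) = \tilde a^{(q-1)/(p-1)}$ is the norm-like map $\kappa^\times/(\kappa^\times)^{p-1} \to \F_p^\times$. It is injective because $\kappa^\times$ is cyclic of order $q-1$: an element with $\tilde a^{(q-1)/(p-1)} = 1$ lies in the unique subgroup of index $p-1$, which is $(\kappa^\times)^{p-1}$. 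Both sides have order $p-1$, so the map is bijective.

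For the ramified part, $\chi_{\pi^u}(\tau) = \tau(\varpi)^u/\varpi^u$. Adding $\chi_{\pi^u}(\tilde\sigma)$, which is a fixed element depending only on~$u$ and the chosen $\varpi$ and~$\tilde\sigma$, is a translation in~$C_{p-1}$. Therefore $\chi \mapsto \chi(\tilde\sigma)$ is a bijection on the set of characters with $\inv\chi = u$. The main subtlety is that $\chi_{\pi^u}(\tilde\sigma)$ need not vanish. This is harmless for bijectivity, but it requires care for the image.

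For the image of inertia, the extension $K(\varpi)|K$ is totally tamely ramified of degree $p-1$, and $\Gamma_K^0$ surjects onto $\Gal(K(\varpi)|K) \simeq \F_p^\times$ via $\tau \mapsto \tau(\varpi)/\varpi$. Thus $\chi_{\pi}(\Gamma_K^0) = C_{p-1}$, and raising to the $u$-th power gives $\chi(\Gamma_K^0) = \chi_{\pi^u}(\Gamma_K^0) = \langle u\rangle$, using additive notation with $\chi_\pi$ identified with a generator. Writing $\chi(\tilde\sigma) = s$, the group $\Gamma_K$ is topologically generated by $\Gamma_K^0$ and~$\tilde\sigma$. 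Hence $\chi(\Gamma_K) = \langle u, s\rangle$.

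The cardinalities then follow because in $\Z/(p-1)$ the subgroup $\langle u, s\rangle$ has order $(p-1)/\gcd(u,s,p-1)$, and $\langle u \rangle$ has order $(p-1)/\gcd(u,p-1)$.
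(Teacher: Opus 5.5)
Your proof is correct, and its skeleton is the paper's. You split $\chi = \chi_{\pi^u} + \chi_{\tilde a}$ with $\chi_{\tilde a}$ unramified, obtain the bijection by translating by the fixed element $\chi_{\pi^u}(\tilde\sigma)$, and use that $\Gamma_K$ is topologically generated by $\Gamma_K^0$ and $\tilde\sigma$. Two sub-steps are argued differently.

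For the unramified part, the paper identifies $\Hom_\ur(\Gamma_K, C_{p-1})$ with $\Hom(\Gamma_\kappa, C_{p-1})$. It then uses that a homomorphism out of the procyclic group $\Gamma_\kappa$ is freely determined by its value at $\sigma$. You instead compute that value as the power residue $\tilde a^{(q-1)/(p-1)}$ and check bijectivity via the cyclicity of $\kappa^\times$. This is more explicit and uses the finiteness of $\kappa$, which is assumed here anyway.

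For the image of inertia, the paper works with each $u$ separately. Setting $d = (p-1)/\gcd(u,p-1)$, a B\'ezout relation produces $\alpha'$ with $(\alpha')^d = \pi$. This shows that adjoining a $(p-1)$-th root of $\pi^u$ gives a totally ramified extension of degree $d$, and the paper concludes because subgroups of $C_{p-1}$ are determined by their order. You treat only $u = 1$, where $X^{p-1} - \pi$ is Eisenstein, so $\chi_\pi(\Gamma_K^0) = C_{p-1}$. Then linearity $\chi_{\pi^u} = u\chi_\pi$ (\Cref{lem:kummer-multiplicative}) gives $\chi_{\pi^u}(\Gamma_K^0) = u\,C_{p-1} = \langle u\rangle$ at once.

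Your route is shorter and avoids the B\'ezout manipulation. The paper's has the small side benefit of identifying the Kummer field itself as $K(\pi^{1/d})$.
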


\begin{proof}
  By \Cref{prop:kummer-local-normal-form}, \Cref{eq:kummer} and \Cref{lem:kummer-multiplicative}, we have
  \[
    \suchthat{
      \chi \in \Hom(\Gamma_K, C_{p-1})
    }{
      \inv \chi = u
    }
    =
    \suchthat{
      \chi_{\pi^u \tilde a}
    }{
      \tilde a \in \kappa^\times
    }
    =
    \Hom_\ur(\Gamma_K, C_{p-1})
    +
    \chi_{\pi^u}
  \]
  so a homomorphism~$\chi$ with $\inv\chi = u$ is equivalently given by the corresponding unramified homomorphism~$\chi - \chi_{\pi^u} = \chi_{\tilde a} \in \Hom_\ur(\Gamma_K, C_{p-1})$.
  Since $\Hom_\ur(\Gamma_K, C_{p-1}) \simeq \Hom(\Gamma_{\F_q}, C_{p-1})$ and~$\Gamma_{\F_q}$ is topologically generated by~$\sigma$ (the image of~$\tilde\sigma$), $\chi$ is equivalently given by the element $\chi_{\tilde a}(\sigma) = \chi_{\tilde a}(\tilde\sigma) = \chi(\tilde\sigma) - \chi_{\pi^u}(\tilde\sigma)$, whence the first bijection.
  Any $\tau \in \Gamma_K$ decomposes uniquely as $\tilde\sigma^r \tau_0$ for some $\tau_0 \in \Gamma_K^0$ and $r \in \hat\Z$ ($r$ is the image of~$\tau$ in $\Gamma_{\F_q} \simeq \hat\Z$, and $\tau_0 := \tilde\sigma^{-r} \tau$ is in~$\Gamma_K^0$ as its image in $\Gamma_{\F_q}$ is trivial by definition).
  Therefore, $\chi(\Gamma_K)$ is topologically generated (and hence generated, as~$C_{p-1}$ is finite) by its subgroup~$\chi(\Gamma_K^0)$ and by~$\chi(\tilde\sigma)$.
  It remains only to show that~$\chi(\Gamma_K^0)$ is generated by~$u$.
  Since $\chi - \chi_{\pi^u} \in \Hom_\ur(\Gamma_K, C_{p-1})$, $\chi$ coincides with~$\chi_{\pi^u}$ in restriction to~$\Gamma_K^0$, so we can focus on~$\chi_{\pi^u}$.
  Let $g := \gcd(u, p-1)$, $d = (p-1)/g$, and $u' = u/g$.
  The integers $d$ and $u'$ are coprime, so we fix $x,y \in \Z$ such that $xd+yu' = 1$.
  Pick a $d$-th root~$\alpha$ of $\pi^{u'}$, which is then also a $(p-1)$-th root of $\pi^u$.
  Since subgroups of~$C_{p-1}$ are determined by their sizes, it suffices to prove that~$K(\alpha)|K$ has ramification index equal to~$d$.
  Let $\alpha' := \pi^x \alpha^y$.
  We have $(\alpha')^{u'} = \pi^{xu'} \alpha^{yu'} = \alpha^{xd+yu'} = \alpha$, so $K(\alpha) = K(\alpha')$, and $(\alpha')^d = \pi^{xd}\alpha^{yd}=\pi^{xd}\pi^{yu'} = \pi$, so $\alpha'$ is a root of the polynomial $(\alpha')^d - \pi$, which is Eisenstein.
  Therefore, $K(\alpha) = K(\alpha')$ is a totally ramified $C_d$-extension of~$K$, so its ramification index is indeed~$d$.
\end{proof}

\section{$(C_p \rtimes C_{p-1})$-extensions in characteristic~$p$}

Let~$K$ be a field of characteristic~$p$.

\subsection{Twisted Artin--Schreier theory}
Let $a \in K^\times$, and fix a $(p-1)$-th root~$\alpha \in (K^\sep)^\times$ of~$a$.

\begin{definition}
  The \emph{$a$-twisted Artin--Schreier map} is the map $\wpa \colon K^\sep \to K^\sep$, $x \mapsto x^p - a x$.
\end{definition}

The map~$\wpa$ is surjective and $\F_p$-linear.
The kernel of~$\wpa$ consists of~$0$ together with the $(p-1)$-th roots of~$a$, i.e., $\ker\wpa = \alpha \F_p$.
We have the short exact sequence
\begin{equation}
  \label{eq:twisted-artsch-1}
  0
  \longrightarrow
  \alpha \F_p
  \overset\subset\longrightarrow
  K^\sep
  \overset{\wpa}\longrightarrow
  K^\sep
  \longrightarrow
  0.
\end{equation}

\begin{remark}
  \label{rk:wp-injective}
  The restricted map $(\wpa)|_K \colon K \to K$ has kernel $\alpha \F_p \cap K$.
  Therefore, it is injective when $a \not\in (K^\times)^{p-1}$, and otherwise has kernel~$\alpha \F_p$ of size~$p$.
\end{remark}

In \Cref{eq:twisted-artsch-1}, the term~$\alpha \F_p$ is group-theoretically isomorphic to~$C_p$ (via division by~$\alpha$), but the $\Gamma_K$-action is ``twisted'' by the Kummer homomorphism $\chi_a \colon \tau \mapsto \tau(\alpha)/\alpha$.
By \cite[Chap.~X, §1, Prop.~1]{serrecl}, we have $H^1(\Gamma_K, K^\sep) = 0$, so we obtain the exact sequence in Galois cohomology
\begin{equation}
  \label{eq:twisted-artsch}
  0
  \longrightarrow
  \alpha \F_p \cap K
  \overset\subset\longrightarrow
  K
  \overset{\wpa}\longrightarrow
  K
  \longrightarrow
  H^1(\Gamma_K, \alpha \F_p)
  \longrightarrow
  0,
\end{equation}
whence an isomorphism $H^1(\Gamma_K, \alpha \F_p) \simeq K/\wpa(K)$.
We show in the same way that $H^i(\Gamma_K, \alpha \F_p) = 0$ for all $i \geq 2$.
For any $\lambda, a \in (K^\sep)^\times$, $\gamma \in K^\sep$, we have
\begin{equation}
  \label{eq:change-of-a}
  \wps {\lambda^{p-1}a} (\lambda \gamma)
  =
  \lambda^p \gamma^p - \lambda^{p-1} a \lambda \gamma
  =
  \lambda^p \wpa(\gamma).
\end{equation}
Applying this to $\lambda=\alpha^{-1}$ yields the following equality, relating twisted Artin--Schreier theory to ordinary Artin--Schreier theory:
\begin{equation}
  \label{prop:twist-vs-non-twist}
  \wpa(\gamma)
  = \alpha^p \wp(\alpha^{-1} \gamma)
  = a \alpha \wp(\alpha^{-1}\gamma).
\end{equation}
In particular, we have the equality of fields $K(\alpha, \wpa^{-1}(g)) = K(\alpha, \wp^{-1}(\alpha^{-p} g))$ for any $g \in K$.
Another consequence of \Cref{eq:change-of-a} is that, for any $\lambda \in K^\times$, multiplication by $\lambda^p$ induces an isomorphism $K/\wpa(K) \simto K/\wps {\lambda^{p-1}a}(K)$ of $\F_p$-vector spaces.

\subsection{Parametrization of $(C_p\rtimes C_{p-1})$-extensions}

We use \cite[Proposition~2.4]{wildcount} to parametrize $(C_p \rtimes C_{p-1})$-extensions in characteristic~$p$:%
\footnote{
  The same principle allows one to parametrize $\cG(\F_p)$-extensions for any algebraic group~$\cG$ over~$\F_p$ that can be resolved (using short exact sequences) to factors that are all~$\bG_a$ or~$\bG_m$ (or even~$\GL_n$, cf.~\cite[Chap.~X, §1, Prop.~3]{serrecl}).
  For instance, this applies to $(\F_p^n \rtimes \GL_n(\F_p))$-extensions, i.e., to affine groups of higher dimension.
}

\begin{theorem}
  \label{thm:main-param}
  To any $a \in K^\times$ and any $g \in K$, we can associate a $(C_p \rtimes C_{p-1})$-extension~$[\rho_{g, a}] \in H^1(\Gamma_K, C_p \rtimes C_{p-1})$, which is the $G$-conjugacy class consisting of the following continuous homomorphisms $\Gamma_K \to C_p \rtimes C_{p-1}$:
  \begin{equation}
    \label{def-rhoga}
    \tau
    \longmapsto
    \leftl(
      \frac
        {\tau(\gamma)-\gamma}
        \alpha
      , \,
      \frac
        {\tau(\alpha)}
        \alpha
    \rightr)
    \qqquad
    \text{for }
    \begin{cases}
      \alpha \in (K^\sep)^\times \text{ a $(p-1)$-th root of } a \\
      \gamma \in \wpa^{-1}(g).
    \end{cases}
  \end{equation}
  Moreover:
  \begin{enumroman}
    \item
      \label{thm:main-param-full}
      For any $[\rho] \in H^1(\Gamma_K, C_p \rtimes C_{p-1})$, there exist $g \in K$ and $a \in K^\times$ such that $[\rho] = [\rho_{g, a}]$;
    \item
      \label{thm:main-param-unique}
      For a fixed $a \in K^\times$, we have $[\rho_{g, a}] = [\rho_{g', a}]$ if and only if there exists $\mu \in \F_p^\times$ such that $g' - \mu g \in \wpa(K)$.
    \item
      \label{thm:main-param-change-of-a}
      If $a, \lambda \in K^\times$ and $g \in K$, then $[\rho_{\lambda^p g, \, \lambda^{p-1} a}] = [\rho_{g, a}]$.
  \end{enumroman}
\end{theorem}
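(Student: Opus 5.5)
We first check that \Cref{def-rhoga} defines a continuous homomorphism whose conjugacy class depends only on $(g,a)$. Write $G = C_p \rtimes C_{p-1}$ with multiplication $(x,s)(y,t) = (x+sy, st)$. For $\tau \in \Gamma_K$, $\tau(\gamma)$ is again a root of $X^p - aX - g$, so $\tau(\gamma)-\gamma \in \ker\wpa = \alpha\F_p$, and the first coordinate lies in $\F_p$. Put $c(\tau) = (\tau(\gamma)-\gamma)/\alpha$ and $\chi = \chi_a$. Then
\[
  c(\tau\tau') = \frac{\tau(\tau'\gamma-\gamma) + \tau\gamma-\gamma}{\alpha} = \chi(\tau)c(\tau') + c(\tau),
\]
which is exactly the homomorphism property in $G$. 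Continuity is clear because $\rho$ factors through $\Gal(K(\alpha,\gamma)|K)$.

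Next we vary the choices. Replacing $\gamma$ by $\gamma + \alpha m$ with $m\in\F_p$ changes $c$ by the coboundary $\tau \mapsto (\chi(\tau)-1)m$, i.e.\ conjugates $\rho$ by $(m,1)$ (up to sign). Replacing $\alpha$ by $s\alpha$ with $s \in \F_p^\times$ leaves $\chi$ unchanged and rescales $c$ by $s^{-1}$, i.e.\ conjugates by $(0,s)$. So $[\rho_{g,a}]$ is well defined.

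For \ref{thm:main-param-full}, we invoke \cite[Proposition~2.4]{wildcount}, or argue directly. Given $\rho$, the composite $\pi\circ\rho$ is a homomorphism $\Gamma_K \to \F_p^\times$, hence equals $\chi_a$ for some $a$ by \Cref{eq:kummer}. Fix a root $\alpha$ of $X^{p-1}-a$. The first component $c$ of $\rho$ is then a $1$-cocycle with values in $\F_p$ twisted by $\chi_a$, so $\alpha c$ is a cocycle with values in $\alpha\F_p$ with its natural Galois action. Since $H^1(\Gamma_K,K^\sep)=0$, the connecting map in \Cref{eq:twisted-artsch} shows that $\alpha c(\tau) = \tau\gamma-\gamma$ for some $\gamma\in K^\sep$. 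Then $\tau(\wpa\gamma) = \wpa\gamma$ for all $\tau$, so $g := \wpa(\gamma)\in K$, and $\rho = \rho_{g,a}$ on the nose.

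For \ref{thm:main-param-unique}, note first that both sides respect scaling: if $\gamma$ solves $\wpa\gamma=g$, then $\mu\gamma$ solves $\wpa(\mu\gamma)=\mu g$, giving cocycle $\mu c$. Scaling by $\mu$ is conjugation by $(0,\mu)$, so $[\rho_{\mu g,a}]=[\rho_{g,a}]$. Likewise, $g'-g=\wpa(h)$ with $h\in K$ allows $\gamma' = \gamma+h$, which gives the same $c$; this proves "if". For "only if", suppose $\rho_{g',a} = x\rho_{g,a}x^{-1}$ with $x=(m,s)$. Projecting to $C_{p-1}$ (which is abelian) shows that both use the same $\chi_a$. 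Conjugation by $(m,s)$ sends $c$ to $s c$ plus a coboundary, so $c'$ and $sc$ are cohomologous in $H^1(\Gamma_K,\alpha\F_p)$. The isomorphism $H^1 \simeq K/\wpa(K)$ from \Cref{eq:twisted-artsch}, under which the class of $\rho_{g,a}$ is $g$, then gives $g' - sg\in\wpa(K)$. The main care point is this bookkeeping with conjugation in the semidirect product and checking that the connecting map sends $g\mapsto [\tau\mapsto\tau\gamma-\gamma]$.

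For \ref{thm:main-param-change-of-a}, use \Cref{eq:change-of-a}. Take $\alpha' = \lambda\alpha$, which is a $(p-1)$-th root of $\lambda^{p-1}a$, and $\gamma' = \lambda\gamma$, which satisfies $\wps{\lambda^{p-1}a}(\gamma')=\lambda^p g$. Since $\lambda\in K$, we get $\tau(\alpha')/\alpha' = \tau(\alpha)/\alpha$ and $(\tau\gamma'-\gamma')/\alpha' = (\tau\gamma-\gamma)/\alpha$. So the homomorphisms coincide exactly.
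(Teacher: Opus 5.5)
Your proof is correct. It does not follow the paper's main proof, though: it is essentially the alternative route the paper only sketches in the remark after the theorem. There, $\chi_a$ is lifted via the canonical section, the lifts are classified by $\chi_a$-twisted cocycles, and $H^1(\Gamma_K,\alpha\F_p)$ is identified with $K/\wpa(K)$ through \eqref{eq:twisted-artsch}.

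The paper's main proof treats the whole group at once.

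\begin{enumroman}
  \item It works in $G_{K^\sep}=K^\sep\rtimes(K^\sep)^\times$ equipped with Frobenius $\sigma$.
  \item It shows $H^1(\Gamma_K,G_{K^\sep})$ is trivial by combining additive and multiplicative Hilbert~90 in a non-abelian exact sequence.
  \item It notes that the Lang-type map $\bwp(\gamma,\alpha)=(\wps{\alpha^{p-1}}(\gamma),\alpha^{p-1})$ is surjective.
  \item It then invokes \cite[Proposition~2.4]{wildcount}. This gives a bijection between $\sigma$-conjugacy classes in $K\rtimes K^\times$ and $H^1(\Gamma_K,C_p\rtimes C_{p-1})$, via $\rho(\tau)=(\gamma,\alpha)^{-1}\tau(\gamma,\alpha)$.
\end{enumroman}

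Point (ii) then falls out of the explicit formula \eqref{sigma-conj}: the condition $(\delta,\mu).(g,a)=(g',a)$ forces $\mu^{p-1}=1$ and $g'-\mu g=\wpa(\delta)$. You get the same conclusion differently, from your conjugation formula $c\mapsto sc+(1-\chi_a)m$ together with injectivity of the connecting map.

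What each approach buys:

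\begin{enumroman}
  \item Your argument is self-contained. It uses only additive Hilbert~90 and Kummer theory, and it directly yields the exact (not merely up-to-conjugacy) fiberwise bijection \eqref{eq:fiberwise-param} that the paper uses later. The cost is doing the semidirect-product bookkeeping by hand.
  \item The paper's route packages everything into one bijection. It also settles when $[\rho_{g,a}]=[\rho_{g',a'}]$ for different $a,a'$ (namely $\sigma$-conjugacy); (iii) is just $\sigma$-conjugation by $(0,\lambda)$, since $(0,\lambda).(g,a)=(\lambda^pg,\lambda^{p-1}a)$. It also extends without change to other groups $\cG(\F_p)$ with $\cG$ built from $\bG_a$, $\bG_m$, $\GL_n$.
\end{enumroman}

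Two small points to tidy up:

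\begin{enumroman}
  \item Replacing $\alpha$ by $s\alpha$ rescales $c$ by $s^{-1}$, which is conjugation by $(0,s^{-1})$, not by $(0,s)$.
  \item The statement says the class \emph{consists of} the homomorphisms in \eqref{def-rhoga}, so you should also note that every conjugate arises from some choice. Your formula gives this at once: conjugation by $(m,s)$ is realized by the choice $(s^{-1}\alpha,\ \gamma-s^{-1}\alpha m)$.
\end{enumroman}
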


Later, whenever we discuss conjugation-invariant properties of homomorphisms, we shall freely refer to~$\rho_{g, a}$ as if it were a well-determined continuous homomorphism $\Gamma_K \rightarrow C_p \rtimes C_{p-1}$.

\begin{proof}
  Let $G := C_p \rtimes C_{p-1} \simeq \F_p \rtimes \F_p^\times$, let $G_K := K \rtimes K^\times$, and let $G_{K^\sep} := K^\sep \rtimes (K^\sep)^\times$ (for the natural action of~$\bG_m$ on~$\bG_a$), all equipped with their natural endomorphisms~$\sigma$ induced by~$x \mapsto x^p$ on both~$\bG_a$ and~$\bG_m$.
  Observe that the fixed points of~$\sigma$ are in each case exactly the elements of~$G$.
  Define an action of~$G_{K^\sep}$ on itself (the \emph{$\sigma$-conjugation action}) as follows, for any $\gamma, g \in K^\sep$ and any $\alpha, a \in (K^\sep)^\times$:
  \begin{align}
    (\gamma, \alpha).(g, a)
    &:=
    \sigma(\gamma, \alpha)
    (g, a)
    (\gamma, \alpha)^{-1}
    \notag
    \\&=
    (\gamma^p + \alpha^p g, \alpha^p a)
    (-\alpha^{-1}\gamma, \alpha^{-1})
    =
    (\gamma^p+ \alpha^p g - \alpha^{p-1} a \gamma , \alpha^{p-1} a)
    \label{sigma-conj}
  \end{align}
  and define the multiplicative Artin--Schreier map $\bwp \colon G_{K^\sep} \to G_{K^\sep}$ by
  \[
    \bwp(\gamma, \alpha)
    :=
    (\gamma, \alpha).(0,1)
    =
    (
      \gamma^p -
      \alpha^{p-1}\gamma
      , \,
      \alpha^{p-1}
    )
    =
    (
      \wps {\alpha^{p-1}}(\gamma)
      , \,
      \alpha^{p-1}
    ).
  \]
  The map~$\bwp \colon G_{K^\sep} \to G_{K^\sep}$ is surjective: we obtain a preimage~$(\gamma,\alpha)$ of any element~$(g, a) \in G_{K^\sep}$ by picking any $(p-1)$-th root~$\alpha$ of~$a$ and any $\gamma \in \wpa^{-1}(g)$.
  Moreover, $H^1(\Gamma_K, G_{K^\sep})$ is the trivial pointed set, as it fits into the exact sequence of pointed sets in non-abelian Galois cohomology
  \[
    H^1(\Gamma_K, K^\sep)
    \longrightarrow
    H^1(\Gamma_K, G_{K^\sep})
    \longrightarrow
    H^1(\Gamma_K, (K^\sep)^\times)
  \]
  in which both the left-hand side and the right-hand side vanish, respectively, by the additive and by the multiplicative versions of Hilbert's Theorem 90, cf.~\cite[Chap.~X, §1, Prop.~1 and~2]{serrecl}.
  By \cite[Proposition~2.4]{wildcount}, there is a bijection between the set of $\sigma$-conjugacy classes of~$G_K$ (orbits under $\sigma$-conjugation) and the pointed set~$H^1(\Gamma_K, G)$ of $G$-conjugacy classes of continuous group homomorphisms $\Gamma_K \to G$.
  Explicitly, for any $(g, a) \in G_K$, a corresponding homomorphism~$\rho \in \Hom(\Gamma_K, G)$ is obtained by picking some $(\gamma, \alpha) \in \bwp^{-1}(g, a)$ and by defining
  \[
    \rho(\tau) :=
    (\gamma, \alpha)^{-1} \tau(\gamma, \alpha)
    =
    (-\alpha^{-1}\gamma, \alpha^{-1})(\tau(\gamma), \tau(\alpha))
    =
    (
      -\alpha^{-1}\gamma + \alpha^{-1}\tau(\gamma), \alpha^{-1}\tau(\alpha)
    )
  \]
  which matches the definition of $[\rho_{g, a}]$ in the statement of the theorem.
  This proves the main claim and point~\ref{thm:main-param-full}.

  We now prove point~\ref{thm:main-param-unique}.
  By the bijection described above, we know that $[\rho_{g, a}] = [\rho_{g', a}]$ if and only if there exist $\delta \in K$, $\mu \in K^\times$ such that $(\delta, \mu).(g, a) = (g', a)$.
  By \Cref{sigma-conj}, this means in particular that $\mu^{p-1} = 1$, i.e., $\mu \in \F_p^\times$, and under that assumption it is equivalent to the equality $\delta^p + \mu g - a \delta = g'$, i.e., $g'-\mu g = \wpa(\delta)$.
  The claim follows.

  Finally, point~\ref{thm:main-param-change-of-a} follows from \Cref{eq:change-of-a} (if $\gamma \in \wpa^{-1}(g)$, then~$\lambda\gamma \in \wps {\lambda^{p-1}a}^{-1}(\lambda^p g)$) and from the following equalities, valid for any $\lambda \in K^\times$:
  \[
    \frac{
      \tau(\lambda\gamma)
      -
      \lambda\gamma
    }{
      \lambda \alpha
    }
    =
    \frac{
      \tau(\gamma)
      -
      \gamma
    }{
      \alpha
    }
    \qquad\qquad\text{and}\qquad\qquad
    \frac{\tau(\lambda\alpha)}{\lambda\alpha}
    =
    \frac{\tau(\alpha)}{\alpha}.
    \qedhere
  \]
\end{proof}

To sum things up: any $(C_p \rtimes C_{p-1})$-extension of~$K$ is induced by some pair~$(g,a)$ with~$g \in K$ and~$a \in K^\times$.
Ignoring surjectivity questions, we summarize this by the field diagram
\[\begin{tikzcd}
	{K(a^{\frac1{p-1}}, \wpa^{-1}(g))} \\
	& {K(a^{\frac1{p-1}})} \\
	K
	\arrow["{C_p}"', no head, from=2-2, to=1-1]
	\arrow["{C_p \rtimes C_{p-1}}", no head, from=3-1, to=1-1]
	\arrow["{C_{p-1}}"', no head, from=3-1, to=2-2]
\end{tikzcd}\]
Recall from \Cref{prop:twist-vs-non-twist} that $K(a^{\frac1{p-1}}, \wpa^{-1}(g)) = K(a^{\frac1{p-1}}, \wp^{-1}(a^{\frac{-p}{p-1}}g))$.


\begin{remark}
  We can also prove \Cref{thm:main-param} by observing that any $\chi_a \in \Hom(\Gamma_K, C_{p-1})$ admits the lift $\rho_{0,a} := s \circ \chi_a \in \Hom(\Gamma_K, C_p \rtimes C_{p-1})$, where $s \colon C_{p-1} \to C_p \rtimes C_{p-1}$ is the canonical (homomorphic) section.
  Then, all lifts of~$\chi_a$ are of the form~$\rho := \tilde \chi \cdot \rho_{0,a}$ (pointwise product) for some continuous map $\tilde\chi \colon \Gamma_K \to \F_p$ satisfying
  the cocycle condition
  $\tilde\chi(\tau_1\tau_2)
  = \tilde\chi(\tau_1) + \chi_a(\tau_1)\tilde\chi(\tau_2)$: in other words, the abelian group of $1$-cocycles (continuous crossed homomorphisms) $\alpha\tilde\chi \in Z^1(\Gamma_K, \alpha \F_p)$ acts simply transitively on the set of lifts of~$\chi_a$.
  Combined with \Cref{eq:kummer,eq:twisted-artsch}, we retrieve the parametrization of \Cref{thm:main-param}.
\end{remark}

\subsection{Fiberwise parametrization in terms of homomorphisms.}
\label{subsn:fiberwise}

Consider the canonical surjection $\pi \colon C_p \rtimes C_{p-1} \twoheadrightarrow C_{p-1}$.
Fix~$a \in K^\times$, and pick a $(p-1)$-th root~$\alpha \in (K^\sep)^\times$ of~$a$.
Then, \Cref{thm:main-param} becomes a bijection
\begin{equation}
  \label{eq:fiberwise-param}
  \begin{matrix}
    \wpa^{-1}(K)/K
    &\overset\sim\longrightarrow&
    \suchthat{
      \rho \in \Hom(\Gamma_K, C_p \rtimes C_{p-1})
    }{
      \pi\circ\rho = \chi_a
    }
    \\
    [\gamma]
    &
    \longmapsto
    &
    \displaystyle
    \tau \mapsto
    \Bigl(
      \frac
        {\tau(\gamma)-\gamma}
        \alpha
      ,\,
      \chi_a(\tau)
    \Bigr).
  \end{matrix}
\end{equation}
Note that the $\F_p$-vector space $\wpa^{-1}(K)/K$ also appears in the short exact sequence
\begin{equation}
  \label{eq:fiberwise-2}
  0
  \longrightarrow
  \alpha \F_p/(K\cap \alpha \F_p)
  \overset\subset\longrightarrow
  \wpa^{-1}(K)/K
  \overset{\wpa}\longrightarrow
  K/\wpa(K)
  \longrightarrow
  0
\end{equation}
in which $\alpha \F_p/(K\cap \alpha \F_p)$ is trivial if $a \in (K^\times)^{p-1}$, and is~$\alpha \F_p$ (of size~$p$) otherwise.
Consequently, when we count homomorphisms $\rho \in \Hom(\Gamma_K, C_p \rtimes C_{p-1})$ satisfying certain conditions by counting the corresponding elements of~$K/\wpa(K)$, we have to multiply the counts by~$p$ whenever~$a \not\in(K^\times)^{p-1}$.

\section{Twisted Artin--Schreier theory over local function fields}
\label{sn:local}

In this section, we fix a local function field~$K = \kappa\ppar\pi$ with perfect residue field~$\kappa$ of characteristic~$p$ and uniformizer~$\pi$, with ring of integers $\cO = \kappa\bbra\pi$, with maximal ideal $\fm = \pi\cO$, equipped with its (discrete) $\pi$-adic valuation~$v$.
The maximal unramified extension of~$K$ is $K^\ur := \bar\kappa\ppar\pi$.

Fix $a \in K^\times$ and let $\alpha \in (K^\sep)^\times$ be a $(p-1)$-th root of~$a$.
Let $u := v(a)$, and let $A \in \kappa$ be the coefficient of~$a$ in front of~$\pi^u$.
If~$\kappa$ is a finite field, \Cref{rk:wp-injective} implies that
\begin{equation}
  \label{eq:size-wp-finite}
  \card{\kappa/\wps A (\kappa)}
  =
  \frac{\card\kappa}{\card{\wps A (\kappa)}}
  =
  \card{\ker\wps A \cap \kappa}
  =
  \begin{cases}
    1 & \text{if } A \not\in (\kappa^\times)^{p-1} \\
    p & \text{if } A \in (\kappa^\times)^{p-1}.
  \end{cases}
\end{equation}

We define the integers $k := \floor{\frac u{p-1}}$ and $k' := \ceil{\frac u{p-1}}$.
For convenience, we write down how~$\wpa$ acts on valuations: for any $g \in K$, we have
\begin{equation}
  \label{eq:wp-val}
  v(\wpa(g))
  \begin{cases}
    = p v(g) & \text{if } v(g) < \frac u{p-1} \iff v(g) \leq k'-1\\
    = u+v(g) & \text{if } v(g) > \frac{u}{p-1} \iff v(g) \geq k + 1\\
    \geq \frac{pu}{p-1} & \text{if } v(g) = \frac{u}{p-1} \iff p-1 \mid u \text{ and } v(g)=k=k'.
  \end{cases}
\end{equation}
In the first case, we have $p \mid v(\wpa(g)) \leq p(k'-1) < u+k'-1 < \frac{pu}{p-1}$, and in the second case $v(\wpa(g)) \geq u+k + 1 > \frac{pu}{p-1}$.

\subsection{The image of~$\wpa$.}

We begin with the following valuative criterion for being in the image of~$\wpa$:

\begin{lemma}
  \label{lem:image-valuation-criterion}
  Any $g \in K$ with $v(g) > \frac{pu}{p-1}$ is in $\wpa(K)$.
\end{lemma}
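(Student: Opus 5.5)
We find a preimage $x \in K$ of $g$ by solving $x^p - ax = g$ through a contraction (fixed-point) argument in the complete field $K$. Since $v(g) > \frac{pu}{p-1}$, the natural guess is that the solution has valuation $v(x) = v(g) - u > \frac{u}{p-1}$. This is the second regime of \Cref{eq:wp-val}, where the linear term $ax$ dominates. We therefore rewrite the equation as
\[
  x = T(x) := \frac{x^p - g}{a}.
\]
We look for a fixed point of $T$ on the set $S := \{x \in K : v(x) \geq v(g) - u\}$; set $m := v(g) - u$.

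First, $T$ maps $S$ into itself. If $v(x) \geq m$, then $v(x^p) \geq pm$. Now $pm \geq m + u$ is equivalent to $(p-1)m \geq u$, i.e.\ $m \geq \frac{u}{p-1}$. This holds because $m = v(g) - u > \frac{pu}{p-1} - u = \frac{u}{p-1}$. Hence $v(x^p - g) \geq m + u$, so $v(T(x)) \geq m$.

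Second, $T$ is a strict contraction on $S$. For $x, y \in S$ we have $x^p - y^p = (x-y)^p$ in characteristic $p$. This gives
\[
  v(T(x) - T(y)) = p\,v(x-y) - u = v(x-y) + \bigl((p-1)v(x-y) - u\bigr).
\]
Since $v(x-y) \geq m > \frac{u}{p-1}$, the bracket is at least $(p-1)m - u$. This is a positive number independent of $x,y$. Moreover, iterating only increases $v(x-y)$, so the gain per step never decreases.

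Third, $S$ is closed in the complete field $K$. Hence the iterates $T^n(0)$ converge, and the limit $x$ is a fixed point of $T$. Such a fixed point satisfies $x^p - ax = g$, so $g \in \wpa(K)$. Alternatively, one can write $x = -\sum_{n \geq 0}$ of the successive corrections, or simply invoke Hensel's lemma for the polynomial $X^p - aX - g$ after rescaling $X = \pi^{m} Y$.

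The main (mild) obstacle is the bookkeeping of the inequality $m > \frac{u}{p-1}$, since $u$ may be negative. The computations above do not use any sign assumption on $u$, only the strict inequality. The degenerate case $g = 0$ is trivial.
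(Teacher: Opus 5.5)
Your proof is correct and is essentially the paper's argument. The iterates $T^n(0)$ of your map $T(x) = (x^p-g)/a$ are exactly the partial sums of the paper's series $w = \sum_{i\geq0} a^{-\frac{p^{i+1}-1}{p-1}}(-g)^{p^i}$; the paper writes this fixed point in closed form and checks $\wpa(w)=g$ by telescoping, where you instead invoke the contraction principle.
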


\begin{proof}
  Since~$K$ is ultrametric complete, the following sum converges in~$K$:
  \[
    w :=
    \sum_{i \geq 0}
      a^{-\frac{p^{i+1}-1}{p-1}}
      (-g)^{p^i}.
  \]
  Indeed, the $i$-th summand has valuation $p^i v(g) - \frac{p^{i+1} - 1}{p-1} u = \Bigl( v(g) - \frac{pu}{p-1} \Bigr) p^i + \frac{u}{p-1}$, which tends to~$+\infty$ when $i \to +\infty$ because $v(g)-\frac{pu}{p-1} > 0$.
  We have
  \begin{align*}
    \wpa(w)
    &=
    \sum_{i \geq 0}
      a^{-\frac{p^{i+2}-p}{p-1}}
      (-g)^{p^{i+1}}
    -
    a
    \sum_{i \geq 0}
      a^{-\frac{p^{i+1}-1}{p-1}}
      (-g)^{p^i}
    \\
    &=
    \sum_{i \geq 1}
      a^{-\frac{p^{i+1}-p}{p-1}}
      (-g)^{p^i}
    -
    \sum_{i \geq 0}
      a^{-\frac{p^{i+1}-1}{p-1} + 1}
      (-g)^{p^i}
    =
    g.
    \qedhere
  \end{align*}
\end{proof}

The following lemma extends \Cref{lem:image-valuation-criterion} slightly when $p-1 \mid u$:

\begin{lemma}
  \label{lem:image-finite-criterion}
  Assume that $p-1 \mid u$, so $u = k(p-1)$.
  Then, an element~$g \in K$ with $v(g) = pk$ is in~$\wpa(K)$ if and only if the coefficient~$g_{pk}$ of~$g$ in front of~$\pi^{pk}$ belongs to~$\wps A (\kappa)$.
\end{lemma}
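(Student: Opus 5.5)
We prove both implications directly, reducing to the leading coefficient via the valuation behaviour \Cref{eq:wp-val} and then absorbing the tail with \Cref{lem:image-valuation-criterion}. Throughout, $u = k(p-1)$, so $\frac{pu}{p-1} = pk$, and $a = A\pi^u(1+\varepsilon)$ with $v(\varepsilon) \geq 1$.

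For the \emph{if} direction, suppose $g_{pk} = c^p - Ac$ for some $c \in \kappa$. Set $w_0 := c\pi^k \in K$ and compute
\[
  \wpa(w_0) = c^p\pi^{pk} - A(1+\varepsilon)c\,\pi^{u+k}
  = (c^p - Ac)\pi^{pk} - A\varepsilon c\,\pi^{pk},
\]
using $u + k = pk$. Hence $v\bigl(\wpa(w_0) - g_{pk}\pi^{pk}\bigr) > pk$. Since $v(g - g_{pk}\pi^{pk}) > pk$ by definition of $g_{pk}$, we get
\[
  v\bigl(g - \wpa(w_0)\bigr) > pk = \tfrac{pu}{p-1}.
\]
By \Cref{lem:image-valuation-criterion}, $g - \wpa(w_0) \in \wpa(K)$, and since $\wpa$ is additive, $g \in \wpa(K)$.

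For the \emph{only if} direction, suppose $g = \wpa(w)$ with $w \in K$, and split into cases according to \Cref{eq:wp-val}, noting that here $k = k'$.
\begin{itemize}
  \item If $v(w) \leq k-1$, then $v(g) = p\,v(w) < pk$, a contradiction.
  \item If $v(w) \geq k+1$, then $v(g) = u + v(w) > pk$, again a contradiction.
  \item Hence $v(w) = k$. Write $w = c\pi^k + w'$ with $c \in \kappa^\times$ and $v(w') \geq k+1$. Then $\wpa(w') \equiv 0$ modulo $\pi^{pk+1}$ by the second case of \Cref{eq:wp-val}, and the computation above gives $\wpa(c\pi^k) \equiv (c^p - Ac)\pi^{pk}$ modulo $\pi^{pk+1}$.
\end{itemize}
Comparing coefficients of $\pi^{pk}$ gives $g_{pk} = c^p - Ac \in \wps A(\kappa)$.

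No step is a real obstacle. The only point needing care is the coefficient bookkeeping: the coefficient of $a$ at $\pi^u$ is $A$, and the higher-order terms of $a$ contribute only at valuation strictly greater than $u + k = pk$. The statement needs only that $\kappa$ is perfect, not finite, and the argument uses nothing more.
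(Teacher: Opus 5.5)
Your proof is correct. It differs from the paper's mainly in how the residual solution is lifted.

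The paper's proof starts with the same valuation argument forcing $v(y)=k$. It then normalizes to $\bar a := \pi^{-u}a$, $\bar g := \pi^{-pk}g$, $\bar y := \pi^{-k}y$, so that $g=\wpa(y)$ becomes $\bar g = \bar y^p - \bar a\,\bar y$ over $\cO$. Since the derivative $-\bar a$ is a unit, Hensel's lemma settles both directions at once: a solution exists if and only if one exists modulo $\pi$.

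Your ``only if'' direction is exactly this reduction modulo $\pi$, written without normalizing. For the ``if'' direction, you replace Hensel's lemma with two steps. First you lift the residual solution to $c\pi^k$. Then you absorb the error term, which has valuation $>pk=\frac{pu}{p-1}$, using \Cref{lem:image-valuation-criterion}. That lemma's explicit series is in fact the Newton iteration $w \mapsto (w^p-g)/a$ for this additive polynomial, started at $0$.

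The paper's route is shorter and does not need the preceding lemma. Yours uses only tools already established in the paper, and it makes explicit that the leading coefficient is the only obstruction, since everything of valuation $>pk$ already lies in $\wpa(K)$.
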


In particular, if~$\kappa$ is algebraically closed or if~$\kappa$ is finite and $A \not\in (\kappa^\times)^{p-1}$ (cf.~\Cref{eq:size-wp-finite}), so that $\wps A (\kappa) = \kappa$, then any $g \in K$ with $v(g) \geq pk$ is in~$\wpa(K)$.

\begin{proof}
  If $g=\wpa(y)$ for some $y \in K$, then by \Cref{eq:wp-val}, we must have $v(y) = k$.
  In terms of $\bar a := \pi^{-u} a$, $\bar g := \pi^{-pk} g$ and $\bar y := \pi^{-k} y$, which all belong to~$\kappa^\times + \pi\cO$, the equation $g=\wpa(y)$ becomes $\bar g = \bar y^p - \bar a \, \bar y$ (we have $k+u=k+(p-1)k=pk$).
  By Hensel's lemma, this separable equation in the variable~$\bar y \in \cO$ has a solution if and only if it has a solution modulo~$\pi\cO$, i.e., if and only if the coefficient $\bar g_0 = g_{pk} \in \kappa^\times$ is in~$\wpa(\kappa)$.
\end{proof}

\subsection{Local fundamental domain}
Assume that~$\kappa$ is finite.

\begin{definition}
  \label{def:local-fundom}
  Define the following $\kappa$-linear subspaces of~$K = \kappa\ppar\pi$:
  \[
    \cF_a' :=
    \bigoplus_{
        p \nmid i < \frac{pu}{p-1}
    }
        \kappa \pi^i
    \andd
    \cF_a :=
    \begin{cases}
      \cF_a'
      &
      \text{if }
      a \not\in (K^\times)^{p-1}
      \\
      \kappa \pi^{pk}
      \oplus
      \cF_a'
      &
      \text{if }
      a \in (K^\times)^{p-1}, \,
      k = \frac{u}{p-1}.
    \end{cases}
  \]
\end{definition}

\begin{theorem}
  \label{thm:local-fundom}
  ~
  \begin{enumroman}
    \item
      \label{thm:local-fundom:full}
      Every $\wpa(K)$-coset in~$K$ intersects~$\cF_a$
    \item
      \label{thm:local-fundom:unique}
      We have
      \[
        \wpa(K) \cap \cF_a =
        \begin{cases}
          0 &
          \text{if }
          a \not\in (K^\times)^{p-1}
          \\
          \wps A (\kappa) \pi^{pk}
          &
          \text{if }
          a \in (K^\times)^{p-1}
        \end{cases}
        \andd
        \wpa(K)
        \cap
        \cF'_a
        =
        0.
      \]
    \item
      \label{thm:local-fundom:is-fundom}
      The map $\cF_a \overset\subseteq\to K \twoheadrightarrow K/\wpa(K)$ is an $\F_p$-linear isomorphism if $a \not\in (K^\times)^{p-1}$, and otherwise is a surjection with kernel $\wps A (\kappa) \pi^{pk}$.
      This can be summarized as
      \[
        \bigslant K {\wpa(K)}
        \simeq
        \begin{cases}
          \cF_a \simeq \cF_a'
          &
          \text{if }
          a \not\in (K^\times)^{p-1}
          \\
          \Bigl(
            \bigslant\kappa{\wps A (\kappa)}
          \Bigr)
          \pi^{pk}
          \oplus \cF_a'
          &
          \text{if }
          a \in (K^\times)^{p-1}.
        \end{cases}
      \]
  \end{enumroman}  
\end{theorem}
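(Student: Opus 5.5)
The plan is to prove \ref{thm:local-fundom:full} by a reduction algorithm on the terms of~$g$ from the bottom up, to prove \ref{thm:local-fundom:unique} with the valuation table \Cref{eq:wp-val}, and to deduce \ref{thm:local-fundom:is-fundom} formally from the first two.

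For \ref{thm:local-fundom:full}, fix $g \in K$. By \Cref{lem:image-valuation-criterion}, the tail of~$g$ of valuation $> \frac{pu}{p-1}$ lies in~$\wpa(K)$, so we may assume that $g$ is a finite sum of monomials $c\pi^i$ with $i \leq \frac{pu}{p-1}$. I would then remove the ``bad'' monomials by decreasing induction on their number, always treating the one of smallest exponent.

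\begin{itemize}
  \item Suppose the smallest exponent $i$ is divisible by~$p$, with $i = pj < \frac{pu}{p-1}$, so $j \leq k'-1$. Since $\kappa$ is perfect, write $c = b^p$ and subtract $\wpa(b\pi^j) = b^p\pi^{pj} - ab\pi^j$. This kills $c\pi^i$ and creates $-ab\pi^j$ plus terms from~$a$. These new terms have valuations $\geq u+j$, and $u + j > pj$ because $j < \frac{u}{p-1}$.
  \item Therefore the lowest bad exponent strictly increases. The relevant exponents are bounded above by $\frac{pu}{p-1}$, and tails beyond that bound are again absorbed by \Cref{lem:image-valuation-criterion}, so the process terminates.
  \item The remaining boundary case is $i = \frac{pu}{p-1} = pk$, which only occurs when $p-1 \mid u$.
    \begin{itemize}
      \item If $a \notin (K^\times)^{p-1}$, then $A \notin (\kappa^\times)^{p-1}$ by \Cref{lem:power-criterion-leadcoeff}. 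Hence $\wps A(\kappa) = \kappa$, and the remark after \Cref{lem:image-finite-criterion} says that everything of valuation $\geq pk$ is in~$\wpa(K)$.
      \item If $a \in (K^\times)^{p-1}$, the term $c\pi^{pk}$ is allowed in~$\cF_a$, and the tail above it is absorbed.
    \end{itemize}
\end{itemize}

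For \ref{thm:local-fundom:unique}, let $y \in K^\times$ with $\wpa(y) \in \cF_a$, and split into cases by~\Cref{eq:wp-val}.

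\begin{itemize}
  \item If $v(y) < \frac{u}{p-1}$, then $v(\wpa(y)) = pv(y)$, which is divisible by~$p$ and less than $\frac{pu}{p-1}$. The leading monomial $y_{v(y)}^p\pi^{pv(y)}$ then has a nonzero coefficient at an exponent excluded from~$\cF_a$, a contradiction.
  \item If $v(y) > \frac{u}{p-1}$, then $v(\wpa(y)) > \frac{pu}{p-1}$, while all nonzero elements of~$\cF_a$ have valuation $\leq \frac{pu}{p-1}$. This is again a contradiction.
  \item If $v(y) = \frac{u}{p-1}$, then $p-1 \mid u$ and $v(\wpa(y)) \geq pk$. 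Lying in~$\cF_a$ forces $\wpa(y) = c\pi^{pk}$ with $c \in \kappa$; in particular $\wpa(y) \notin \cF'_a \setminus \{0\}$. When $a \in (K^\times)^{p-1}$, \Cref{lem:image-finite-criterion} shows exactly that $c \in \wps A(\kappa)$. When $a \notin (K^\times)^{p-1}$, the term $\kappa\pi^{pk}$ is not part of~$\cF_a$, so the intersection is~$0$.
\end{itemize}

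This case analysis gives both identities. Point \ref{thm:local-fundom:is-fundom} is then immediate: surjectivity is \ref{thm:local-fundom:full}, and the kernel is $\wpa(K) \cap \cF_a$ from \ref{thm:local-fundom:unique}.

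The main obstacle is making the reduction in \ref{thm:local-fundom:full} rigorous when $g$ has infinitely many terms. I would handle this by truncating~$g$ at valuation $\frac{pu}{p-1}$ first, so that only finitely many monomials need reducing. After each step, the newly created terms above that bound are absorbed again, and the lowest bad exponent strictly increases while staying below $\frac{pu}{p-1}$, which guarantees termination.
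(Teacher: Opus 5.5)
Your proposal is correct and follows the paper's proof essentially step by step. It uses the same reduction $g \mapsto g - \wpa(b\pi^j)$ with $b^p = c$ for (i), the same three-case valuation analysis via \Cref{eq:wp-val} and \Cref{lem:image-finite-criterion} for (ii), and the same formal deduction of (iii). The only differences are in how (i) is organized: the paper runs a decreasing induction on $v(g)$ and applies the hypothesis to the whole remainder, which avoids the truncation bookkeeping you flagged, and your termination measure should be the lowest bad exponent, as you eventually say, rather than the number of bad monomials, which can grow.
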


\begin{proof}
  To prove~\ref{thm:local-fundom:full}, we prove by decreasing induction on the valuation~$v(g)$ that any $g \in K$ lies in $\cF_a + \wpa(K)$.
  First, by \Cref{lem:image-valuation-criterion}, if $v(g) > \frac{pu}{p-1}$, then $g \in \wpa(K) \subseteq \cF_a + \wpa(K)$.
  Now, assume that $v(g) \leq \frac{pu}{p-1}$ and that any~$h$ with $v(h) > v(g)$ lies in $\cF_a + \wpa(K)$, and write $g = g_{v(g)} \pi^{v(g)} + \tilde g$ where $g_{v(g)} \in \kappa^\times$ and $v(\tilde g) > v(g)$.
  If $v(g) = \frac{pu}{p-1}$ and $a \not\in (K^\times)^{p-1}$, then by \Cref{lem:image-finite-criterion} we have $g \in \wpa(K) \subseteq \cF_a + \wpa(K)$.
  If $p \nmid v(g)$, or if $v(g) = \frac{pu}{p-1}$ and $a \in (K^\times)^{p-1}$, then $g_{v(g)} \pi^{v(g)} \in \cF_a$, and $\tilde g \in \cF_a + \wpa(K)$ by the induction hypothesis, so $g \in \cF_a + \wpa(K)$.
  Assume now that $p \mid v(g) < \frac{pu}{p-1}$, and write $v(g) = pi$ with $i < \frac{u}{p-1}$.
  We have
  \[
    g_{v(g)} \pi^{v(g)}
    =
    \bigl(
      \sigma^{-1}(g_{v(g)}) \pi^i
    \bigr)^p
    =
    \wpa\bigl(
      \sigma^{-1}(g_{v(g)}) \pi^i
    \bigr)
    +
    a \sigma^{-1}(g_{v(g)}) \pi^i
  \]
  and
  $
    v\bigl(
      a \sigma^{-1}(g_{v(g)}) \pi^i
    \bigr)
    =
    u + i
    >
    (p-1)i+i
    =
    pi
    =v(g)
  $, so
  \[
    g
    =
    \underbrace{
      \wpa\bigl(
        \sigma^{-1}(g_{v(g)}) \pi^i
      \bigr)
    }_{\in \wpa(K)}
    +
    \underbrace{
      a \sigma^{-1}(g_{v(g)}) \pi^i
      +
      \tilde g
    }_{\in \cF_a + \wpa(K) \text{ by the induction hypothesis}}
    \in \cF_a + \wpa(K).
  \]

  To prove~\ref{thm:local-fundom:unique}, assume that $x \in K^\times$ is such that $\wpa(x) \in \cF_a \setminus \{0\}$.
  If $v(x) < \frac{u}{p-1}$, then by \Cref{eq:wp-val} $v(\wpa(x)) = pv(x) < \frac{pu}{p-1}$ is divisible by~$p$, which is impossible for elements of~$\cF_a$.
  If $v(x) > \frac{u}{p-1}$, then $v(\wpa(x)) = u + v(x) > \frac{pu}{p-1}$ which is again impossible for non-zero elements of~$\cF_a$.
  Therefore, $v(x) = \frac{u}{p-1}$, so $p-1 \mid u$ and $v(x) = k$.
  The inequality $v(\wpa(x)) \geq pk$ with $\wpa(x) \in \cF_a \setminus \{0\}$ implies that $\wpa(x) \in \kappa^\times \pi^{pk}$, which is only possible if~$a \in (K^\times)^{p-1}$, in which case $\kappa^\times \pi^{pk} \subseteq \cF_a$.
  Finally, by \Cref{lem:image-finite-criterion}, we have $\kappa \pi^{pk} \cap \wpa(K) = \wps A (\kappa) \pi^{pk}$.
  That $\wpa(K) \cap \cF'_a = 0$ easily follows.

  Finally, \ref{thm:local-fundom:is-fundom} is a direct consequence of \ref{thm:local-fundom:full} and \ref{thm:local-fundom:unique}.
\end{proof}

\subsection{Computation of discriminants}

By \Cref{prop:kummer-inertia}, the extension~$K(\alpha)|K$ has ramification index
\begin{equation}
  \label{eq:def-e}
  e := \frac{p-1}{\gcd(p-1, \, u)}.
\end{equation}
Therefore, $v_{K(\alpha)} = e v_K$, and
\begin{equation}
  \label{eq:local-kummer-disc}
  \delta(\chi_a)
  =
  (p-1)\biggl(
    1-\frac1e
  \biggr)
  =
  p-1-
  \gcd\bigl(
    p-1 , \, u
  \bigr)
  \qquad
  \in\{0,\ldots,p-2\}.
\end{equation}

\begin{definition}
  \label{def:local-reldisc-tasty}
  For any $g \in K$, we define
  $
    \delta^{(a)}(g)
    :=
    \delta^\rel(\rho_{g,a})
  $
  (cf.~\Cref{def:disc-rel,thm:main-param}),
  so that $\delta(\rho_{g,a}) = p\delta(\chi_a) + (p-1)\delta^{(a)}(g)$.
\end{definition}

\begin{lemma}
  \label{lem:disc-lj}
  For any $g \in K$, we have
  \[
    \delta^{(a)}(g)
    =
    \begin{cases}
      0
      & \text{if }
      \lastjump \rho_{g,a} = 0 \\
      (p-1)
      \bigl(
        \lastjump \rho_{g,a}
        + \frac1e
      \bigr)
      =
      (p-1)
      \bigl(
        \lastjump \rho_{g,a}
        + 1
      \bigr)
      - \delta(\chi_a)
      & \text{if }
      \lastjump \rho_{g,a} > 0
    \end{cases}.
  \]
\end{lemma}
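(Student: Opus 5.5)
We compute $\delta^{(a)}(g)$ directly from the integral formula of \Cref{def:disc-rel}, with $N = C_p$ and $Q = C_{p-1}$:
\[
  \delta^{(a)}(g) = p \int_{-1}^{+\infty} \Bigl( \frac1{\card{\chi_a(\Gamma_K^s)}} - \frac1{\card{\rho(\Gamma_K^s)}} \Bigr) \dd s,
\]
where $\rho := \rho_{g,a}$. The integrand vanishes whenever $\rho(\Gamma_K^s)$ meets $C_p$ trivially. The whole proof is therefore a matter of describing $\rho(\Gamma_K^s)$ in the three ranges $s \in (-1,0]$, $s \in (0, \lastjump\rho)$, and $s > \lastjump\rho$.

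\emph{Structure of the images.} For $s > 0$, the group $\Gamma_K^s$ is pro-$p$, so $\rho(\Gamma_K^s)$ lies in the unique $p$-Sylow subgroup $C_p$. It is therefore either $C_p$ or trivial, and by the definition of the last jump (with the filtration non-increasing) it is $C_p$ for $0 < s < \lastjump\rho$ and trivial for $s > \lastjump\rho$. Also $\chi_a(\Gamma_K^s) = 1$ for $s > 0$, since $\chi_a$ has order prime to $p$ and so kills wild inertia. For $s \in (-1,0]$, the image $\chi_a(\Gamma_K^0)$ has order $e$ by \Cref{prop:kummer-inertia} and \Cref{eq:def-e}. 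Meanwhile $\rho(\Gamma_K^0)$ contains $\rho(\Gamma_K^1)$, which is $C_p$ if $\lastjump\rho > 0$ and trivial otherwise. Since $\rho(\Gamma_K^0)/(\rho(\Gamma_K^0)\cap C_p) \simeq \chi_a(\Gamma_K^0)$, we get $\card{\rho(\Gamma_K^0)} = pe$ or $e$ accordingly.

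\emph{The delicate case.} When $\lastjump\rho = 0$, I need $\rho(\Gamma_K^0) \cap C_p = 1$, so that $\card{\rho(\Gamma_K^0)} = e$. Because $\lastjump\rho = 0$, $\rho$ is tamely ramified, so $\rho(\Gamma_K^0)$ is a quotient of the tame inertia group. Tame inertia is procyclic with pro-order prime to $p$, so $\rho(\Gamma_K^0)$ is cyclic of order prime to $p$ and cannot contain $C_p$. This is the only step requiring more than bookkeeping, and I expect it to be the main (mild) obstacle.

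\emph{Evaluating the integral.} If $\lastjump\rho = 0$, the integrand vanishes on $(-1,0]$ and on $(0,\infty)$, so $\delta^{(a)}(g) = 0$. If $\lastjump\rho > 0$, the integral on $(-1,0]$ contributes $p\bigl(\frac1e - \frac1{pe}\bigr) = \frac{p-1}{e}$. The interval $(0,\lastjump\rho)$ contributes $p\bigl(1-\frac1p\bigr)\lastjump\rho = (p-1)\lastjump\rho$, and the range $s > \lastjump\rho$ contributes nothing. Summing gives
\[
  \delta^{(a)}(g) = (p-1)\Bigl(\lastjump\rho + \frac1e\Bigr).
\]
Finally, \Cref{eq:local-kummer-disc} gives $(p-1)/e = p-1-\delta(\chi_a)$, which yields the second expression in the statement.
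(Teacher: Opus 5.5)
Your proof is correct and follows the paper's argument: both compute the integral defining $\delta^\rel(\rho_{g,a})$ by determining $\card{\rho_{g,a}(\Gamma_K^v)}$ on $(-1,0]$, on $(0,\lastjump\rho_{g,a})$, and beyond the last jump. The only differences are that you start from the second integral form in \Cref{def:disc-rel} rather than the first, and that you justify $\card{\rho_{g,a}(\Gamma_K^0)} = e$ in the tame case (tame inertia is procyclic of pro-order prime to $p$) and $\card{\rho_{g,a}(\Gamma_K^0)} = pe$ in the wild case, both of which the paper simply asserts.
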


\begin{proof}
  By definition:
  \begin{align*}
    \delta^{(a)}(g) & =
    p
    \int_{-1}^{+\infty}
      \leftl(
        1 - \frac1{\card{\rho_{g,a}(\Gamma_K^v)}}
      \rightr)
      \dd v
      -
      p
      \Bigl(
        1-\frac1e
      \Bigr).
  \end{align*}
  If $\lastjump \rho_{g,a} = 0$, then~$\rho_{g,a}$ is tamely ramified, so $\card{\rho_{g,a}(\Gamma_K^v)}$ equals~$e$ for $v \in (-1,0]$ and~$1$ for~$v > 0$, and then $\delta^{(a)}(g) = 0$.
  Assume now that $\lastjump \rho_{g,a} > 0$, so
  \[
    \card{\rho_{g,a}(\Gamma_K^v)}
    =
    \begin{cases}
      ep & \text{if } -1 < v \leq 0 \\
      p & \text{if } 0 < v \leq \lastjump \rho_{g,a} \\
      1 & \text{if } v > \lastjump \rho_{g,a}
    \end{cases}
  \]
  whence
  \begin{align*}
    \delta^{(a)}(g)
    &=
    p
    \left(
      1
      \cdot
      \Bigl(
        1-\frac1{ep}
      \Bigr)
      +
      \lastjump \rho_{g,a}
      \cdot
      \Bigl(
        1-\frac1p
      \Bigr)
      -
      \Bigl(
        1-\frac1e
      \Bigr)
    \right)
    \\
    &=
    p
    \left(
      \lastjump \rho_{g,a}
      +
      \frac1e
    \right)
    \left(
      1-\frac1p
    \right)
    =
    (p-1)
    \left(
      \lastjump \rho_{g,a}
      +
      \frac1e
    \right).
  \end{align*}
  The remaining equality comes from the fact that $\frac{p-1}e = p-1-\delta(\chi_a)$, cf.~\Cref{eq:local-kummer-disc}.
\end{proof}

We will need the following classical fact about the ramification filtration in the upper numbering, for which we did not find a direct reference:

\begin{lemma}
  \label{lem:subgroup-upnum}
  For any $v > 0$, we have $\Gamma_K^v = \Gamma_{K(\alpha)}^{ev}$.
\end{lemma}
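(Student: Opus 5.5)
The plan is to reduce the claim to the standard compatibility of upper ramification groups with passage to a tamely ramified subextension, by working at finite level. Set $L := K(\alpha)$, so that $L|K$ is a finite Galois extension with ramification index~$e$, tamely ramified because $e \mid p-1$ is prime to~$p$. Since $\Gamma_K^v = \varprojlim_M \Gal(M|K)^v$ over finite Galois extensions $M|K$ (Herbrand's theorem guarantees compatibility of upper numbering with quotients), and the finite Galois $M|K$ containing~$L$ are cofinal, it suffices to show $\Gal(M|K)^v = \Gal(M|L)^{ev}$ for every such~$M$ and every $v>0$. Note that both sides lie in $\Gal(M|L)$: for $v>0$, $\Gal(M|K)^v$ sits inside the wild inertia, which is a $p$-group, and its image in $\Gal(L|K)$, of order prime to~$p$, is trivial.

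For the finite-level statement, put $H := \Gal(M|L) \subseteq G := \Gal(M|K)$. The lower numbering is compatible with subgroups: $H_t = G_t \cap H$ by \cite[Chap.~IV, §1, Prop.~2]{serrecl}. For $t>0$ we have $G_t \subseteq G_1 \subseteq H$, hence $H_t = G_t$ for all $t > 0$, and also $G_0 \cap H = H_0$. Next I compare Herbrand functions. Since $\varphi_{M|K}(t) = \int_0^t \frac{\dd s}{[G_0:G_s]}$, for $t>0$ we have
\[
  [G_0:G_s] = [G_0:H_0]\,[H_0:H_s] = e\,[H_0:H_s],
\]
where $[G_0:H_0] = e$ because $G_0H/H$ is the inertia group of $L|K$, which has order~$e$. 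It follows that $\varphi_{M|K}(t) = \frac1e \varphi_{M|L}(t)$ for $t \geq 0$. Therefore
\[
  G^v = G_{\psi_{M|K}(v)} = H_{\psi_{M|K}(v)} = H_{\psi_{M|L}(ev)} = H^{ev}.
\]
Here $\psi_{M|K}(v) = \psi_{M|L}(ev)$ follows from the scaling relation between the $\varphi$'s, and $\psi_{M|K}(v)>0$ for $v>0$.

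Passing to the limit over~$M$ then gives $\Gamma_K^v = \Gamma_L^{ev}$. No step seems a real obstacle. The only points needing care are the profinite limit argument, namely that taking inverse limits commutes with the identifications and that $\Gamma_L^{w} = \varprojlim \Gal(M|L)^{w}$ over the same cofinal system, and the identification $[G_0:H_0]=e$. The second is the standard fact that inertia subgroups map onto inertia subgroups in quotients.
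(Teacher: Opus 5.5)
Your proposal is correct and follows essentially the same route as the paper. Both arguments reduce to finite Galois $M \supseteq K(\alpha)$, use tameness to get $\Gal(M|K)^v \subseteq \Gal(M|K(\alpha))$, and use $H_t = G_t \cap H$ for the lower numbering. The only difference is in the Herbrand-function step: the paper invokes the transitivity $\psi_{M|K} = \psi_{M|K(\alpha)} \circ \psi_{K(\alpha)|K}$ (Serre, Chap.~IV, \S3, Prop.~15) together with $\psi_{K(\alpha)|K}(v) = ev$, whereas you prove the needed tame special case $\varphi_{M|K} = \frac1e \varphi_{M|K(\alpha)}$ on $[0,+\infty)$ directly from $[G_0:G_s] = e\,[H_0:H_s]$.
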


\begin{proof}
  It suffices to prove that for any finite Galois extension~$L|K$ containing~$K(\alpha)$ we have $\Gal(L|K)^v = \Gal(L|K(\alpha))^{ev}$.
  Since~$K(\alpha)|K$ is at most tamely ramified and $v > 0$, we know that $\Gal(L|K)^v \subseteq \Gal(L|K(\alpha))$, so
  \begin{align*}
    \Gal(L|K)^v
    & =
    \Gal(L|K)^v \cap \Gal(L|K(\alpha))
    \tag*{as $\Gal(L|K)^v \subseteq \Gal(L|K(\alpha))$}
    \\
    & =
    \Gal(L|K)_{\psi_{L|K}(v)} \cap \Gal(L|K(\alpha))
    \tag*{by definition, cf.~\cite[Chap.~IV, §3]{serrecl}}
    \\
    & =
    \Gal(L|K(\alpha))_{\psi_{L|K}(v)}
    \tag*{\cite[Chap.~IV, §1, Prop.~2]{serrecl}}
    \\
    & =
    \Gal(L|K(\alpha))_{\psi_{L|K(\alpha)}(\psi_{K(\alpha)|K}(v))}
    \tag*{\cite[Chap.~IV, §3, Prop.~15]{serrecl}}
    \\
    & =
    \Gal(L|K(\alpha))^{\psi_{K(\alpha)|K}(v)}
    \tag*{by definition.}
  \end{align*}
  The size of~$\chi_a(\Gamma_K^s)$ is~$e$ if~$s = 0$, and it is~$1$ if~$s > 0$ because~$K(\alpha)|K$ is tamely ramified.
  Hence:
  \[
    \psi_{K(\alpha)|K}(v)
    =
    \int_0^v
      \frac{
        \card{\chi_a(\Gamma_K^0)}
      }{
        \card{\chi_a(\Gamma_K^s)}
      }
      \dd s
    =
    \int_0^v
      e
      \dd s
    =
    ev.
    \qedhere
  \]
\end{proof}

\begin{theorem}[Discriminants]
  \label{thm:local-disc}
  Let $g \in K$.
  \begin{enumroman}
    \item
      \label{thm:local-disc:ultrametric}
      For any $h \in K$, we have the ``ultrametric inequality''
      \[
        \delta^{(a)}(g+h)
        \leq
        \max\Bigl(
          \delta^{(a)}(g), \,
          \delta^{(a)}(h)
        \Bigr)
      \]
      with equality when $\delta^{(a)}(g) \neq \delta^{(a)}(h)$, and similarly for $\delta(\rho_{g,a})$.
    \item
      \label{thm:local-disc:unramified-twist}
      If $h \in K$ is such that $\delta^{(a)}(h) = 0$, then $\delta^{(a)}(g) = \delta^{(a)}(g+h)$.
    \item
      \label{thm:local-disc:unramified-criterion}
      If $v(g) \geq \frac{pu}{p-1}$, then $\delta^{(a)}(g) = 0$.
    \item
      \label{thm:local-disc:unramified-converse}
      If $\delta^{(a)}(g) = 0$ and $p \nmid v(g)$, then $v(g) > \frac{pu}{p-1}$.
    \item
      \label{thm:local-disc:formula}
      We have the inequality
      \[
        \delta^{(a)}(g)
        \leq
        \max\Bigl(0, \,
          p u
          +
          (p-1)
          \bigl(
            1-v(g)
          \bigr)
          - \delta(\chi_a)
        \Bigr)
      \]
      with equality if and only if $p \nmid v(g) < \frac{pu}{p-1}$, or $v(g) > \frac{pu}{p-1}$.
    \item
      \label{thm:local-disc:formula-bis}
      If $p \nmid v(g)$ or if $v(g) \geq \frac{pu}{p-1}$ (e.g., if $g \in \cF_a$), then
      \begin{align*}
        \delta^{(a)}(g) & =
        \leftl\lbrace
        \begin{array}{l}
          0 \\
          pu
          +
          (p-1)
          \bigl(
            1-v(g)
          \bigr)
          - \delta(\chi_a)
        \end{array}
        \rightr.
        &
        \begin{array}{l}
          \text{if } v(g) \geq \frac{pu}{p-1} \\
          \text{if } p \nmid v(g) < \frac{pu}{p-1}
        \end{array}
        \\
        \delta(\rho_{g,a}) & =
        \leftl\lbrace
        \begin{array}{l}
          p\delta(\chi_a) \\
          (p-1)pu
          +
          (p-1)^2
          \bigl(
            1-v(g)
          \bigr)
          + \delta(\chi_a)
        \end{array}
        \rightr.
        &
        \begin{array}{l}
          \text{if } v(g) \geq \frac{pu}{p-1} \\
          \text{if } p \nmid v(g) < \frac{pu}{p-1}.
        \end{array}
      \end{align*}
  \end{enumroman}
\end{theorem}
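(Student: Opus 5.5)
Everything reduces to the last jump via \Cref{lem:disc-lj}: $\delta^{(a)}(g)$ is a non-decreasing function of $\lastjump\rho_{g,a}$, namely $0$ at $0$ and $(p-1)(\lastjump+\frac1e)$ otherwise. So it suffices to control $\lastjump\rho_{g,a}$. For that I pass to $L := K(\alpha)$, which has valuation $v_L = e v$. By \Cref{prop:twist-vs-non-twist}, the restriction of $\rho_{g,a}$ to $\Gamma_L$ has image in $C_p$. Up to the identification $\alpha\F_p\simeq\F_p$, it is the ordinary Artin--Schreier character of $g' := \alpha^{-p}g \in L$. For $v>0$ we have $\Gamma_K^v\subseteq\Gamma_L$, so \Cref{lem:subgroup-upnum} gives $\rho_{g,a}(\Gamma_K^v)=\rho_{g,a}(\Gamma_L^{ev})$. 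Hence $\lastjump\rho_{g,a} = \frac1e \lastjump_L(g')$, where $\lastjump_L$ is the classical Artin--Schreier conductor over $L$. Classically (Serre, Local Fields, or the reduction argument), $\lastjump_L(x) = \max(0,-v_L(x))$ when $p\nmid v_L(x)<0$. In general $\lastjump_L(x)\le\max(0,-v_L(x))$, and $\lastjump_L(x)=0$ whenever $v_L(x)\ge 0$. Also $\lastjump_L$ satisfies the ultrametric inequality, since $x\mapsto$ character is additive and $\Gamma_L^w$-triviality is a subgroup condition.

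\textbf{(i)–(iv).} For (i), the map $g\mapsto\rho_{g,a}|_{\Gamma_L}$ is additive, because $\wpa$ is $\F_p$-linear. The kernel of a sum contains the intersection of the kernels, so $\lastjump(g+h)\le\max$, with equality when the two values differ (apply the inequality to $g=(g+h)-h$). Transport this through the monotone map of \Cref{lem:disc-lj}. The statement for $\delta(\rho_{g,a})$ then follows from \Cref{def:local-reldisc-tasty}, since $\delta(\chi_a)$ is fixed. (ii) is immediate from (i): apply (i) to the sum and to $g=(g+h)-h$. For (iii), compute $v_L(g') = e(v(g) - \frac{pu}{p-1})\ge0$, so the jump is $0$. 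For (iv), suppose $p\nmid v(g)<\frac{pu}{p-1}$; the goal is to show $\delta^{(a)}(g)>0$. Here $v_L(g')=e v(g)-\frac{epu}{p-1}$ is negative. Its divisibility by $p$ is the subtle point: $e$ is prime to $p$, and $\frac{epu}{p-1}=p\cdot\frac{eu}{p-1}$ with $\frac{eu}{p-1}\in\Z$. So $v_L(g')\equiv e v(g)\not\equiv0 \pmod p$, the jump is positive, and the contrapositive gives (iv). The case $v(g)=\frac{pu}{p-1}$ cannot occur under $p\nmid v(g)$, since that value is divisible by $p$ when it is an integer.

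\textbf{(v)–(vi).} By the above, $\lastjump\rho_{g,a}\le \frac1e\max(0,-v_L(g'))=\max(0,\frac{pu}{p-1}-v(g))$. Plugging this into \Cref{lem:disc-lj} gives
\[
(p-1)\Bigl(\tfrac{pu}{p-1}-v(g)+1\Bigr)-\delta(\chi_a) = pu+(p-1)(1-v(g))-\delta(\chi_a)
\]
when the jump is positive. When the jump is $0$, the bound $0$ is within the max. Equality holds exactly when $p\nmid v_L(g')<0$, i.e.\ $p\nmid v(g)<\frac{pu}{p-1}$, or when both sides are $0$.

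The main obstacle is checking that the equality cases are exactly those listed. The case $v(g)>\frac{pu}{p-1}$ gives $0=\max(0,\text{negative})$, using $\delta(\chi_a)\le p-2$. At $v(g)=\frac{pu}{p-1}$ the right side equals $p-1-\delta(\chi_a)>0$ while the left side is $0$ by (iii), so equality fails. When $p\mid v(g)<\frac{pu}{p-1}$, I would argue as in the proof of \Cref{thm:local-fundom}: subtracting $\wpa(\sigma^{-1}(g_{v(g)})\pi^i)$ does not change $\rho_{g,a}$, and strictly raises the valuation of the leading term. Iterating and using (i), the jump becomes strictly smaller than the bound, so the inequality is strict there.

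Finally, (vi) follows by combining (iii) with the equality case of (v). The $\delta(\rho_{g,a})$ formula is then $p\delta(\chi_a)+(p-1)\delta^{(a)}(g)$, simplified using $p\delta(\chi_a)-(p-1)\delta(\chi_a)=\delta(\chi_a)$.
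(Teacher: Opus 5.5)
Your route is the paper's: reduce to the last jump via \Cref{lem:disc-lj}, pass to $L=K(\alpha)$ using \Cref{lem:subgroup-upnum}, identify $\rho_{g,a}|_{\Gamma_L}$ with the Artin--Schreier character of $\alpha^{-p}g$, and combine Hasse's bound with the observation that $p\mid v_L(\alpha^{-p}g)$ if and only if $p\mid v(g)$. Points (i)--(iv) and (vi) are fine, and so are your equality and strictness checks at $v(g)=\frac{pu}{p-1}$ and for $p\nmid v(g)<\frac{pu}{p-1}$.

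In the case $p\mid v(g)<\frac{pu}{p-1}$ you depart from the paper, which invokes the ``equality iff'' clause of Hasse's formula; you use the reduction step from \Cref{thm:local-fundom} instead. This works, and one step already suffices, with no iteration and no use of (i). Write $v(g)=pi$, pick $c^p=g_{pi}$ and set $g_1:=g-\wpa(c\pi^i)$. Then $[\rho_{g_1,a}]=[\rho_{g,a}]$ and $v(g_1)>v(g)$, so $\lastjump\rho_{g,a}\le\max\bigl(0,\frac{pu}{p-1}-v(g_1)\bigr)<\frac{pu}{p-1}-v(g)$.

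\textbf{The gap: the equality case $v(g)>\frac{pu}{p-1}$.} There the left side is $0$ by (iii), so you must show
$E:=pu+(p-1)(1-v(g))-\delta(\chi_a)\le 0$.
That requires a \emph{lower} bound on $\delta(\chi_a)$; the bound $\delta(\chi_a)\le p-2$ you cite points the wrong way and proves nothing.

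A crude estimate does not rescue it either. Set $k=\floor{\frac u{p-1}}$ and $r:=u-(p-1)k\in\{0,\dots,p-2\}$. The hypothesis $v(g)>\frac{pu}{p-1}$ only gives $v(g)\ge u+k+1$, hence $E\le r-\delta(\chi_a)$. This would be positive if $r>0$ and $\delta(\chi_a)$ were small, so the claim genuinely depends on the size of $\delta(\chi_a)$.

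What you need is the exact value $\delta(\chi_a)=p-1-\gcd(p-1,u)$ from \eqref{eq:local-kummer-disc}, together with $\gcd(p-1,u)=\gcd(p-1,r)$:
\begin{itemize}
\item if $r=0$, then $\delta(\chi_a)=0$ and $E\le 0$;
\item if $r\neq0$, then $E\le \gcd(r,p-1)-(p-1-r)\le 0$, because $\gcd(r,p-1)=\gcd(p-1-r,p-1)$ divides the positive integer $p-1-r$.
\end{itemize}
This is exactly the gcd computation in the paper's proof.

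Note also that $E$ can equal $0$, for instance when $p-1\mid u$ and $v(g)=\frac{pu}{p-1}+1$, or when $p=5$, $u=2$, $v(g)=3$. So ``negative'' should read ``non-positive''.
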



\begin{proof}
  To prove~\ref{thm:local-disc:ultrametric}, by \Cref{lem:disc-lj}, it suffices to prove the ultrametric inequality for the last jump.
  Fix elements $\gamma \in \wpa^{-1}(g)$ and $\gamma' \in \wpa^{-1}(h)$.
  We have $\wpa(\gamma+\gamma') = g+h$, so for any $v > 0$ we have (cf.~\Cref{def-rhoga}, and recall that~$\chi_a$ is at most tamely ramified)
  \[
    \lastjump \rho_{g+h, a} < v
    \iff
    \forall \tau \in \Gamma_K^v, \;
    \tau(\gamma+\gamma') = \gamma+\gamma'.
  \]
  If $\lastjump \rho_{g, a} < v$ and $\lastjump \rho_{h, a} < v$, then for all $\tau \in \Gamma_K^v$ we have $\tau(\gamma)=\gamma$ and $\tau(\gamma') = \gamma'$, so $\tau(\gamma+\gamma') = \gamma+\gamma'$ and $\lastjump \rho_{g+h, a} < v$, proving the inequality.
  Now, assume that $\lastjump \rho_{g, a} \neq \lastjump \rho_{h, a}$, for example that $\lastjump \rho_{g, a} < \lastjump \rho_{h, a}$, and assume by contradiction that $\lastjump \rho_{g+h,a} < \lastjump \rho_{h, a}$.
  By \iref{thm:main-param}{thm:main-param-unique}, we have $\rho_{-g,a} = \rho_{g, a}$, so using the ultrametric inequality we obtain the contradictory inequality
  \begin{align*}
    \lastjump \rho_{h,a}
    & =
    \lastjump \rho_{(g+h)+(-g), a}
    \\
    &
    \leq
    \max\Bigl(
      \lastjump \rho_{g+h, a}, \,
      \lastjump \rho_{g, a}
    \Bigr)
    <
    \lastjump \rho_{h,a}.
  \end{align*}
  We have proved~\ref{thm:local-disc:ultrametric}.
  Point~\ref{thm:local-disc:unramified-twist} follows from~\ref{thm:local-disc:ultrametric} and from the bounds $\delta \geq 0$, $\delta^{(a)} \geq 0$.

  Fix $\gamma \in \wpa^{-1}(g)$, and define as in \Cref{def-rhoga} the continuous group homomorphism
  \[
    \begin{matrix}
      \rho_{g,a} \colon &
      \Gamma_K &
      \longrightarrow &
      C_p \rtimes C_{p-1}
      \\ &
      \tau &
      \longmapsto &
      \leftl(
        \frac{\tau(\gamma)-\gamma}\alpha
        ,\;
        \chi_a(\tau)
      \rightr).
    \end{matrix}
  \]
  By \Cref{lem:subgroup-upnum}, for any $v > 0$, we have $\Gamma_K^v = \Gamma_{K(\alpha)}^{ev}$.
  Therefore:
  \[
    \lastjump \rho_{g,a}
    =
    \frac1e
    \lastjump\bigl(
      \rho_{g,a}|_{\Gamma_{K(\alpha)}}
    \bigr).
  \]
  Note that $\Gamma_{K(\alpha)} = \ker \chi_a$.
  We define the map $\tilde\rho \colon \Gamma_{K(\alpha)} \to C_p$, $\tau \mapsto \frac{\tau(\gamma) - \gamma}\alpha = \tau(\frac\gamma\alpha) - \frac\gamma\alpha$, which is the Artin--Schreier character in $\Hom(\Gamma_{K(\alpha)}, C_p)$ associated to the element
  \[
    \wp\leftl(
      \frac\gamma\alpha
    \rightr)
    =
    \frac{\gamma^p}{\alpha^p} - \frac\gamma\alpha
    =
    \frac{
      (a\gamma + g) -
      \gamma\alpha^{p-1}
    }{\alpha^p}
    =
    \alpha^{-p} g \in K(\alpha).
  \]
  For any $\tau \in \Gamma_{K(\alpha)}$, we have $\rho_{g,a}(\tau) = 1$ if and only if $\tilde\rho(\tau) = 1$ as~$\chi_a$ is tamely ramified.
  Therefore, $\lastjump (\rho_{g,a}|_{\Gamma_{K(\alpha)}}) = \lastjump \tilde\rho$.
  Combining these equalities with \Cref{lem:disc-lj}, we obtain
  \begin{equation}
    \label{eq:reldisc-lj}
    \delta^{(a)}(g)
    =
    \begin{cases}
      0
      &
      \text{if }
      \lastjump \tilde\rho = 0
      \\
      \frac{p-1}e
      (\lastjump\tilde\rho + 1)
      &
      \text{if }
      \lastjump \tilde\rho > 0
    \end{cases}
  \end{equation}
  and in particular $\delta^{(a)}(g) \in \Z_{\geq 0}$.
  It remains to compute the last jump of~$\tilde\rho$.
  In $K(\alpha)$, the element~$\alpha^{-p}g$ has valuation
  \[
    v_{K(\alpha)}(\alpha^{-p}g)
    =
    -p \frac{v_{K(\alpha)}(a)}{p-1} + v_{K(\alpha)}(g)
    =
    \frac{-peu}{p-1} + e v(g).
  \]
  Note that $\frac{eu}{p-1}$ is an integer (namely~$\frac{u}{\gcd(u,\,p-1)}$) so $p | \frac{-peu}{p-1}$.
  Moreover, $e|p-1$ so~$p \nmid e$.
  Therefore, $p$ divides $v_{K(\alpha)}(\alpha^{-p}g)$ if and only if~$p$ divides~$v(g)$.
  Note also that $v_{K(\alpha)}(\alpha^{-p}g) < 0$ if and only if~$v(g) < \frac{pu}{p-1}$.
  The last jump of Artin--Schreier characters is computed in \cite[p.~42]{hasse} or \cite[Proposition~2.1]{larathomas}: we have
  \begin{equation}
    \label{eq:lj-tilderho}
    \lastjump\tilde\rho
    \leq
    \max\bigl(0,
      -v_{K(\alpha)}(\alpha^{-p}g)
    \bigr)
    =
    \max\leftl(0,\,
      \frac{peu}{p-1} - e v(g)
    \rightr)
  \end{equation}
  with equality if and only if $v_{K(\alpha)}(\alpha^{-p}g) \geq 0$ or $p \nmid v_{K(\alpha)}(\alpha^{-p}g)$, i.e., if and only if~$v(g) \geq \frac{pu}{p-1}$ or~$p \nmid v(g)$.

  If $v(g) \geq \frac{pu}{p-1}$ then $\lastjump \tilde\rho = 0$ by \Cref{eq:lj-tilderho} so~$\delta^{(a)}(g) = 0$ by \Cref{eq:reldisc-lj}, showing~\ref{thm:local-disc:unramified-criterion}.
  If~$p \nmid v(g)$, then we have the equivalences (cf.~\Cref{eq:reldisc-lj,eq:lj-tilderho})
  \[
    \delta^{(a)}(g) = 0 \iff \lastjump \tilde\rho = 0 \iff v(g) \geq \frac{pu}{p-1},
  \]
  proving \ref{thm:local-disc:unramified-converse}.
  (We cannot have~$v(g) = \frac{pu}{p-1}$ as otherwise~$p-1\mid u$ and then~$p \mid v(g)$.)

  We now prove~\ref{thm:local-disc:formula}.

  Assume first that $\delta^{(a)}(g) > 0$, which by \Cref{eq:reldisc-lj} amounts to $\lastjump \tilde\rho \geq 1$.
  We have
  \[
    \delta^{(a)}(g) =
    \frac{p-1}e (\lastjump \tilde \rho + 1)
    \leq
    \frac{p-1}e +
    \max\leftl(
      0,\,
      pu - (p-1) v(g)
    \rightr)
  \]
  Recall that $\frac{p-1}e = \gcd(p-1, \, u) = (p-1)-\delta(\chi_a)$.
  By~\ref{thm:local-disc:unramified-criterion}, since $\delta^{(a)}(g) > 0$, we have $v(g) < \frac{pu}{p-1}$, so $\max\leftl(
        0,\,
        pu - (p-1) v(g)
      \rightr) = pu-(p-1)v(g)$ and then
  \[
    \delta^{(a)}(g) \leq
    \underbrace{
      pu+(p-1)(1-v(g))-\delta(\chi_a)
    }_{\geq 0}
  \]
  with equality if and only if we have equality in \Cref{eq:lj-tilderho}.
  This shows \ref{thm:local-disc:formula} in this case.
  (The case $v(g) = \frac{pu}{p-1}$ never occurs in this case by~\ref{thm:local-disc:unramified-criterion}.)

  Now, assume that $\delta^{(a)}(g) = 0$, which by \Cref{eq:reldisc-lj} is equivalent to $\lastjump \tilde\rho = 0$.
  The inequality is obvious in this case, so we focus on the equality case.
  We have equality if and only if $pu+(p-1)(1-v(g))-\delta(\chi_a) \leq 0$, if and only if $v(g) \geq 1 + \frac{pu}{p-1} - \frac{\delta(\chi_a)}{p-1} = \frac{pu}{p-1} + \frac{\gcd(u, p-1)}{p-1}$.
  If that is the case, we clearly have $v(g) > \frac{pu}{p-1}$.
  For the converse, we cannot have $p \nmid v(g) < \frac{pu}{p-1}$ by \ref{thm:local-disc:unramified-converse}, so assume that $v(g) > \frac{pu}{p-1}$, i.e., that $v(g) \geq u+k+1$.
  If $p-1 \mid u$, then $u=(p-1)k$ and $\delta(\chi_a) = 0$, so $v(g) \geq 1 + \frac{pu}{p-1} = 1 + \frac{pu}{p-1} - \frac{\delta(\chi_a)}{p-1}$ and we are done.
  If $p-1 \nmid u$, we have
  \begin{align*}
    pu+(p-1)(1-v(g))-\delta(\chi_a)
    & \leq
    pu-(p-1)(u+k)-\delta(\chi_a)
    \\ & =
    u-(p-1)(k+1)+\gcd(u, p-1)
    \\ & =
    \underbrace{u-(p-1)(k+1)}_{< 0}+\underbrace{\gcd\bigl(u-(p-1)(k+1), \, p-1\bigr)}_{\leq \Bigl| u-(p-1)(k+1) \Bigr|} \leq 0.
  \end{align*}

  Finally, \ref{thm:local-disc:formula-bis} follows directly from~\ref{thm:local-disc:unramified-criterion}, from~\ref{thm:local-disc:formula}, and from \Cref{def:local-reldisc-tasty}.
\end{proof}

\begin{proposition}[Unramified elements]
  \label{prop:local-unram}
  Let $g \in K \setminus \wpa(K)$.
  The following are equivalent:
  \begin{enumroman}
    \item
      $\delta(\rho_{g,a}) = 0$
    \item
      $\delta^{(a)}(g) = 0$
    \item
      $a \in (K^\times)^{p-1}$ and $g \in \wpa(K) + \kappa^\times \pi^{pk}$.
  \end{enumroman}
\end{proposition}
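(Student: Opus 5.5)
(i)$\iff$(ii) is immediate from \Cref{def:local-reldisc-tasty}: $\delta(\rho_{g,a}) = p\delta(\chi_a) + (p-1)\delta^{(a)}(g)$ is a sum of two nonnegative terms. So $\delta(\rho_{g,a})=0$ forces $\delta^{(a)}(g)=0$, and conversely it reduces to showing $\delta(\chi_a)=0$. I will therefore prove (ii)$\iff$(iii), and along the way show that (ii) forces $p-1 \mid u$, hence $\delta(\chi_a)=0$ by \Cref{eq:local-kummer-disc}.

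For (ii)$\Rightarrow$(iii), I would replace $g$ by a reduced representative. By \iref{thm:local-fundom}{thm:local-fundom:full}, write $g = f + w$ with $f \in \cF_a$ and $w \in \wpa(K)$. Since $\rho_{w,a}=\rho_{0,a}$ by \iref{thm:main-param}{thm:main-param-unique}, we have $\delta^{(a)}(w)=\delta^{(a)}(0)=0$, the latter by \iref{thm:local-disc}{thm:local-disc:unramified-criterion}. Then \iref{thm:local-disc}{thm:local-disc:unramified-twist} gives $\delta^{(a)}(f)=\delta^{(a)}(g)=0$.

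Now split $f = f_0 + f'$ with $f' \in \cF'_a$ and $f_0 \in \kappa\pi^{pk}$, where $f_0$ is present only when $a\in(K^\times)^{p-1}$ and $k=\frac{u}{p-1}$. The component $f_0$ has $v(f_0)\ge \frac{pu}{p-1}$, so $\delta^{(a)}(f_0)=0$, and by \iref{thm:local-disc}{thm:local-disc:unramified-twist} also $\delta^{(a)}(f')=0$. If $f'\neq0$, then $p\nmid v(f') < \frac{pu}{p-1}$, so \iref{thm:local-disc}{thm:local-disc:formula-bis} gives $\delta^{(a)}(f') = pu+(p-1)(1-v(f'))-\delta(\chi_a)$. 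This is positive, since $(p-1)v(f') < pu$ and $\delta(\chi_a) \le p-2 < p-1$, a contradiction. Hence $f'=0$ and $g \in \wpa(K) + \kappa\pi^{pk}$ in the case where $f_0$ exists. In the other case $f=f'=0$, so $g\in\wpa(K)$, which is excluded by hypothesis. Thus $a\in(K^\times)^{p-1}$, and $f_0\ne0$ because $g\notin\wpa(K)$. This gives (iii), and $p-1\mid u$ follows.

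For (iii)$\Rightarrow$(ii), note that $a\in(K^\times)^{p-1}$ implies $p-1 \mid u$ by \Cref{lem:power-criterion-leadcoeff}, so $pk=\frac{pu}{p-1}$. Write $g = w + c\pi^{pk}$ with $w\in\wpa(K)$. Then $\delta^{(a)}(c\pi^{pk})=0$ by \iref{thm:local-disc}{thm:local-disc:unramified-criterion}, and \iref{thm:local-disc}{thm:local-disc:unramified-twist} together with $\delta^{(a)}(w)=0$ gives $\delta^{(a)}(g)=0$. Moreover $\delta(\chi_a)=0$, so (i) also holds. The only step needing care is the positivity of the formula for $f'$; the rest is bookkeeping with the earlier results.
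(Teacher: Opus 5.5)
Your proof is correct and follows essentially the same route as the paper. You reduce to a representative in $\cF_a$ via \iref{thm:local-fundom}{thm:local-fundom:full}, then use \Cref{thm:local-disc} to force the $\cF'_a$-component to vanish. The only differences are minor: you get (i)$\Rightarrow$(ii) from nonnegativity of both summands, which is cleaner than the paper's divisibility argument, and you kill the $\cF'_a$-part with the explicit formula \iref{thm:local-disc}{thm:local-disc:formula-bis} rather than via \iref{thm:local-disc}{thm:local-disc:unramified-converse}.
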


\begin{proof}
  Adding an element of~$\wpa(K)$ if needed, we can assume that $g \in \cF_a \setminus \{0\}$, cf.~\iref{thm:main-param}{thm:main-param-unique} and \iref{thm:local-fundom}{thm:local-fundom:full}.
  In particular, we have $v(g) \leq \frac{pu}{p-1}$.
  
  Assume first that $\delta(\rho_{g,a}) = 0$.
  Then, $\delta(\chi_a) = - \frac{p-1}p \delta^{(a)}(g)$ is divisible by~$p-1$, so $p-1 \mid u$, $e=1$, $\delta(\chi_a)=0$, and then $\delta^{(a)}(g) = 0$.

  Assume now that $\delta^{(a)}(g) = 0$.
  By \iref{thm:local-disc}{thm:local-disc:unramified-converse}, this is impossible if $p \nmid v(g)$, so $p \mid v(g)$ and then $v(g) = \frac{pu}{p-1}$ as $g \in \cF_a \setminus \{0\}$.
  In particular, $p-1 \mid u$, so $u=(p-1)k$ and $v(g) = pk$.
  Since $g \in \cF_a$, this is only possible if $a \in (K^\times)^{p-1}$, and then $g \in \kappa^\times \pi^{pk} \subseteq \wpa(K) + \kappa^\times \pi^{pk}$.

  Finally, if $a \in (K^\times)^{p-1}$ and $v(g) = pk$, then $\delta(\chi_a)=0$ and by \iref{thm:local-disc}{thm:local-disc:unramified-criterion} we have $\delta^{(a)}(g) = 0$, so $\delta(\rho_{g,a}) = p\delta(\chi_a)+(p-1)\delta^{(a)}(g) = 0$.
\end{proof}

Consequently, the $\F_p$-linear subspace (stable under addition by \iref{thm:local-disc}{thm:local-disc:unramified-twist})
\[
  \cF_a^\ur := \suchthat{g \in \cF_a}{\delta^{(a)}(g)=0}
\]
admits the description
\[
  \cF_a^\ur =
  \begin{cases}
    0 & \text{if } a \not\in (K^\times)^{p-1} \\
    \kappa\pi^{pk} & \text{if } a \in (K^\times)^{p-1}
  \end{cases}
\]
A consequence is that the inverse of the inclusion map induces an isomorphism
\begin{equation}
  \label{eq:local-unram-F'}
  \cF_a/\cF_a^\ur
  \overset\sim\longrightarrow
  \cF'_a = \bigoplus_{p \nmid i < \frac{pu}{p-1}} \kappa \pi^i.
\end{equation}
By \iref{thm:local-disc}{thm:local-disc:unramified-twist}, $\delta^{(a)}$ is a well-defined map on the quotient space~$\cF_a/\cF_a^\ur \simeq \cF'_a$ (for elements of~$\cF_a$, it depends only on their projection to~$\cF'_a$).


\section{Counting $(C_p \rtimes C_{p-1})$-extensions of local function fields}
\label{sn:local-count}

The setup is similar to \Cref{sn:local}: we fix a local field $K = \kappa\ppar\pi$ and use the same notations.
We also assume that the residue field~$\kappa$ is finite, and we do not fix an element~$a \in K^\times$ beforehand.
We let $\pi \colon C_p \rtimes C_{p-1} \twoheadrightarrow C_{p-1}$ be the natural surjection.

\subsection{Fiberwise counts}

\begin{definition}
  \label{def:eta-Nux}
  For any $u \in \Z$ and any $x \in \R_{\geq0}$, define
  \[
    \eta^\rel_u(x)
    :=
    \frac{x-\gcd(p-1, \, u)}{p-1}
    - \frac {pu}{p-1}
    \andd
    N^\rel_u(x) :=
    \cardsuchthat{
      i \in \Z
    }{
      p \nmid i \\
      -\eta^\rel_u(x) \leq i < \frac{p u}{p-1}
    }
  \]
  and set by convention that $N^\rel_u(x) = -\infty$ if $x < 0$.
  Define also
  \[
    \eta_u(x)
    :=
    \eta_u\leftl(
      \frac{x-p\bigl(
        p-1-\gcd(p-1, \, u)
      \bigr)}{p-1}
    \rightr)
    =
    \frac{
      x
      - p(p-1)
      + \gcd(p-1, \, u)
    }{(p-1)^2}
    - \frac {pu}{p-1}
  \]
  and
  \[
    N_u(x) :=
    N_u^\rel\leftl(
      \frac{x-p\bigl(
        p-1-\gcd(p-1, \, u)
      \bigr)}{p-1}
    \rightr).
  \]
\end{definition}

We have
\[
  N_u(x) =
  \begin{cases}
    - \infty
    & \text{if }
    x < p(p-1)-p\gcd(p-1, \, u)
    \\
    \cardsuchthat{
      i \in \Z
    }{
      p \nmid i \\
      -\eta_u(x) \leq i < \frac{p u}{p-1}
    }
    \in \Z_{\geq 0}
    & \text{otherwise}.
  \end{cases}
\]

\begin{lemma}
  \label{lem:local:bound-disc-bound-val}
  Fix an element~$a \in K^\times$, let $u := v(a)$, and fix an element~$g \in K$ satisfying $v(g) \geq \frac{pu}{p-1}$ or $p \nmid v(g)$ (e.g., $g \in \cF_a$).
  Then, for any $x \geq 0$, we have the equivalences
  \begin{align*}
    \delta^{(a)}(g) \leq x
    &\iff
    v(g) \geq
    \min\leftl(
      \frac{pu}{p-1}, \,
      -\eta^\rel_u(x)
    \rightr)
    \\
    \delta(\rho_{g,a}) \leq x
    &\iff
    v(g) \geq
    \min\leftl(
      \frac{pu}{p-1}, \,
      -\eta_u(x)
    \rightr)
  \end{align*}
\end{lemma}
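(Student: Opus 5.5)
The plan is to read both equivalences off the explicit formula of \iref{thm:local-disc}{thm:local-disc:formula-bis}, which applies under exactly our hypothesis on~$g$. The first equivalence is then a direct translation. The second follows from the first via the identity $\delta(\rho_{g,a}) = p\delta(\chi_a) + (p-1)\delta^{(a)}(g)$ of \Cref{def:local-reldisc-tasty}.

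For the first equivalence we split into two cases. If $v(g) \geq \frac{pu}{p-1}$, then $\delta^{(a)}(g) = 0 \leq x$. The right-hand condition also holds, because $v(g)$ is at least $\frac{pu}{p-1}$, which is at least the minimum. Otherwise $p \nmid v(g) < \frac{pu}{p-1}$, and
\[
  \delta^{(a)}(g) = pu + (p-1)\bigl(1-v(g)\bigr) - \delta(\chi_a).
\]
Using $\delta(\chi_a) = p-1-\gcd(p-1,\,u)$ from \Cref{eq:local-kummer-disc}, this equals $pu - (p-1)v(g) + \gcd(p-1,\,u)$. Hence $\delta^{(a)}(g) \leq x$ is equivalent to
\[
  v(g) \geq \frac{pu}{p-1} - \frac{x - \gcd(p-1,\,u)}{p-1} = -\eta^\rel_u(x).
\]
Since in this case $v(g) < \frac{pu}{p-1}$, the condition $v(g) \geq \min\bigl(\frac{pu}{p-1},\,-\eta^\rel_u(x)\bigr)$ is equivalent to $v(g) \geq -\eta^\rel_u(x)$. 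This closes the first equivalence.

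For the second equivalence, the identity above shows that $\delta(\rho_{g,a}) \leq x$ is equivalent to $\delta^{(a)}(g) \leq x'$, where
\[
  x' := \frac{x - p\bigl(p-1-\gcd(p-1,\,u)\bigr)}{p-1}.
\]
This $x'$ is precisely the argument used in \Cref{def:eta-Nux} to define $\eta_u(x) = \eta^\rel_u(x')$. (The displayed definition literally writes $\eta_u$ on the right-hand side; it is evidently meant to be~$\eta^\rel_u$.) When $x' \geq 0$, we apply the first equivalence with $x'$ in place of~$x$ and are done.

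The only delicate point, and the expected obstacle, is the case $x' < 0$, where the first equivalence was only stated for nonnegative arguments. In that case $\delta(\rho_{g,a}) \leq x$ is false, since $\delta^{(a)}(g) \geq 0$. We therefore have to check that the right-hand condition also fails. Now $x' < 0$ forces $-\eta_u(x) > \frac{pu}{p-1} + \frac{\gcd(p-1,\,u)}{p-1} > \frac{pu}{p-1}$. So the minimum is $\frac{pu}{p-1}$, and the condition reduces to $v(g) \geq \frac{pu}{p-1}$, which is not automatically false. I would handle this as follows. If $v(g) \geq \frac{pu}{p-1}$, then $\delta(\rho_{g,a}) = p\delta(\chi_a)$, and $x' < 0$ means $x < p\delta(\chi_a)$, so the left side indeed fails. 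In that sub-case the equivalence as literally stated breaks down, and one must either restrict to $x \geq p\delta(\chi_a)$ (matching the convention $N_u(x) = -\infty$ below that threshold) or note that the intended statement carries this implicit range. I would state this restriction explicitly in the proof. For $x' \geq 0$, the argument of the previous paragraph goes through verbatim.
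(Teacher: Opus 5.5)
Your proof is correct and is the paper's argument: you read the first equivalence off the explicit formula in part (vi) of the discriminant theorem, then transfer it to the second via $\delta(\rho_{g,a}) = p\,\delta(\chi_a) + (p-1)\,\delta^{(a)}(g)$ and $\eta_u(x) = \eta^{\rel}_u(x')$ (your reading of the mistyped definition is right). Your caveat is genuine, and the paper's one-line ``the second claim follows from this'' skips it: if $p-1 \nmid u$, $0 \leq x < p\,\delta(\chi_a)$ and $v(g) \geq \frac{pu}{p-1}$ (e.g.\ $g=0$), then the right-hand side holds while $\delta(\rho_{g,a}) = p\,\delta(\chi_a) > x$, so the second equivalence needs $x \geq p\,\delta(\chi_a)$; this is harmless later, since the fiberwise count only uses the relative equivalence together with the convention $N^{\rel}_u(x') = -\infty$ for $x' < 0$.
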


\begin{proof}
  By \iref{thm:local-disc}{thm:local-disc:formula-bis}, we have $\delta^{(a)}(g) \leq x$ if and only if either $v(g) \geq \frac{pu}{p-1}$ or
  \begin{align*}
    & pu+(p-1)(1-v(g))-\delta(\chi_a) \leq x
    \\ \iff &
    pu+(p-1)-(p-1)v(g)-(p-1-\gcd(p-1, \, u)) \leq x
    \\ \iff &
    v(g) \geq
    \frac {pu}{p-1}
    - \frac{x-\gcd(p-1, \, u)}{p-1}.
  \end{align*}
  The first claim follows.
  The second claim follows from this, cf.~\Cref{def:disc-rel,def:local-reldisc-tasty}.
\end{proof}

\begin{lemma}
  \label{lem:count-hom-count-inertype}
  For any~$x \in \R_{\geq0}$ and for any~$a \in K^\times$, we have
  \[
    \cardsuchthat
      {\rho \in \Hom(\Gamma_K, C_p\rtimes C_{p-1})}
      {
        \pi \circ \rho = \chi_a \\
        \delta^\rel(\rho) \leq x
      }
    =
    p \cdot
    \cardsuchthat
      {g\in\cF'_a}
      {\delta^{(a)}(g) \leq x}.
  \]
\end{lemma}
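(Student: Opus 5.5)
The plan is to combine the fiberwise parametrization \Cref{eq:fiberwise-param} with the description of~$K/\wpa(K)$ from \Cref{thm:local-fundom}, and to check that~$\delta^\rel$ only depends on the relevant coset.

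\emph{Step 1: pass from homomorphisms to~$\wpa^{-1}(K)/K$.}
By \Cref{eq:fiberwise-param}, the homomorphisms~$\rho$ with~$\pi\circ\rho=\chi_a$ are in bijection with~$\wpa^{-1}(K)/K$, via~$[\gamma]\mapsto\rho_\gamma$.
I claim that~$\delta^\rel(\rho_\gamma)=\delta^{(a)}(\wpa(\gamma))$.
Indeed, by \Cref{def-rhoga}, $\rho_\gamma$ is one of the representatives of~$[\rho_{g,a}]$ with~$g:=\wpa(\gamma)$.
Moreover, $\delta^\rel$ is conjugation-invariant, because~$\delta$ and~$\delta\circ\pi$ are.
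Hence the condition~$\delta^\rel(\rho)\le x$ becomes~$\delta^{(a)}(\wpa(\gamma))\le x$.

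\emph{Step 2: pass to~$K/\wpa(K)$.}
By \iref{thm:main-param}{thm:main-param-unique}, $\delta^{(a)}$ is constant on~$\wpa(K)$-cosets.
By \Cref{eq:fiberwise-2}, the map~$\wpa\colon\wpa^{-1}(K)/K\to K/\wpa(K)$ is surjective with fibers of size~$p$ if~$a\notin(K^\times)^{p-1}$, and of size~$1$ otherwise.
So the left-hand side equals this fiber size times~$\#\{c\in K/\wpa(K) : \delta^{(a)}(c)\le x\}$.

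\emph{Step 3: use the fundamental domain, in two cases.}
If~$a\notin(K^\times)^{p-1}$, then by \iref{thm:local-fundom}{thm:local-fundom:is-fundom} we have~$K/\wpa(K)\simeq\cF_a=\cF'_a$.
The count is therefore~$p\cdot\#\{g\in\cF'_a : \delta^{(a)}(g)\le x\}$, as claimed.

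If~$a\in(K^\times)^{p-1}$, the fiber factor is~$1$.
By \iref{thm:local-fundom}{thm:local-fundom:is-fundom}, $K/\wpa(K)\simeq(\kappa/\wps A(\kappa))\pi^{pk}\oplus\cF'_a$.
By \Cref{lem:power-criterion-leadcoeff}, $A\in(\kappa^\times)^{p-1}$, so \Cref{eq:size-wp-finite} gives~$\card{\kappa/\wps A(\kappa)}=p$.
Elements of~$\kappa\pi^{pk}$ have valuation~$\ge pk=\frac{pu}{p-1}$, so they satisfy~$\delta^{(a)}=0$ by \iref{thm:local-disc}{thm:local-disc:unramified-criterion}.
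Then \iref{thm:local-disc}{thm:local-disc:unramified-twist} shows that~$\delta^{(a)}(c\pi^{pk}+g)=\delta^{(a)}(g)$ for~$g\in\cF'_a$.
The count is therefore again~$p\cdot\#\{g\in\cF'_a : \delta^{(a)}(g)\le x\}$.

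The only delicate point is Step~1: one must make sure that the fiberwise bijection is compatible with~$\delta^\rel$ at the level of actual homomorphisms, not just conjugacy classes.
This is handled by the conjugation invariance of~$\delta$ noted after \Cref{def:disc}.
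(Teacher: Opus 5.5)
Your proof is correct and follows the same route as the paper. Both arguments use the fiberwise parametrization, with fiber size $p$ or $1$ according to whether $a \in (K^\times)^{p-1}$, and then the description of $K/\wps a(K)$ from \Cref{thm:local-fundom} together with $\card{\kappa/\wps A(\kappa)} = p$. You additionally spell out two points the paper leaves implicit: that $\delta^\rel$ is conjugation-invariant, and that adding elements of $\kappa\pi^{pk}$ does not change $\delta^{(a)}$, by \iref{thm:local-disc}{thm:local-disc:unramified-criterion} and \iref{thm:local-disc}{thm:local-disc:unramified-twist}.
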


\begin{proof}
  By the observations of \Cref{subsn:fiberwise}, we have
  \[
    \cardsuchthat
      {\rho \in \Hom(\Gamma_K, C_p\rtimes C_{p-1})}
      {
        \pi \circ \rho = \chi_a \\
        \delta^\rel(\rho) \leq x
      }
    =
    \begin{cases}
      p \cdot
      \cardsuchthat
        {[g] \in K/\wpa(K)}
        {\delta^{(a)}(g) \leq x}
      &
      \text{if } a \not\in (K^\times)^{p-1},
      \\
      \cardsuchthat
        {[g] \in K/\wpa(K)}
        {\delta^{(a)}(g) \leq x}
      &
      \text{if } a \in (K^\times)^{p-1}
    \end{cases}
  \]
  If $a \not\in (K^\times)^{p-1}$, then $K/\wpa(K) \simeq \cF_a = \cF'_a$, and the claim follows.
  Assume now that $a \in (K^\times)^{p-1}$.
  By \iref{thm:local-fundom}{thm:local-fundom:is-fundom}, we have
  \[
    K/\wpa(K)
    \simeq
    \Bigl(
      \bigslant\kappa{\wps A (\kappa)}
    \Bigr)
    \pi^{pk}
    \oplus \cF_a'
  \]
  and $\card{\kappa/\wps A (\kappa)} = p$ by \Cref{eq:size-wp-finite}, proving the claim.
\end{proof}

\begin{proposition}[Fiberwise count]
  \label{prop:local-fiberwise-count}
  For any $a \in K^\times$ and for any $x \in \R$, letting $u := v(a)$ and using the convention that~$\card\kappa^{-\infty} = 0$, we have
  \begin{align*}
    \cardsuchthat{
      \rho \in \Hom(\Gamma_K, C_p \rtimes C_{p-1})
    }{
      \pi\circ\rho = \chi_a \\
      \delta^\rel(\rho) \leq x
    }
    & =
    p \card\kappa^{N^\rel_u(x)}
    \\
    \cardsuchthat{
      \rho \in \Hom(\Gamma_K, C_p \rtimes C_{p-1})
    }{
      \pi\circ\rho = \chi_a \\
      \delta(\rho) \leq x
    }
    & =
    p \card\kappa^{N_u(x)}.
  \end{align*}
\end{proposition}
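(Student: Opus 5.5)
The plan is to reduce both counts, via \Cref{lem:count-hom-count-inertype}, to counting elements of the finite-dimensional $\kappa$-vector space $\cF'_a = \bigoplus_{p \nmid i < \frac{pu}{p-1}} \kappa\pi^i$ with bounded relative discriminant. By that lemma, the first left-hand side equals $p \cdot \cardsuchthat{g \in \cF'_a}{\delta^{(a)}(g) \leq x}$. Every element of $\cF'_a$ lies in $\cF_a$, so \Cref{lem:local:bound-disc-bound-val} applies to it. It says that $\delta^{(a)}(g) \leq x$ holds if and only if $v(g) \geq \min\bigl(\frac{pu}{p-1}, -\eta^\rel_u(x)\bigr)$. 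We use the convention $v(0)=+\infty$; the zero element is fine, since $\delta^{(a)}(0)=0$ by \iref{thm:local-disc}{thm:local-disc:unramified-criterion}.

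Next, note that every nonzero $g \in \cF'_a$ has $v(g) < \frac{pu}{p-1}$. So the condition on a nonzero element reduces to $v(g) \geq -\eta^\rel_u(x)$. The set of $g \in \cF'_a$ satisfying this (together with $0$) is exactly the subspace $\bigoplus_{p \nmid i,\ -\eta^\rel_u(x) \leq i < \frac{pu}{p-1}} \kappa \pi^i$. Its $\kappa$-dimension is by definition $N^\rel_u(x)$, so the count is $\card\kappa^{N^\rel_u(x)}$ whenever $x \geq 0$. The case $x<0$ is separate: there is no $\rho$ at all since $\delta^\rel \geq 0$, and this matches the convention $N^\rel_u(x) = -\infty$, $\card\kappa^{-\infty}=0$. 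This proves the first formula.

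For the second formula, use $\delta(\rho) = p\,\delta(\chi_a) + (p-1)\delta^\rel(\rho)$ from \Cref{def:local-reldisc-tasty}, together with $\delta(\chi_a) = p-1-\gcd(p-1,u)$ from \Cref{eq:local-kummer-disc}. Together they give
\[
\delta(\rho) \leq x \iff \delta^\rel(\rho) \leq \frac{x - p(p-1-\gcd(p-1,u))}{p-1}.
\]
Applying the first formula at this shifted argument gives $p\card\kappa^{N_u(x)}$, by the definition of $N_u$ in \Cref{def:eta-Nux}; this includes the $-\infty$ case when the shifted argument is negative.

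There is no real obstacle. The step needing the most care is bookkeeping: confirming that the factor $p$ in \Cref{lem:count-hom-count-inertype} correctly absorbs both cases $a \in (K^\times)^{p-1}$ and $a \notin (K^\times)^{p-1}$ (which that lemma already handles), and that the boundary behaviour ($x<0$, $g=0$) matches the stated conventions.
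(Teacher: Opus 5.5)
Your proof is correct and follows the paper's argument step by step. You reduce to counting elements of $\cF'_a$ via \Cref{lem:count-hom-count-inertype}, convert the bound on $\delta^{(a)}$ into a valuation bound via \Cref{lem:local:bound-disc-bound-val}, and read off the dimension $N^\rel_u(x)$; then you treat $x<0$ by the convention and get the second formula from the shift $\delta(\rho) = p\,\delta(\chi_a) + (p-1)\delta^\rel(\rho)$. Your explicit handling of $g=0$ and of the minimum, using that nonzero elements of $\cF'_a$ have valuation $<\frac{pu}{p-1}$, spells out what the paper leaves implicit.
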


\begin{proof}
  If $x \geq 0$, we have
  \begin{align*}
    \cardsuchthat{
      \rho \in \Hom(\Gamma_K, C_p \rtimes C_{p-1})
    }{
      \pi\circ\rho = \chi_a \\
      \delta^\rel(\rho) \leq x
    }
    & =
    p \cdot
    \cardsuchthat
      {g\in\cF'_a}
      {\delta^{(a)}(g) \leq x}
    \tag*{by \Cref{lem:count-hom-count-inertype}}
    \\
    & =
    p \cdot
    \cardsuchthat
      {g\in\cF'_a}
      {v(g) \geq \min(\frac{pu}{p-1}, -\eta^\rel_u(x))}
    \tag*{by \Cref{lem:local:bound-disc-bound-val}}
    \\
    & =
    p
    \card\kappa^{N^\rel_u(x)}
    \tag*{by definition of~$\cF'_a$.}
  \end{align*}
  If $x<0$, the conventions~$N^\rel_u(x) = -\infty$ and~$\card\kappa^{-\infty}=0$ ensure that both sides are zero, so the formula still holds.

  The second claim easily follows as we have the following equivalence, for any~$x \in \R$ and for any~$\rho$ such that~$\pi\circ\rho=\chi_a$ (cf.~\Cref{def:disc-rel}, and compare with the definition of~$N_u(x)$):
  \[
    \delta(\rho) \leq x
    \iff
    \delta^\rel(\rho)
    \leq
    \frac{x-p\delta(\chi_a)}{p-1}
    = \frac{x-p\bigl(p-1-\gcd(p-1, \, u)\bigr)}{p-1}.
    \qedhere
  \]
\end{proof}

\begin{remark}
  The formulas in \Cref{prop:local-fiberwise-count} depend on~$a$ only in the form of its valuation~$u = v(a)$, i.e., the distribution of the discriminants of the local $(C_p \rtimes C_{p-1})$-extensions lifting a given Kummer $C_{p-1}$-extension only depends on the inertial type of that Kummer extension.
  This is analogous to the phenomenon described in \cite[Theorem~1.3]{wildlocglob}.
\end{remark}

\subsection{Total local counts.}

Summing over all possible characters~$\chi \in \Hom(\Gamma_K, C_{p-1})$ yields an exact formula for the total count of $(C_p \rtimes C_{p-1})$-extensions of the local field $K = \kappa\ppar\pi$ of discriminant exponent $\leq x$, for any $x \in \R_{\geq0}$.
To illustrate the generality of the claims and methods, we include counts for the surjective homomorphisms (corresponding to Galois field extensions~$L|K$ with an isomorphism $\Gal(L|K) \simeq C_p \rtimes C_{p-1}$) and dihedral $D_p$-extensions (with $D_p = C_p\rtimes C_2 \simeq C_p \rtimes \frac{p-1}2C_{p-1}$) when $p \neq 2$:

\begin{theorem}
  \label{thm:local-total-count}
  For any $x \in \R_{\geq0}$, we have the total count
  \[
    \frac{1}{p(p-1)}
    \cardsuchthat{
      \rho \in \Hom(\Gamma_K, C_p \rtimes C_{p-1})
    }{
      \delta(\rho) \leq x
    }
    =
    \sum_{u = 0}^{p-2}\,
      \card\kappa^{N_u(x)}
  \]
  and, for $p \neq 2$, we have the following counts for surjective (resp.~dihedral) extensions:
  \begin{align*}
    \frac{1}{p(p-1)}
    \cardsuchthat{
      \rho \in \Hom^\surj(\Gamma_K, C_p \rtimes C_{p-1})
    }{
      \delta(\rho) \leq x
    }
    & =
    \sum_{u = 0}^{p-2}\,
      \frac{\varphi(\gcd(u,p-1))}{\gcd(u,p-1)}
      \Bigl(
        \card\kappa^{N_u(x)}
        - 1
      \Bigr)
    \tag*{if $p\neq2$}
    \\
    \frac{1}{2p}
    \cardsuchthat{
      \rho \in \Hom^\surj(\Gamma_K, D_p)
    }{
      \delta(\rho) \leq x
    }
    & =
    \frac12
    \card\kappa^{N_0(\frac{p-1}2x)}
    +
    \card\kappa^{N_{\frac{p-1}2}(\frac{p-1}2x)}
    -\frac32
    \tag*{if $p\neq2$}
  \end{align*}
  where~$\varphi$ denotes Euler's totient function.
\end{theorem}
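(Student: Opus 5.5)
The plan is to sum the fiberwise counts of \Cref{prop:local-fiberwise-count} over all characters $\chi \in \Hom(\Gamma_K, C_{p-1})$. By \Cref{prop:kummer-local-normal-form} and \Cref{prop:kummer-inertia}, every such $\chi$ equals $\chi_{\pi^u\tilde a}$ for a unique $u \in \{0,\ldots,p-2\}$ (namely $\inv\chi$) and $\tilde a \in \kappa^\times$. For fixed $u$, the map $\chi \mapsto \chi(\tilde\sigma)$ is a bijection onto $C_{p-1}$, so there are exactly $p-1$ characters with $\inv\chi = u$. \Cref{prop:local-fiberwise-count} says that, for $a = \pi^u\tilde a$, the number of lifts $\rho$ of $\chi_a$ with $\delta(\rho)\le x$ equals $p\card\kappa^{N_u(x)}$, and this depends only on $u = v(a)$. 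Summing over the $(p-1)^2$ characters therefore gives $p(p-1)\sum_{u=0}^{p-2}\card\kappa^{N_u(x)}$, which is the first formula.

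For the surjective count, I would first classify the subgroups of $G = C_p\rtimes C_{p-1}$. A subgroup either contains $C_p$, in which case it is $C_p\rtimes H$ for some $H \le C_{p-1}$, or has order prime to $p$, in which case it is conjugate into the complement $s(C_{p-1})$. Hence $\rho$ is surjective if and only if $\chi := \pi\circ\rho$ is surjective and $\rho$ is not conjugate into $s(C_{p-1})$.

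Now fix $\chi = \chi_a$ nontrivial, which is automatic for surjective $\chi$ since $p \neq 2$. The lifts of $\chi$ landing in a conjugate of the complement are exactly the $p$ distinct $C_p$-conjugates of $s\circ\chi_a = \rho_{0,a}$. They are distinct because $C_p$ has trivial centralizer on $s(\im\chi)$. Under \Cref{eq:fiberwise-param}, these are the classes $[\gamma]$ with $\wpa(\gamma) \in \wpa(K)$. All of them have $\delta = p\,\delta(\chi_a)$, so they satisfy $\delta\le x$ exactly when $N_u(x) \ge 0$. Subtracting them gives $p(\card\kappa^{N_u(x)}-1)$ surjective lifts. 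Whenever $N_u(x) = -\infty$, i.e.\ $x < p\,\delta(\chi_a)$, there are no lifts at all, and I will check how the stated formula's convention handles this case. Presumably the intended reading is $\card\kappa^{-\infty}-1 := 0$, or the boundary cases are simply consistent; this is the one point I expect to need care.

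It then remains to count the surjective characters with $\inv = u$. By \Cref{prop:kummer-inertia}, these correspond to $c = \chi(\tilde\sigma) \in C_{p-1}$ with $\gcd(u,c,p-1) = 1$. Setting $g = \gcd(u,p-1)$, the condition becomes $\gcd(c,g) = 1$, and there are $(p-1)\varphi(g)/g$ such residues $c$. Multiplying by $p(\card\kappa^{N_u(x)}-1)$, summing over $u$ and dividing by $p(p-1)$ yields the second formula.

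For $D_p$, I would embed $D_p = C_p\rtimes\{\pm1\} \subseteq G$. Then $\Hom^\surj(\Gamma_K,D_p)$ is identified with the homomorphisms into $G$ with image exactly $D_p$, namely the lifts of characters $\chi$ of exact order $2$ that are not conjugate into a complement. By \Cref{rk:discr-étale-alg}, the discriminant computed in $G$ is $[G:D_p]=\frac{p-1}2$ times the one computed in $D_p$. So the condition $\delta_{D_p}\le x$ becomes $\delta_G\le\frac{p-1}2x$.

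The characters of order $2$ are the following three. There is one with $u=0$ and $c=\frac{p-1}2$, which is unramified. There are two with $u=\frac{p-1}2$, namely $c\in\{0,\frac{p-1}2\}$. By the previous paragraph, each contributes $p(\card\kappa^{N_u}-1)$ surjective lifts, evaluated at $\frac{p-1}2x$. Dividing the total by $2p$ gives $\frac12(\card\kappa^{N_0}-1)+(\card\kappa^{N_{(p-1)/2}}-1)$, which is the stated expression.
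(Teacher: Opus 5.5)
Your proof is correct and follows the paper's own argument. You sum the fiberwise counts of \Cref{prop:local-fiberwise-count} over the $(p-1)^2$ Kummer characters. For the surjective and dihedral counts you restrict to characters with the right image and remove, in each fiber, the $p$ lifts coming from $0\in\cF'_a$. You identify these more explicitly than the paper does, as the $C_p$-conjugates of $s\circ\chi_a$, each of discriminant exponent $p\,\delta(\chi_a)$.

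Your worry about $N_u(x)=-\infty$ is justified, and the boundary cases are \emph{not} consistent. With the convention $\card\kappa^{-\infty}=0$, the stated surjective and dihedral formulas become negative for small $x$; for example, both give $-1$ at $x=0$ when $p=3$. They are therefore valid only if every term with $N_u(x)=-\infty$ is read as $0$. This happens automatically for $x\geq p(p-2)$, resp.\ $x\geq p$ in the dihedral case. Your count proves exactly this corrected statement, which the paper's proof silently assumes.
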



\begin{proof}
  We first deal with the first claim, counting all extensions.
  We have
  \begin{align*}
    &\cardsuchthat{
      \rho \in \Hom(\Gamma_K, C_p \rtimes C_{p-1})
    }{
      \delta(\rho) \leq x
    }
    \\
    ={}&
    \sum_{\chi \in \Hom(\Gamma_K, C_{p-1})}\;
      \cardsuchthat{
        \rho \in \Hom(\Gamma_K, C_p \rtimes C_{p-1})
      }{
        \pi\circ\rho = \chi \\
        \delta(\rho) \leq x
      }
    \\
    ={}&
    \sum_{u = 0}^{p-2}\;
      \sum_{[\tilde a] \in \F_q^\times/(\F_q^\times)^{p-1}}\;
        \cardsuchthat{
          \rho \in \Hom(\Gamma_K, C_p \rtimes C_{p-1})
        }{
          \pi\circ\rho = \chi_{\tilde a \pi^u} \\
          \delta(\rho) \leq x
        }
    &
    \tag*{by \Cref{eq:kummer,prop:kummer-local-normal-form}}
    \\
    ={}&
    \sum_{u = 0}^{p-2}\;
      \sum_{[\tilde a] \in \F_q^\times/(\F_q^\times)^{p-1}}\,
        p \card\kappa^{N_u(x)}
    &
    \tag*{by \Cref{prop:local-fiberwise-count}}
    \\
    ={}&
    p(p-1)
    \cdot
    \sum_{u = 0}^{p-2}\,
      \card\kappa^{N_u(x)}
    \tag*{by \Cref{rk:kummer-ext-finite-fields}.}
  \end{align*}

  Assume that $p \neq 2$.
  We now explain how to adapt the argument above to count only surjections.
  First, the bottom $C_{p-1}$-extension~$\chi \in \Hom(\Gamma_K, C_{p-1})$ must be surjective.
  For any fixed~$u := \inv\chi \in \{0, \ldots, p-2\}$, via the bijection of \Cref{prop:kummer-inertia}, this amounts to picking an element~$\lambda \in C_{p-1}$ such that $\gcd(u,\lambda,p-1)=1$: the number of such~$\lambda$ is the number of $\lambda \in \{0, \ldots, p-2\}$ that are coprime to $\gcd(u,\,p-1)$, i.e., this is $\frac{p-1}{\gcd(u,\,p-1)}$ times more than~$\varphi(\gcd(u,\,p-1))$:
  \[
    \cardsuchthat{\lambda \in C_{p-1}}{\langle u, \lambda \rangle = C_{p-1}}
    =
    \frac{p-1}{\gcd(u,\,p-1)}\varphi(\gcd(u,\,p-1)).
  \]
  Consequently, in the computation above, instead of multiplying by $\card{\F_q^\times/(\F_q^\times)^{p-1}} = p-1$ in the last step, we should only multiply each summand (for a given~$u$) by the quantity above.
  Then, we must also ensure that the $C_p$-extension on top be of degree~$p$, i.e., non-trivial (since~$p$ is prime, there are no other options)---once we have already made sure that the Kummer $C_{p-1}$-extension~$\chi_a$ is surjective, then~$\chi_a$ is non-trivial as $p\neq2$, so we are in the case ``$a \not\in (K^\times)^{p-1}$'' in which $\cF_a \simeq \cF'_a$.
  Therefore, excluding the trivial $C_p$-extension is the same as excluding the trivial element of~$\cF'_a$.%
  \footnote{
    What we are saying is that this $C_p$-extension (above a ramified $C_{p-1}$-extension, such that the full extension is a $(C_p \rtimes C_{p-1})$-extension) is trivial if and only if it is unramified; this can be shown directly using the fact that the inertia subgroup must be normal in the Galois group when $u \neq 0$, and the fact that the residue field extension must be cyclic when $u=0$.
  }
  Reproducing the proof of \Cref{prop:local-fiberwise-count}, we obtain $p(\card\kappa^{N_u(x)}-1)$ choices for~$\rho$ in the fiber above a fixed $\chi \in \Hom^\surj(\Gamma_K, C_{p-1})$.
  With these changes, the computation above becomes
  \begin{align*}
    \cardsuchthat{
      \rho \in \Hom^\surj(\Gamma_K, C_p \rtimes C_{p-1})
    }{
      \delta(\rho) \leq x
    }
    & =
    \sum_{u = 0}^{p-2}\,
      \frac{p-1}{\gcd(u,\,p-1)}
      \varphi(\gcd(u,\,p-1))
      \cdot
      p
      (\card\kappa^{N_u(x)}-1)
    \\
    & =
    p(p-1)
    \cdot
    \sum_{u = 0}^{p-2}\,
      \frac{\varphi(\gcd(u,\,p-1))}{\gcd(u,\,p-1)}
      (\card\kappa^{N_u(x)}-1)
  \end{align*}
  and the second claim follows.
  The case of dihedral extensions is treated similarly: the Kummer $C_{p-1}$-extension must have its image equal to~$\frac{p-1}2 C_{p-1} \simeq C_2$, so either $u = \frac{p-1}2$ and then~$\lambda$ is picked freely in $\{0, \frac{p-1}2\}$ ($2$ choices), or $u=0$ and $\lambda = \frac{p-1}2$.
  As above, the condition that the $C_p$-extension on top be surjective amounts to it being non-trivial, which amounts to it being unramified.
  Note also that the discriminant exponent of~$\rho$ as a $D_p$-extension is $\frac{\card{D_p}}{\card{C_p \rtimes C_{p-1}}} = \frac2{p-1}$ times its discriminant exponent as a $(C_p \rtimes C_{p-1})$-extension.
  Therefore, we get $p(\card\kappa^{N_u(\frac{p-1}2x)}-1)$ choices in each fiber.
  The total count is then
  \[
    \cardsuchthat{
      \rho \in \Hom^\surj(\Gamma_K, D_p)
    }{
      \delta(\rho) \leq x
    }
    =
    \underbrace{
      p(\card\kappa^{N_0(\frac{p-1}2x)}-1)
    }_{\text{case } u=0, \, \lambda=\frac{p-1}2}
    +
    \underbrace{
      2p(\card\kappa^{N_{\frac{p-1}2}(\frac{p-1}2x)}-1)
    }_{\text{case } u=\frac{p-1}2, \, \lambda\in\{0,\frac{p-1}2\}}.
    \qedhere
  \]
\end{proof}

\subsection{The behavior of~$N_u(x)$}
\label{subsn:behavior-nux}

As shown in \Cref{thm:local-total-count}, the number of $(C_p\rtimes C_{p-1})$-extensions of~$\F_q\ppar\pi$ with discriminant exponent~$\leq x$ is controlled by the numbers~$N_u(x)$ for $u \in \{0, \ldots, p-2\}$.
We now study these numbers.

\subsubsection{Estimates}
We begin with a computation of~$N_u(x)$:

\begin{lemma}
  \label{lem:Nu}
  For any $u \in \Z$ and any $x \in \R$, we have
  \begin{align*}
    N^\rel_u(x)
    & =
    -\infty
    & \text{if }
    x < 0
    \\
    N^\rel_u(x)
    & =
    0
    & \text{if }
    0 \leq x \leq \gcd(p-1, \, u)
    \\
    N^\rel_u(x)
    & =
    u + \floor{\eta^\rel_u(x)} - \floor{\frac{\eta^\rel_u(x)}p}
    & \text{if }
    x  \geq \gcd(p-1, \, u)
    \\
    N_u(x)
    & =
    -\infty
    & \text{if }
    x < p(p-1)-p\gcd(p-1, \, u)
    \\
    N_u(x)
    & =
    0
    & \text{if }
    p(p-1)-p\gcd(p-1, \, u) \leq x \leq p(p-1)-\gcd(p-1, \, u)
    \\
    N_u(x)
    & = u + \floor{\eta_u(x)} - \floor{\frac{\eta_u(x)}p}
    & \text{if }
    x \geq p(p-1)-\gcd(p-1, \, u).
  \end{align*}
\end{lemma}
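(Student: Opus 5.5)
The plan is to prove the three formulas for~$N^\rel_u(x)$ first, and then deduce those for~$N_u(x)$ by substitution. By \Cref{def:eta-Nux}, $N_u(x) = N^\rel_u(y)$ with $y := \frac{x - p(p-1-\gcd(p-1,\,u))}{p-1}$, and $\eta_u(x) = \eta^\rel_u(y)$. The conditions $y<0$, $0 \leq y \leq \gcd(p-1,u)$ and $y \geq \gcd(p-1,u)$ translate, after multiplying by $p-1$ and adding $p(p-1) - p\gcd(p-1,u)$, into exactly the three ranges in the statement. For instance, $y \leq \gcd$ is equivalent to $x \leq p(p-1) - p\gcd + (p-1)\gcd = p(p-1) - \gcd$. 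So the $N_u$ part is purely formal.

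For~$N^\rel_u$, the case $x<0$ is the convention. For $x \geq 0$, write $\eta := \eta^\rel_u(x)$ and $g := \gcd(p-1,u)$. Then $N^\rel_u(x)$ is the number of integers~$i$ with $p \nmid i$ and $-\eta \leq i < \frac{pu}{p-1}$.

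The key observation is that the integers~$i$ with $i < \frac{pu}{p-1}$ are those with $i \leq \ceil{\frac{pu}{p-1}} - 1$. We want to compare this with $u + \floor{\frac{u}{p-1}}$, which is the natural count of integers below $\frac{pu}{p-1} = u + \frac{u}{p-1}$. More conveniently, we substitute $j := i - u$, or rather we count directly. The number of integers~$i$ with $p \nmid i$ in an interval $[m, M]$ equals
\[
  (M - m + 1) - \Bigl(\floor{\tfrac{M}{p}} - \floor{\tfrac{m-1}{p}}\Bigr).
\]
Here $m = \ceil{-\eta}$ and $M = \ceil{\frac{pu}{p-1}} - 1$.

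To get the claimed closed form, the plan is to note that the $p$-divisible integers below $\frac{pu}{p-1}$ are $pi'$ with $i' < \frac{u}{p-1}$, i.e.\ $i' \leq k'-1$. The non-$p$-divisible integers in $[\,\cdot\,, \frac{pu}{p-1})$ therefore pair nicely. It is cleaner to count the complement. Integers $i$ with $-\eta \leq i < \frac{pu}{p-1}$ number $\ceil{\frac{pu}{p-1}} + \floor{\eta}$. Among them, those divisible by~$p$ number $k' + \floor{\frac{\eta}{p}}$, using the equivalence $-\eta \leq pi' \iff i' \geq -\eta/p \iff i' \geq -\floor{\eta/p}$. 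The difference is
\[
  \ceil{\tfrac{pu}{p-1}} - \ceil{\tfrac{u}{p-1}} + \floor{\eta} - \floor{\tfrac{\eta}{p}}.
\]
Since $\frac{pu}{p-1} = u + \frac{u}{p-1}$ with $u$ an integer, the first difference is exactly~$u$, which gives the formula. This identity is valid whenever the interval count is nonnegative, i.e.\ whenever $\floor{\eta} \geq -\ceil{\frac{pu}{p-1}}$, which amounts to $\eta + \frac{pu}{p-1} > -1$ up to integrality care. Since $\eta + \frac{pu}{p-1} = \frac{x-g}{p-1}$, this holds as soon as $x \geq g$.

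For $0 \leq x \leq g$, we have $-\eta \geq \frac{pu}{p-1}$, so the interval is empty and the count is~$0$. At $x = g$ both formulas must agree. I will check that $u + \floor{-\frac{pu}{p-1}} - \floor{-\frac{u}{p-1}} = u - \ceil{\frac{pu}{p-1}} + \ceil{\frac{u}{p-1}} = 0$, which is the same integrality identity.

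The main obstacle is the boundary bookkeeping. One point is that the floors and ceilings must be handled correctly when~$\eta$ is an integer or when $p-1 \mid u$. The other is that the counting identity must really hold for all $x \geq g$, including the regime where $-\eta$ lies just below $\frac{pu}{p-1}$. That regime is where a naive count could become negative. Since $\frac{x-g}{p-1} \geq 0$, we get $\floor{\eta} \geq -\ceil{\frac{pu}{p-1}}$, which is enough.
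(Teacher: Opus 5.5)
Your proof is correct and is essentially the paper's argument: you count the integers in $[-\eta^\rel_u(x), \frac{pu}{p-1})$, subtract the multiples of~$p$, use $\ceil{\frac{pu}{p-1}} = u + \ceil{\frac{u}{p-1}}$, and obtain the $N_u$ statements by substituting $\frac{x-p(p-1-\gcd(p-1,\,u))}{p-1}$ for~$x$. For $x \geq \gcd(p-1,u)$, both interval counts are valid because of the single inequality $-\eta^\rel_u(x) \leq \frac{pu}{p-1}$, which is how the paper argues and which also covers the count of multiples of~$p$; your condition on $\floor{\eta}$ alone only addresses the first count.
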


\begin{proof}
  When $x < 0$, we have $N^\rel_u(x) = - \infty$ by convention.
  When $0 \leq x \leq \gcd(p-1, \, u)$, we have $-\eta^\rel_u(x) \geq \frac{pu}{p-1}$, so $N^\rel_u(x) = 0$.
  Assume now that $x \geq \gcd(p-1, \, u)$, so $-\eta^\rel_u(x) \leq \frac{pu}{p-1}$.
  For any real numbers $a \leq b$, we have
  \[
    \cardsuchthat{i \in \Z}{p \nmid i \\ a \leq i < b}
    =
    \cardsuchthat{i \in \Z}{a \leq i < b} - \cardsuchthat{i \in \Z}{\frac ap \leq i < \frac bp}
    =
    \ceil{b}-\ceil{a}-\ceil{\frac bp}+\ceil{\frac ap}.
  \]
  Applying this to~$N^\rel_u(x)$ gives the result, as $\ceil{\frac{pu}{p-1}} =  \ceil{u + \frac{u}{p-1}} = u +  \ceil{\frac{u}{p-1}}$.
  The claims for~$N_u(x)$ follow from those for~$N^\rel_u(x)$.
\end{proof}

A consequence is the ``periodic'' behavior of~$N_u(x)$: for any $x  \geq p(p-1)-\gcd(p-1, \, u)$ and for any~$\ell \in \Z_{\geq0}$, we have $\eta_u(x+p(p-1)^2\ell) = \eta_u(x) + p\ell$, so
\[
  N_u(x+ p(p-1)^2 \ell)
  =
  u + \floor{\eta_u(x)+p\ell} - \floor{\frac{\eta_u(x)}p+\ell}
  =
  N_u(x) + (p-1) \ell.
\]

In \Cref{cor:values-nux}, we shall estimate~$N_u(x)$ more precisely using the following observation:

\begin{lemma}
  \label{lem:estimate-floor-minus-floor}
  For all~$x \in \R$, we have
  \[
    \left\lvert
      \floor x -
      \floor{\frac xp} -
      \leftl(
        1-\frac1p
      \rightr)
      x
    \right\rvert
    \leq
    1-\frac1p
  \]
\end{lemma}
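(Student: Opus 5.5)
We need to bound $\lvert \floor x - \floor{x/p} - (1-\frac1p)x\rvert \le 1-\frac1p$. The plan is to write $x$ in terms of fractional parts and reduce everything to elementary bounds on $\{t\} := t - \floor t \in [0,1)$. Rewriting $\floor x = x - \{x\}$ and $\floor{x/p} = \frac xp - \{\frac xp\}$, the quantity inside the absolute value becomes
\[
  \floor x - \floor{\frac xp} - \leftl(1-\frac1p\rightr)x
  =
  \leftl\{\frac xp\rightr\} - \{x\}.
\]
So it suffices to show $\bigl\lvert \{\frac xp\} - \{x\}\bigr\rvert \leq 1-\frac1p$.

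The key refinement is to locate $\{x/p\}$ more precisely in terms of $\{x\}$. Write $\floor x = mp + r$ with $m \in \Z$ and $r \in \{0,\ldots,p-1\}$, so $x = mp + r + \{x\}$. Then $\frac xp = m + \frac{r + \{x\}}p$ with $0 \leq \frac{r+\{x\}}p < 1$, hence $\{\frac xp\} = \frac{r + \{x\}}{p}$. Substituting,
\[
  \leftl\{\frac xp\rightr\} - \{x\}
  =
  \frac{r}{p} - \leftl(1-\frac1p\rightr)\{x\}.
\]

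Now bound both sides. From above, using $r \leq p-1$ and $\{x\} \geq 0$, the difference is at most $\frac{p-1}p = 1-\frac1p$. From below, using $r \geq 0$ and $\{x\} < 1$, it is greater than $-(1-\frac1p)$. Together these give the claimed inequality, in fact strictly on the lower side.

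There is no real obstacle; the only step requiring care is the identity $\{x/p\} = (r+\{x\})/p$, which follows from checking that $0 \le r + \{x\} < p$.
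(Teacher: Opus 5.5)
Your proof is correct and is essentially the paper's argument. The paper uses $p$-periodicity to reduce to $0 \le x < p$ and writes $x = k + r$, which yields the same expression $\frac kp - (1-\frac1p) r$ that you obtain as $\frac rp - (1-\frac1p)\{x\}$ via the Euclidean division $\floor x = mp + r$; your lower bound $> -(1-\frac1p)$ also has the correct sign, whereas the paper's text drops the minus sign at that step.
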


\begin{proof}
  The left-hand side is $p$-periodic and the right-hand side is constant, so we can assume that $0 \leq x < p$, in which case $\floor{\frac xp} = 0$.
  Write $x = k+r$ with $k \in \{0, \ldots, p-1\}$ and $0 \leq r < 1$.
  We have
  \[
    \floor x -
    \leftl(
      1-\frac1p
    \rightr)
    x
    =
    \frac kp - \leftl(1-\frac1p\rightr) r
  \]
  which is larger than $\frac0p - \leftl(1-\frac1p\rightr) 1 = 1-\frac1p$, and at most equal to $\frac{p-1}p- \leftl(1-\frac1p\rightr) 0 = 1-\frac1p$.
\end{proof}

\begin{corollary}
  \label{cor:values-nux}
  For any $u \in \{0, \ldots, p-2\}$ and any $x \geq p(p-1)-\gcd(p-1,u)$, we have
  \[
    N_u(x)
    \begin{cases}
      = y + 1 & \text{if } u=0 \text{ and } p(p-1)^2 \mid x \\
      = y & \text{if } p(p-1) \mid x \text{ and } (u \neq 0 \text{ or } p(p-1)^2 \nmid x) \\
      \in \{\floor y, \ceil y\} & \text{otherwise}
    \end{cases}
    \qquad\text{where }
    y = \frac x{p(p-1)} - 1.
  \]
  In particular, for a fixed~$x$, $N_u(x)$ takes at most two values (differing by~$1$) when~$u$ varies.
\end{corollary}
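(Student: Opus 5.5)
The plan is to reduce everything to \Cref{lem:estimate-floor-minus-floor}, applied to $\eta_u(x)$. Write $g := \gcd(p-1,\,u)$. Since $x \geq p(p-1)-g$, \Cref{lem:Nu} gives $N_u(x) = u + \floor{\eta_u(x)} - \floor{\frac{\eta_u(x)}p}$.

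\textbf{Step 1: the main term.} A direct computation from \Cref{def:eta-Nux} gives
\[
  u + \Bigl(1-\frac1p\Bigr)\eta_u(x)
  = u + \frac{x-p(p-1)+g}{p(p-1)} - u
  = y + \frac{g}{p(p-1)}.
\]
Hence $N_u(x) = y + \frac{g}{p(p-1)} + E$, where $E := \floor{\eta_u(x)} - \floor{\frac{\eta_u(x)}p} - (1-\frac1p)\eta_u(x)$. By \Cref{lem:estimate-floor-minus-floor}, $\lvert E\rvert \leq 1-\frac1p$.

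\textbf{Step 2: locating $N_u(x)$ in a half-open interval.} Since $1 \leq g \leq p-1$, we have $0 < \frac{g}{p(p-1)} \leq \frac1p$. Combining with the bound on $E$, the integer $N_u(x)$ lies in $(y-1,\,y+1]$. The lower endpoint is excluded because $g \geq 1$. The upper endpoint $y+1$ is attained only if $g = p-1$ and $E = 1-\frac1p$.

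\textbf{Step 3: the equality case.} Going back to the proof of \Cref{lem:estimate-floor-minus-floor}, $E = 1-\frac1p$ holds exactly when $\eta_u(x)$ is an integer congruent to $-1$ modulo $p$ (the case $k=p-1$, $r=0$). Among $u \in \{0,\ldots,p-2\}$, the condition $g=p-1$ forces $u=0$. In that case
\[
  \eta_0(x) = \frac{x-(p-1)^2}{(p-1)^2} = \frac{x}{(p-1)^2}-1,
\]
so $\eta_0(x) \equiv -1 \pmod p$ exactly when $p(p-1)^2 \mid x$. Then $y$ is an integer and $N_u(x) = y+1$.

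\textbf{Step 4: the remaining cases.}
\begin{itemize}
  \item If $p(p-1) \mid x$, then $y \in \Z$, so the only integers in $(y-1,y+1]$ are $y$ and $y+1$. If moreover $u\neq 0$ or $p(p-1)^2 \nmid x$, Step 3 rules out $y+1$, so $N_u(x) = y$.
  \item If $p(p-1) \nmid x$, then $y \notin \Z$, and the integers in $(y-1,y+1]$ are exactly $\floor y$ and $\ceil y$.
\end{itemize}

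\textbf{Final claim.} For fixed $x$, the possible values as $u$ varies are $\{\floor y, \ceil y\}$ when $y \notin \Z$, and $\{y, y+1\}$ when $y \in \Z$. In both cases these are at most two values differing by $1$.

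The only delicate point is Step 3, namely pinning down exactly when the bound of \Cref{lem:estimate-floor-minus-floor} is sharp. The rest is arithmetic.
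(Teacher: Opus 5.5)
Your proof is correct and follows the paper's argument: the same main-term computation, and the same use of \Cref{lem:estimate-floor-minus-floor} to place $N_u(x)$ in $(y-1,y+1]$. The only difference is the sharp case. You characterize equality in the floor estimate ($\eta_u(x)$ an integer $\equiv -1 \pmod p$) and combine it with $\gcd(p-1,u)=p-1$, whereas the paper rules out $u\neq 0$ via $\gcd(p-1,u)<p-1$ and then recomputes $N_0(x)$ explicitly from $\eta_0(x)=y+\frac{y+1}{p-1}$; your route is slightly more direct.
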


\begin{proof}
  By \Cref{lem:Nu} and \Cref{lem:estimate-floor-minus-floor}, for some  $\varepsilon \in [-1+\frac1p, 1-\frac1p]$, we have
  \begin{align*}
    N_u(x)
    & =
    u
    + \leftl(1-\frac1p\rightr) \eta_u(x)
    + \varepsilon
    \\ & =
    u
    +
    \frac{p-1}p
    \leftl(
      \frac{
        x-p(p-1)+\gcd(p-1, \, u)
      }{(p-1)^2}
      -
      \frac{pu}{p-1}
    \rightr)
    + \varepsilon
    \\ & =
    \frac{x}{p(p-1)}
    -1
    +
    \underbrace{
      \frac{\gcd(p-1, \, u)}{p(p-1)}
    }_{\in (0, \frac1p]}
    +
    \underbrace{
      \varepsilon
    }_{\in [-1+\frac1p,1-\frac1p]}
    =
    y
    + \underbrace{\varepsilon'}_{\in (-1,1]}.
  \end{align*}
  Since~$N_u(x)$ is an integer, this implies that~$N_u(x) = \floor y + 1$ (if $0 < \varepsilon' \leq 1$) or that $N_u(x) = \floor y$ (if $-1 < \varepsilon' \leq 0$).
  If~$y$ is not an integer, then $\floor y + 1 = \ceil y$ and we are done, so assume that $p(p-1) \mid x$.
  If $u \neq 0$, then $\frac{\gcd(p-1, \, u)}{p(p-1)} < \frac 1p$ so $\varepsilon' < 1$ and~$N_u(x) = y+1$ is not possible so $N_u(x)=y$ and we are done, so assume that $u=0$.
  We have $x = p (p-1)(y+1)$ and thus
  \[
    \eta_u(x)
    =
    \frac{p(p-1)(y+1)-p(p-1)+p-1}{(p-1)^2}
    =
    \frac{p(y+1)}{p-1} - 1
    =
    y + \frac{y+1}{p-1}
  \]
  so
  \[
    N_u(x)
    =
    y
    +
    \floor{\frac{y+1}{p-1}}
    -
    \floor{\frac{y+1}{p-1} - \frac1p}
    =
    \begin{cases}
      y + 1 & \text{if there is an integer in } (\frac{y+1}{p-1}-\frac1p, \frac{y+1}{p-1}] \\
      y &  \text{otherwise.}
    \end{cases}
  \]
  If there is an integer in the interval~$(\frac{y+1}{p-1}-\frac1p, \frac{y+1}{p-1}]$, there is in particular an integer in the interval~$(\frac y{p-1}, \frac{y+1}{p-1}]$, which means that $p-1 \mid y + 1$, i.e., $p(p-1)^2 \mid x$.
  Conversely, if $p(p-1)^2 \mid x$, then the interval $(\frac{y+1}{p-1}-\frac1p, \frac{y+1}{p-1}]$ contains the integer $\frac{y+1}{p-1} = \frac{x}{p(p-1)^2}$, so $N_u(x) = y + 1$.
\end{proof}

\Cref{thm:local-total-count,cor:values-nux} imply the following: if~$p(p-1)^2 \mid x > 0$, then
\[
  \frac{1}{p(p-1)}
  \cardsuchthat{
    \rho \in \Hom(\Gamma_K, C_p \rtimes C_{p-1})
  }{
    \delta(\rho) \leq x
  }
  =
  \underbrace{
    \card\kappa^{\frac{x}{p(p-1)}}
  }_{u = 0}
  +
  \underbrace{
    (p-2) \card\kappa^{\frac{x}{p(p-1)}-1}
  }_{u\neq0}
\]
and if~$p(p-1) \mid x$ but~$p(p-1)^2 \nmid x$ then
\[
  \frac{1}{p(p-1)}
  \cardsuchthat{
    \rho \in \Hom(\Gamma_K, C_p \rtimes C_{p-1})
  }{
    \delta(\rho) \leq x
  }
  =
  (p-1) \card\kappa^{\frac{x}{p(p-1)}-1}.
\]
In general, whenever~$p(p-1)^2 \nmid x \geq p(p-1)$, we have
\[
  (p-1) \card\kappa^{\floor{\frac{x}{p(p-1)}-1}}
  \leq
  \frac{1}{p(p-1)}
  \cardsuchthat{
    \rho \in \Hom(\Gamma_K, C_p \rtimes C_{p-1})
  }{
    \delta(\rho) \leq x
  }
  \leq (p-1) \card\kappa^{\ceil{\frac{x}{p(p-1)}-1}}
\]
and similar estimates are obtained for surjective homomorphisms.
For dihedral extensions, the term~$\frac{x}{p(p-1)}$ in the exponents is to be replaced by~$\frac x{2p}$.

\subsubsection{Visualizations}

We are going to draw visualizations of ~$N_u(x)$ for $0 \leq u \leq p-2$ (rows) and $p(p-1)^2 \leq x < 2p(p-1)^2$ (columns), i.e., over an entire period.
The topmost row corresponds to unramified Kummer extensions ($u=0$).
In these visualizations, darker shades of gray will indicate smaller values (the black on the left is~$\floor{\frac{p(p-1)^2}{p-1}-1} = p-2$, the white on the right is~$(p-2)+(p-1) = 2p-3$).

In each individual row (for a fixed~$u$), the value of~$N_u(x)$ increases~$p-1$ times over a period: in general, the consecutive ``jumps'' are evenly spaced (at distance~$(p-1)^2$), corresponding to the jumps of~$\floor{\eta_u(x)}$ when~$\eta_u(x)$ is an integer, except for two consecutive jumps that are twice as far (at distance~$2(p-1)^2$) because of the negating effect of the term~$-\floor{\eta_u(x)/p}$ when~$\eta_u(x)$ is an integer multiple of~$p$.

\begin{figure}[H]
  \setlength{\fboxsep}{0pt}%
  \fbox{\includegraphics[scale=0.07]{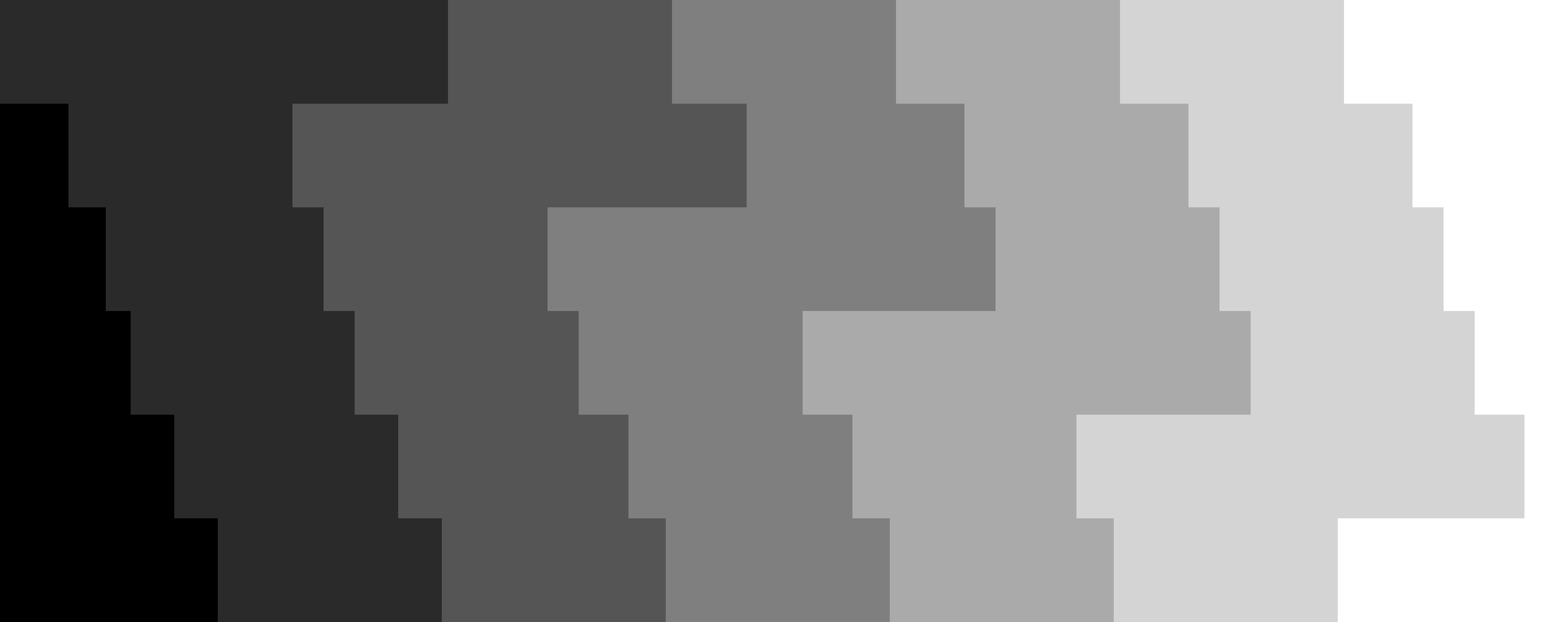}}
  \caption{
    A visualization of~$N_u(x)$ in the case~$p=7$.
    The values range from~$5$ to~$11$.
  }
  \label{fig:Npux7}
\end{figure}

We can visualize the claims of \Cref{cor:values-nux} on \Cref{fig:Npux7}.
Every column contains at most two shades of gray.
In the leftmost column (multiples of~$p(p-1)^2$), the first row ($u=0$) is strictly brighter than the others.
We see $6=p-1$ regularly-spaced vertical stripes in which a single shade is represented across an entire column (multiples of~$p(p-1)$ that are not multiples of~$p(p-1)^2$).

\begin{figure}[H]
  \setlength{\fboxsep}{0pt}%
  \fbox{\includegraphics[scale=0.07]{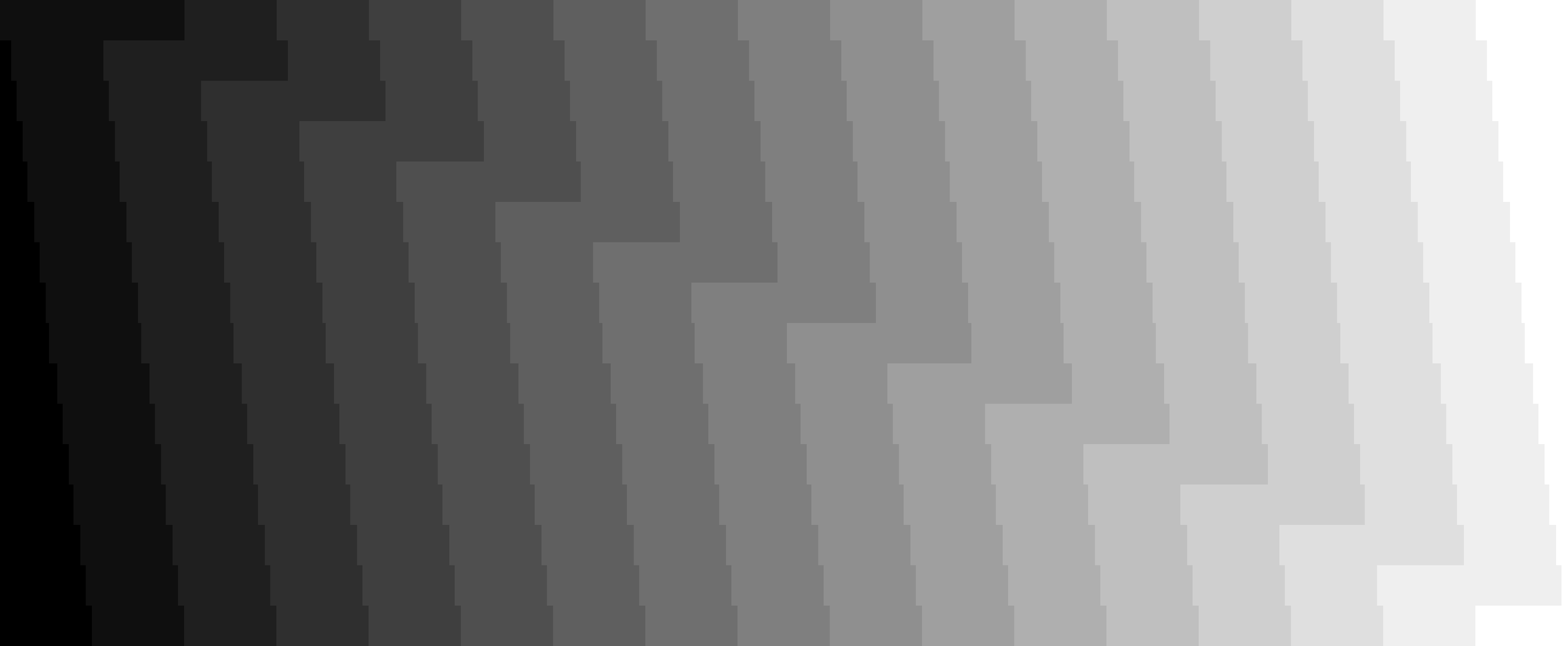}}
  \caption{
    A similar visualization for~$p=17$.
    The values range between~$15$ and~$31$.
  }
\end{figure}

\begin{example}[$\fS_3$-extensions in characteristic~$3$]
  \label{ex:s3-char3}
  Assume that~$p=3$.
  Then, from the corresponding visualization
  \begin{center}
    \includegraphics[scale=0.07]{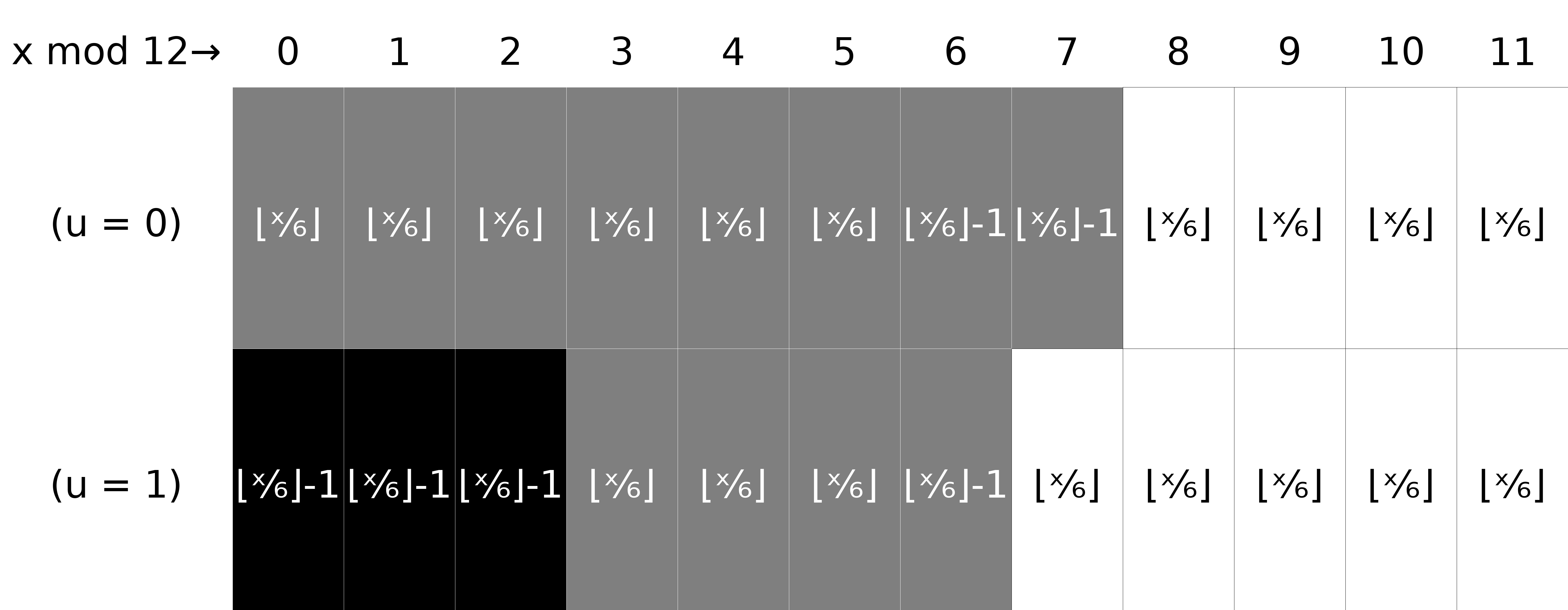}
  \end{center}
  and from \Cref{thm:local-total-count}, we deduce the following complete answer for the symmetric group~$\fS_3 = C_3 \rtimes C_2$:
  \[
    \frac{1}{6}
    \cardsuchthat{
      \rho \in \Hom(\Gamma_K, \fS_3)
    }{
      \delta(\rho) \leq x
    }
    =
    \begin{cases}
      1
      & \text{if } 0 \leq x \leq 2
      \\
      2
      & \text{if } 3 \leq x \leq 6
      \\
      \card\kappa^{\floor{\frac x6}} + 
      \card\kappa^{\floor{\frac x6}-1}
      & \text{if }
      x \geq 6 \text{ and }
      (x \bmod 12) \in \{0,1,2,7\}
      \\
      2\card\kappa^{\floor{\frac x6}}
      & \text{if }
      x \geq 6 \text{ and }
      (x \bmod 12) \in \{3,4,5,8,9,10,11\}
      \\
      2 \card\kappa^{\frac x6 - 1}
      & \text{if }
      x \geq 6 \text{ and } 
      x \equiv 6 \bmod 12.
    \end{cases}
  \]
  In terms of exactly prescribed discriminants (instead of upper bounds), this becomes
  \[
    \frac{1}{6}
    \cardsuchthat{
      \rho \in \Hom(\Gamma_K, \fS_3)
    }{
      \delta(\rho) = x
    }
    =
    \begin{cases}
      1
      & \text{if } x = 0 \text{ or } x = 3
      \\
      0
      & \text{if } (x \bmod 12) \in \{1,2,4,5,6,9,10,11\}
      \\
      \card\kappa^{\floor{\frac x6}} - 
      \card\kappa^{\floor{\frac x6}-1}
      & \text{if }
      x \geq 6 \text{ and }
      (x \bmod 12) \in \{0,3,7,8\}.
    \end{cases}
  \]
  The congruence classes $0,3,7,8 \bmod 12$ correspond to the indices of the columns in which some cell is brighter than the cell on its left.
  (When it happens that several cells, say~$r$, get brighter in the same column, then for the discriminant exponents~$x$ in the corresponding congruence class modulo~$p(p-1)^2$ we obtain $r\Bigl(\card\kappa^{\floor{\frac x{p(p-1)}}} - \card\kappa^{\floor{\frac x{p(p-1)}}-1}\Bigr)$ instead.)

  We deduce from this an exact expression for the generating function for the discriminant exponents of $\fS_3$-extensions of $K = \kappa\ppar\pi$:
  \[
    \frac16
    \sum_{\rho \in \Hom(\Gamma_K, \fS_3)}
      X^{\delta(\rho)}
    =
    1
    +
    X^3
    +
    \frac
    {
      (\card\kappa-1)
      (X^7 + X^8 + \card\kappa X^{12} + \card\kappa X^{15})
    }{
      1-X^{12}\card\kappa^2
    }.
  \]
\end{example}

\renewcommand{\addcontentsline}[3]{}
\emergencystretch=1em
\bibliographystyle{alphaurl}
\bibliography{refs.bib}

\end{document}